\documentclass[11pt]{amsart}
\usepackage[a4paper,margin=28mm]{geometry}
\usepackage[T1]{fontenc}
\usepackage{lmodern}
\usepackage{microtype}
\usepackage{amsmath,amssymb,amsthm,mathtools}
\usepackage{enumitem}
\usepackage{xcolor}
\usepackage{hyperref}
\hypersetup{colorlinks=true,linkcolor=blue!50!black,citecolor=blue!50!black,urlcolor=blue!50!black}
\allowdisplaybreaks
\numberwithin{equation}{section}
\newtheorem{theorem}{Theorem}[section]
\newtheorem{proposition}[theorem]{Proposition}
\newtheorem{lemma}[theorem]{Lemma}
\newtheorem{corollary}[theorem]{Corollary}
\theoremstyle{definition}

\newtheorem{question}[theorem]{Question}
\theoremstyle{remark}
\newtheorem{remark}[theorem]{Remark}
\newcommand{\R}{\mathbb R}
\newcommand{\C}{\mathbb C}
\newcommand{\Z}{\mathbb Z}
\newcommand{\N}{\mathbb N}
\newcommand{\T}{\mathbb T}
\newcommand{\1}{\mathbf 1}

\newcommand{\diag}{\operatorname{diag}}

\newcommand{\dd}{\,\mathrm d}
\newcommand{\norm}[1]{\left\lVert #1\right\rVert}

\newcommand{\inner}[2]{\left\langle #1,#2\right\rangle}

\begin{document}
\title[Calder\'on commutators on quantum spaces]{Calder\'on Commutators on Quantum Euclidean Spaces and Quantum Tori}
\author{Deyu Chen}
\address{Deyu Chen, Institute for Advanced Study in Mathematics,
Harbin Institute of Technology, Harbin 150001, China}
\email{1201200317@stu.hit.edu.cn}

\author{Guixiang Hong}
\address{Guixiang Hong, Institute for Advanced Study in Mathematics,
Harbin Institute of Technology, Harbin 150001, China}
\email{gxhong@hit.edu.cn}

\date{}
\date{\today}
\subjclass[2020]{Primary 46L52, 42B20; Secondary 47B47, 42B35}
\keywords{Calder\'on commutator, quantum Euclidean space, quantum torus,
operator-valued singular integral, BMO, $T1$ theorem}

\begin{abstract}
In the classical setting, Calder\'on's first commutator theorem states
that Lipschitz regularity of the symbol characterizes the
$L_2$-boundedness of the first commutators. We show that the formally analogous assertion fails on every quantum
Euclidean space $\R^d_\theta$ with $\theta\neq 0$.
The same simultaneous failure occurs on $\T_\theta^d$ whenever
$\theta$ has an irrational entry. For rational $\theta$, we obtain the optimal logarithmic bound for the corresponding
Calder\'on first commutator. We also discuss the connection between Calder\'on's first commutator
and the $T1$ theorem, and construct an operator-valued counterexample to Calder\'on's
first commutator theorem, which shows that there is no direct analogue
of the classical $T1$ theorem in the operator-valued setting.  

The counterexamples and the optimality argument rely on the construction of sequences of matrices $\mathsf D_n$ and $\mathsf A_n$ such that $\|[\mathsf D_n,\mathsf A_n]\|$ is uniformly bounded, whereas $\|[|\mathsf D_n|,\mathsf A_n]\|$ tends to infinity. This phenomenon reflects the failure of the
absolute value function to be operator Lipschitz with
respect to the operator norm. The logarithmic estimate,
on the other hand, is obtained by combining matrix-valued
$T1$ theory with a transference argument.
\end{abstract}

\maketitle
\enlargethispage{3pt}

\section{Introduction}\label{sec:introduction}
\subsection{Background}
\subsubsection{Commutator theory in classical and quantum settings}
Calder\'on's first commutator theorem is a basic endpoint regularity result
in classical harmonic analysis. It asserts that if $b$ is a scalar Lipschitz
function on $\R^d$, then
\begin{equation}\label{eq:classical-estimate}
 \|[b,\partial_jR_j]\|_{B(L_2(\R^d))}
 \leq C_d\|\nabla b\|_\infty,
\end{equation}
where $R_j$ denotes the $j$th Riesz transform.
This estimate originates in Calder\'on's work \cite{Calderon1965}; see also
\cite{CoifmanMeyer1975,Muscalu2014}. The converse was proved by Murray in
$d=1$ \cite{Murray1985} and later, in general $d\ge 1$, by Chen, Ding,
and Hong \cite{ChenDingHong2016}. Consequently, in the classical
Euclidean setting, Lipschitz regularity characterizes the boundedness
of Calder\'on's first commutator.
This should be contrasted with the commutator with the zero-order Riesz
transform $R_j$. By the classical Coifman--Rochberg--Weiss theory \cite{CoifmanRochbergWeiss1976},
\begin{equation}\label{eq:classical-estimate2}
 \|[b,R_j]\|_{B(L_2(\R^d))}
 \leq C_d\|b\|_{\operatorname{BMO}(\R^d)}.
\end{equation}
Thus the two commutators lie in different regularity regimes.

In recent years, the development of noncommutative harmonic analysis has
led to growing interest in harmonic analysis on quantum Euclidean spaces
and quantum tori. These algebras provide natural models in which
the Fourier variables and differential calculus remain classical, while
multiplication is deformed. A substantial harmonic analysis theory has
been developed in these quantum settings, including Fourier multipliers,
Sobolev and Besov spaces, and singular integral operators; see, for instance,
\cite{ChenXuYin2013,XiongXuYin2018,GJP,GJM,HLW2023,hlw25,ChenHong2026}.
In particular, commutator estimates and quantum differentiability were
studied by McDonald, Sukochev, and Xiong, who established a quantum
differentiability theorem on quantum tori
\cite{MSX2019}. For $d\geq2$ and a bounded symbol
$b\in L_\infty(\T_\theta^d)$, their result gives the characterization
\begin{equation}\label{eq:quantum-torus-Schatten}
 [b,R_j]\in S_{d,\infty}(L_2(\T_\theta^d))
 \quad\text{for all }1\leq j\leq d
 \quad\Longleftrightarrow\quad
 b\in\dot W^{1,d}(\T_\theta^d),
\end{equation}
where $\dot W^{1,d}(\T_\theta^d)$ denotes the homogeneous Sobolev
space on $\T_\theta^d$ and $S_{d,\infty}$ denotes the weak
Schatten-$d$ class. They \cite{MSX2020,MSX2023} later obtained an analogous
characterization on quantum Euclidean spaces $\mathbb R_\theta^d$; see also
\cite{SukochevXiongZanin2023,Tian2025} for related developments.

In contrast, fewer results are known about Calder\'on's first commutator
in the quantum setting. Compared with the Riesz commutator $[b,R_j]$,
the operator $[b,\partial_jR_j]$ is generally
noncompact, and lies at a more delicate boundedness endpoint, where
stronger regularity of the symbol $b$ is required. The noncommutativity of the underlying multiplication
also prevents a direct application of classical tools such as
Lusin area-function estimates and the $T1$ theorem. Moreover, unlike
Schatten-class problems, the endpoint problem in $B(L_2)$ does not
come with compactness or singular-value information, so spectral
methods and operator-integral techniques that are effective in the
Schatten setting are no longer directly available. These features make
the quantum first Calder\'on commutator substantially more delicate.

\subsubsection{Classical and operator-valued T1 theory}
The $T1$ theorem of David and Journ\'e~\cite{DavidJourne1984}
gives an $L_2$-boundedness criterion for standard Calder\'on--Zygmund
operators. Let $T$ be such an operator. If $T$ satisfies the BMO
testing conditions
\begin{equation}\label{eq:BMO}
 T1,T^*1\in\operatorname{BMO}(\R^d),
\end{equation}
together with the weak boundedness property
\begin{equation}\label{eq:weak bound}
 \bigl|\langle T\phi_Q,\psi_Q\rangle\bigr|
 \lesssim |Q|,
\end{equation}
for every cube $Q\subset\R^d$ and every
$\phi_Q,\psi_Q\in C_c^\infty(\R^d)$ supported in $Q$ and satisfying
the standard normalization
\[
 \|\phi_Q\|_\infty,\|\psi_Q\|_\infty\leq1,\qquad
 \|\nabla\phi_Q\|_\infty,\|\nabla\psi_Q\|_\infty
 \lesssim |Q|^{-1/d},
\]
then $T$ extends to a bounded operator on $L_2(\R^d)$. Standard
Calder\'on--Zygmund theory then yields boundedness on $L_p(\R^d)$ for
every $1<p<\infty$.

Calder\'on's first commutator theorem may also be recovered from the
$T1$ theorem. Indeed, the Lipschitz regularity of the symbol $b$ yields
the standard kernel estimates for the commutator
$[b,\partial_jR_j]$, while its special commutator structure provides
the required BMO testing and weak boundedness conditions. Thus the
$T1$ theorem places Calder\'on's first commutator theorem within a
broader Calder\'on--Zygmund framework.

The most natural operator-valued analogue
of the scalar $T1$ conditions is obtained by replacing the absolute
values in the kernel size and smoothness estimates, as well as in the
weak boundedness property, by operator norms, and by replacing scalar
BMO with an operator-valued BMO space; see, for instance,
\cite{HongLiuMei2020}.
Nazarov, Pisier, Treil, and
Volberg~\cite{NazarovPisierTreilVolberg2002} established the sharp
logarithmic dependence on the matrix dimension for matrix-valued
Carleson embeddings and paraproducts. For matrix-weighted $A_2$ criteria and related paraproduct estimates,
see \cite{Isralowitz2018,PottStoica2017,IsralowitzKwonPott2017}.

The dependence on the matrix dimension cannot, in general, be removed.
Indeed, Mei~\cite{Mei2006} showed that even the $L_\infty$ norm of a
matrix-valued paraproduct symbol does not control the paraproduct norm
uniformly in the matrix dimension. Consequently, the natural operator-valued analogues of the BMO testing \eqref{eq:BMO} and
weak boundedness conditions \eqref{eq:weak bound} from the scalar theory are known to be
insufficient in general for $L_2$-boundedness. This reveals a fundamental obstruction to obtaining an unrestricted
$T1$ theorem by simply replacing scalar BMO with an operator-valued
BMO space, particularly in the infinite-dimensional setting. 
Under additional structural assumptions, however, Hong,
Liu, and Mei~\cite{HongLiuMei2020} established an $L_2$-boundedness
criterion for Calder\'on--Zygmund operators with
$B(\ell_2)$-valued kernels.
Assuming the uniform operator-norm size and regularity estimates in~\cite[(1.5), (1.6)]{HongLiuMei2020} for the operator-valued
Calder\'on--Zygmund kernels, their theorem combines
operator-valued BMO testing with the symmetry condition
\(
 (T1)^*=T^*1,
\)
and therefore requires additional structure beyond the classical
scalar $T1$ theorem.
Operator-valued $T1$ theory on UMD spaces was developed by
Hyt\"onen et al.~\cite{HytonenWeis2006,HanninenHytonen2016,Hytonen2021}.
A key additional assumption in these results is the
Rademacher-boundedness of the relevant kernel families, which is
strictly stronger in general than uniform boundedness in operator norm in \cite{HongLiuMei2020}.

Gonz\'alez-P\'erez, Junge, and Parcet~\cite{GJP} established a
Calder\'on--Zygmund theory on quantum Euclidean spaces. This naturally
leads to the question of a corresponding quantum $T1$ theorem.
The quantum problem presents an additional difficulty beyond the
operator-valued setting: the latter retains the classical spatial
variables and local bump-function tests, whereas a suitable intrinsic
formulation of the weak boundedness property on quantum Euclidean
spaces is not evident. From the perspective of $T1$ theory, it is
therefore of particular interest to determine whether Calder\'on's
first commutator theorem remains valid in the %operator-valued and
quantum settings. Indeed, it is listed as a conjecture in the survey \cite[Conjecture 8.16]{HWX}.

\subsection{Main results}

The main purpose of this paper is to determine whether the Lipschitz
characterization of the boundedness of Calder\'on's first commutator
persists in the quantum setting.
We first introduce the notation needed to state the main results. Let
$d\geq2$ and let $\theta$ be a real antisymmetric $d\times d$ matrix.
We write $\R^d_\theta$ for the quantum Euclidean space associated with a
real antisymmetric matrix $\theta$, namely, the von Neumann algebra
generated by the Weyl unitaries $\{\lambda_\theta(\xi):\xi\in\R^d\}$
satisfying
\begin{equation}\label{eq:intro-weyl}
 \lambda_\theta(\xi)\lambda_\theta(\eta)
 =e^{\mathrm i\xi^{\mathsf T}\theta\eta/2}
   \lambda_\theta(\xi+\eta),\qquad \xi,\eta\in\R^d.
\end{equation}
Its canonical trace is determined by
\[
\tau_\theta(\lambda_\theta(f))=f(0),
\qquad
\lambda_\theta(f)=\int_{\R^d} f(\xi)\lambda_\theta(\xi)\,d\xi,
\]
for $f\in\mathcal S(\R^d)$. This trace gives rise to the
associated noncommutative $L_p$-spaces $L_p(\R_\theta^d)$,
$1\le p<\infty$, while $L_\infty(\R_\theta^d)$ is identified with
$\R_\theta^d$ itself. The derivations on $\mathbb R_\theta^d$ are determined by
$\partial_k\lambda_\theta(\xi)=\mathrm i\xi_k\lambda_\theta(\xi)$.
We define the Lipschitz class on $\mathbb R_\theta^d$ as the Banach space
\begin{equation}\label{eq:W1infty-definition}
 W^{1,\infty}(\mathbb R_\theta^d)
 :=\{b\in L_\infty(\mathbb R_\theta^d):\partial_kb\in L_\infty(\mathbb R_\theta^d),\ 1\leq k\leq d\}
\end{equation}
equipped with the norm $$ \|b\|_{W^{1,\infty}}:=\|b\|_\infty+\sum_{k=1}^d\|\partial_kb\|_\infty.$$
Derivatives of bounded elements are understood distributionally. Put
\begin{equation}\label{eq:intro-multipliers}
 L:=-\sum_{k=1}^d\partial_k^2,\qquad
 R_j:=\partial_jL^{-1/2},\qquad
 {T_j:=-\partial_jR_j.}
\end{equation}
 On quantum Euclidean
space, $T_j$ has Fourier symbol
\begin{equation}\label{eq:classical-symbol}
 m_j(\xi)=\frac{\xi_j^2}{|\xi|}\quad(\xi\ne0),\qquad m_j(0)=0.
\end{equation}

\begin{theorem}\label{thm:euclidean}
Let $d\geq2$ and $j\in\{1,\ldots,d\}$. The following assertions are equivalent:
\begin{enumerate}
\item[{\rm (i)}] $\theta=0$;
\item[{\rm (ii)}] for every $b\in W^{1,\infty}(\R^d_\theta)$, the commutator
$[b,\partial_jR_j]$ extends to an element of $B(L_2(\R^d_\theta))$.
\end{enumerate}
Moreover, if $\theta\ne0$, there is a self-adjoint
$b\in W^{1,\infty}(\R^d_\theta)$ for which
$[b,\partial_kR_k]$ has no bounded extension for every $1\le k\le d$.
\end{theorem}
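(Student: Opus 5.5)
(i)$\Rightarrow$(ii) is Calder\'on's first commutator theorem \eqref{eq:classical-estimate} on $\R^d$, since for $\theta=0$ we have $\R^d_0=L_\infty(\R^d)$ and $W^{1,\infty}$ is the scalar Lipschitz class. The content is therefore (ii)$\Rightarrow$(i) together with the stronger ``moreover'' clause. We prove the clause directly for $\theta\neq0$.

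\textbf{Reduction to matrices.} Since $\theta\ne0$ is antisymmetric, an orthogonal change of variables puts it in block form with a nonzero $2\times2$ block $\begin{pmatrix}0&\vartheta\\-\vartheta&0\end{pmatrix}$ in coordinates $(x_1,x_2)$. Orthogonal changes do not preserve the individual $T_k$, but they preserve $L$ and $W^{1,\infty}$. So we first work in adapted coordinates and only at the end choose $b$ so that the obstruction appears for every original index $k$. In the $2\times2$ block, $\R^2_\vartheta$ is (up to scaling) $B(L_2(\R))$, generated by $e^{\mathrm i s\,P}$ and $e^{\mathrm i t\,Q}$ with $[Q,P]=\mathrm i\vartheta$. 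The derivations become inner: $\partial_1 b=\mathrm i\vartheta^{-1}[Q',b]$-type commutators with position/momentum. Moreover $L_2(\R^2_\vartheta)\cong S_2(L_2(\R))$ with left multiplication by $b$. The Fourier multiplier $\partial_k^2$ acts as $-\vartheta^{-2}[Q,[Q,\cdot]]$ on Hilbert--Schmidt operators. Thus $L=\vartheta^{-2}(\mathrm{ad}_Q^2+\mathrm{ad}_P^2)$, and $L^{1/2}$ is the ``absolute value'' of the pair of commuting derivations $(\mathrm{ad}_Q,\mathrm{ad}_P)$.

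\textbf{Key estimate.} We seek $b_n$ with $\sup_n\|b_n\|_{W^{1,\infty}}<\infty$ but $\|[b_n,T_k]\|_{B(L_2)}\to\infty$. By the uniform boundedness principle, or by a direct sum $b=\sum_n \epsilon_n \gamma_n(b_n)$ with translates/modulations placed in almost orthogonal regions and $\epsilon_n\to0$ slowly, this yields a single self-adjoint $b$ with no bounded extension. Writing $T_k=\partial_k^2L^{-1/2}$, we get $[b,T_k]=[b,\partial_k^2]L^{-1/2}+\partial_k^2[b,L^{-1/2}]$. The first term is controlled by $\|\partial_k b\|_\infty$ via $\partial_kL^{-1/2}$ bounded, up to terms involving $\partial_k^2 b$ composed with $L^{-1/2}$. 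Choosing $b_n$ with $\partial^2b_n$ controlled at the relevant frequency scale, the problem reduces to unboundedness of $[b_n,L^{1/2}]$-type commutators restricted to a spectral window.

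\textbf{Matrix model.} We restrict to finite-dimensional invariant pieces, e.g. Hermite/Landau-level truncations. Left multiplication by $b$ together with the derivation structure reproduces the matrix phenomenon from the abstract: matrices $\mathsf D_n$ (a self-adjoint model of a first-order derivation built from $\partial_1,\partial_2$, e.g. via a Clifford/Dirac combination) and $\mathsf A_n$ (compressions of $b_n$) with $\|[\mathsf D_n,\mathsf A_n]\|\lesssim1$ but $\|[|\mathsf D_n|,\mathsf A_n]\|\gtrsim\log n$. The natural choice is the classical example showing $|x|$ is not operator Lipschitz, namely triangular truncation: $\mathsf A_n$ is a Hilbert-matrix-type kernel $1/(i-j)$ realized through $b_n$. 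For the symmetric clause (every $k$), we take $b_n$ symmetric under coordinate permutations, or sum the constructions for each direction, so that all $T_k$ see the blow-up. The main obstacle is this realization step: making the noncommutative Fourier/Weyl structure of $\R^d_\theta$ implement a Schur-multiplier model in which $\sqrt{\xi_1^2+\xi_2^2}$, or $\xi_k^2/|\xi|$, acts like $|\mathsf D|$ while $b$ has bounded derivatives. We would first try the Moyal matrix basis $f_{mn}$ of $\R^2_\vartheta$, in which the harmonic oscillator is diagonal and $\partial_{\bar z}$ acts as a weighted shift.
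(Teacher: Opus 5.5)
\textbf{Verdict: there is a genuine gap.} The classical direction is fine. So is the passage from a bounded sequence $b_n$ in $W^{1,\infty}$ with $\|[b_n,T_k]\|\to\infty$ to a single self-adjoint $b$ (closed graph plus a Baire argument over the finitely many $k$). What is missing is the step that carries the theorem, and you acknowledge this yourself. You never produce $b_n$ together with test vectors on which the compressions of $\partial_k$ and $T_k$ reproduce $\mathsf D_n$ and $|\mathsf D_n|$.

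The routes you sketch toward that step do not work as stated.
\begin{itemize}
\item Splitting $[b,T_k]$ through $[b,\partial_k^2]L^{-1/2}$ creates the term $(\partial_k^2b)L^{-1/2}$. No Lipschitz bound controls it: $\partial_k^2b$ need not be bounded, and $L^{-1/2}$ is unbounded at low frequencies.
\item Nothing reduces $\partial_k^2[b,L^{-1/2}]$ to ``$[b,L^{1/2}]$ on a spectral window'', because left multiplication by $b$ does not preserve spectral windows of $L$.
\item Hermite or Landau truncations cannot give finite-dimensional invariant pieces. Under the Fourier--Wigner transform, $T_k$ is multiplication by $m_k$, which has no eigenvectors, so no nonzero finite-dimensional subspace is $T_k$-invariant.
\item A Dirac or Clifford $\mathsf D$ would at best address $L^{1/2}=\sum_kT_k$, not each $T_k$ separately.
\item The matrix pair has to be specific, for instance spectrum $\pm2^r$ and entries $1/((2^r+2^s)(r-s))$. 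Then $[\mathsf D,\mathsf A]$ is a compression of the bounded Hilbert matrix, while $[|\mathsf D|,\mathsf A]$ has entries $\asymp|r-s|^{-1}$ of one sign. ``Triangular truncation'' only gestures at this.
\end{itemize}

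\textbf{The missing idea.} You need neither $L^{1/2}$ nor a Dirac operator. Since $\bigl|m_2(s,t)-|t|\bigr|\le|s|$, the operator $T_2$ acts like $|\partial_2|=|\mathrm{ad}_Q|$ on vectors whose Fourier--Wigner transform has bounded $|s|$. The paper exploits this as follows.
\begin{itemize}
\item Place disjointly supported packets $\varphi_{q_{n,k}}$ at positions $q_{n,k}=z_n+L_n\lambda_{n,k}$, where the $\lambda_{n,k}$ are the eigenvalues of $\mathsf D_n$. Set $\mathfrak b_n=V_nL_n^{-1}\mathsf A_nV_n^*$ with $V_ne_k=\varphi_{q_{n,k}}$.
\item Then $[Q,\mathfrak b_n]=V_n[\mathsf D_n,\mathsf A_n]V_n^*+O(\|\mathsf A_n\|/L_n)$ and $[P,\mathfrak b_n]=O(\|\mathsf A_n\|/L_n)$, so $\sum_n\mathfrak b_n$ is Lipschitz.
\item The rank-one vectors $\varphi_{q_{n,k}}\otimes\varphi_{z_n}^*$ are exactly intertwined by left multiplication. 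Their transforms are concentrated at $t\approx L_n\lambda_{n,k}$, with $|s|$ of bounded first moment.
\item Hence $J_n^*T_2J_n=L_n|\mathsf D_n|+O(1)$ and $\|J_n^*[\mathfrak b,T_2]J_n\|\ge n-O(2^{-n})$.
\end{itemize}

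\textbf{The clause ``every $k$''.} Here the orthogonal block reduction is the wrong tool. If $b$ has the form (Moyal element)$\otimes1$, the obstruction sits on frequencies in $\operatorname{ran}\theta=(\ker\theta)^\perp$. For example, when $d=3$ and $\theta$ has a single $2\times2$ block, these frequencies lie in $\{\xi_3=0\}$, where $m_3$ is negligible. Your suggestion of symmetrizing under coordinate permutations is also unavailable, since these are not automorphisms of $\R^d_\theta$. The paper instead uses a non-orthogonal symplectic basis whose direction $w_1\notin\ker\theta$ has all coordinates nonzero. Then $m_j(\rho w_1)=(w_1)_j^2|\rho|/|w_1|$ for every $j$, so every $T_j$ sees the obstruction.
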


We also establish analogous results on quantum tori.
The quantum torus $\T_\theta^d$ is
the von Neumann algebra generated by unitaries $U_1,\ldots,U_d$ satisfying
\begin{equation}\label{eq:intro-torus}
 U_kU_l=e^{2\pi\mathrm i\theta_{kl}}U_lU_k.
\end{equation}
Its canonical trace is determined by $$\tau(U^n)=0,\qquad 0\ne n\in \mathbb Z^d,\qquad \tau(1)=1,$$ where $U^n:=U_1^{n_1}\cdots U_d^{n_d}$. This trace gives rise
to the associated noncommutative $L_p$-spaces $L_p(\T_\theta^d)$,
$1\le p<\infty$, while $L_\infty(\T_\theta^d)$ is identified with
$\T_\theta^d$ itself.
The derivations on $\mathbb T_\theta^d$ are determined by $\partial_kU^n=\mathrm i n_kU^n$, so that $T_j:=-\partial_jR_j$ has Fourier symbol $$m_j(n)=\frac{n_j^2}{|n|}\quad(n\ne0),\qquad m_j(0)=0.$$ 
We define the Lipschitz class on $\mathbb T_\theta^d$ as the Banach space
\begin{equation}\label{eq:W1infty-definition2}
 W^{1,\infty}(\mathbb T_\theta^d)
 :=\{b\in L_\infty(\mathbb T_\theta^d):\partial_kb\in L_\infty(\mathbb T_\theta^d),\ 1\leq k\leq d\}
\end{equation}
equipped with the norm $$ \|b\|_{W^{1,\infty}}:=\|b\|_\infty+\sum_{k=1}^d\|\partial_kb\|_\infty.$$

\begin{theorem}\label{thm:torus}
Let $d\geq2$ and suppose that $\theta$ has at least one irrational entry.
There exists a self-adjoint $b\in W^{1,\infty}(\T_\theta^d)$ such that
$[b,\partial_jR_j]$ has no bounded extension to $L_2( \T_\theta^d)$
for every $1\leq j\leq d$.
\end{theorem}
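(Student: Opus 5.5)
The plan is to reduce the failure to a matrix phenomenon, in the way the abstract indicates. We will build matrices $\mathsf D_n,\mathsf A_n$ with $\sup_n\|[\mathsf D_n,\mathsf A_n]\|<\infty$ but $\|[|\mathsf D_n|,\mathsf A_n]\|\to\infty$, and then realize these matrices inside $\T_\theta^d$. Irrationality of some $\theta_{kl}$ is used only to guarantee that $\T_\theta^d$ contains approximate copies of every matrix algebra in the right "Fourier-compatible" position. Concretely, assume without loss of generality that $\theta_{12}$ is irrational. Then the rotation algebra generated by $U_1,U_2$ is a subalgebra, and the pair $(\partial_1,\partial_2)$ acts on it.

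The first step is to exhibit a transference mechanism. Restrict attention to the Hilbert subspace of $L_2(\T_\theta^d)$ spanned by $U^n$ with $n$ in a suitable finite box. On this subspace, left multiplication by monomials $U^m$ acts as a weighted shift with phases $e^{2\pi i\langle\cdot\rangle}$. The symbol $m_j(n)=n_j^2/|n|$ restricted to a lattice line $n=(N,k,0,\dots)$ with $|k|\ll N$ behaves like $|N|+k^2/(2N)$-type, i.e.\ essentially like an absolute value of a first-order operator. Choosing the symbol $b=\sum_m c_m U^m$ with finitely many frequencies, $[b,T_j]$ becomes $\sum_m c_m U^m(m_j(\cdot+m)-m_j(\cdot))$. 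By Kronecker/Weyl equidistribution for irrational $\theta_{12}$, the phases $e^{2\pi i\theta_{12}n}$ approximate any prescribed diagonal unitary (e.g.\ clock matrix) along a subsequence of $n$. Thus, on suitable blocks, $\partial_k$ and the multiplication by $b$ model $\mathsf D_n$ and $\mathsf A_n$, while $T_j$ models $|\mathsf D_n|$ up to bounded errors.

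Next we glue the blocks. Take $b=\sum_n \varepsilon_n b_n$, where $b_n$ lives on frequency blocks that are far apart and disjoint. We arrange that $\|b\|_\infty+\sum\|\partial_kb\|_\infty<\infty$; for instance, $\|\partial_k b_n\|_\infty\lesssim\|[\mathsf D_n,\mathsf A_n]\|$, which is uniformly bounded, and we use lacunary weights $\varepsilon_n\to0$ slowly. At the same time $\|[b_n,T_j]\|\gtrsim\varepsilon_n\|[|\mathsf D_n|,\mathsf A_n]\|\to\infty$. A norm estimate from below follows by testing on vectors supported in the $n$-th block, because frequency separation makes cross terms vanish. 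Self-adjointness is obtained by replacing $b$ with its real part (symmetrizing $c_{-m}=\overline{c_m}$). To get all $j$ simultaneously, we either use a direction symmetric in all coordinates (frequencies along $(N,\dots,N)+$ small perturbations, where every $m_j$ looks like $N/\sqrt d$ plus an absolute-value correction), or sum blocks adapted to each $j$.

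The main obstacle is the matrix construction and its faithful embedding. The difficulty is ensuring that $T_j$ on the block is genuinely $|\mathsf D|$ rather than a smooth function of $\mathsf D$. A smooth function would be operator Lipschitz and give bounded commutators. The non-Lipschitz behaviour must come from the sign change near $n_j=0$ or $|n|$ near the origin, so the blocks should straddle the singular hyperplane where the symbol behaves like $|n_1|$. I would first try the known Kato/Davies-type example of operator non-Lipschitzness of $|x|$ with logarithmic growth, $\mathsf D_n$ a discrete Hilbert-type matrix. I would then check, via the irrational rotation phases, that it is implementable with uniformly bounded Lipschitz norm.
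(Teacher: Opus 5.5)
Your skeleton matches the paper's: a matrix pair with $\|[\mathsf D_n,\mathsf A_n]\|$ bounded and $\|[|\mathsf D_n|,\mathsf A_n]\|\to\infty$, the spectrum of $\mathsf D_n$ on both sides of $0$, and blocks glued with decaying weights. But there is a genuine gap. The step you call the main obstacle carries essentially all of the work, and approximating a clock matrix by the phases $e^{2\pi\mathrm i\theta_{12}n}$ along a subsequence does not supply it. That approximation gives only approximate commutation relations. The obstruction forces frequencies of size $\ell\,2^{N_n}$ with $\ell$ huge, and the deformation errors then enter both $\|\partial_kb\|_\infty$ and the compressed commutator multiplied by these growing frequencies. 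Nothing in your sketch controls them.

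Making a Kronecker-type route rigorous is what the paper's closing remark does, and it needs ingredients absent from your plan. These are: the subalgebra generated by $U^k,V^k$, isomorphic to $\T^2_{k^2\vartheta}$, on which derivatives and $T_1$ scale uniformly by $k$; lower semicontinuity of the commutator constant in the deformation parameter; the rational lower bound $\gtrsim\log(1+q)$; and the closed graph theorem.

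The paper's direct proof rests on an \emph{exact} finite-dimensional realization, which is the idea missing from your proposal. In $L_\infty(\T)\rtimes_\vartheta\Z$, irrationality lets you choose an arc $I$ whose rotates $I-2\pi p_r\vartheta$ are pairwise disjoint. With $w_r=\mathsf V^{p_r}h(\mathsf U)$ and $\operatorname{supp}h\subset I$, one gets $w_r^*w_s=\delta_{rs}h(\mathsf U)^2$ exactly. Hence:
\begin{itemize}
\item $\mathfrak b=\mathcal R\Gamma\mathcal R^*$ satisfies $\delta_2\mathfrak b=\mathrm i\mathcal R[\Pi,\Gamma]\mathcal R^*$ exactly;
\item the test vectors $\mathsf V^{p_r}g(\mathsf U)$ with $hg=g$ give $\mathfrak bJ=J\Gamma$ exactly;
\item $J^*T_2J=|\Pi|+F$ with $\|F\|\le\sum_m|m||\widehat g(m)|^2$.
\end{itemize}
Your claim $\|\partial_kb_n\|_\infty\lesssim\|[\mathsf D_n,\mathsf A_n]\|$ also overlooks the transversal derivative. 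It has size about $\|h'\|_\infty\|\Gamma\|$ and is not controlled by the commutator at all. The paper makes it small, together with the error $[\Gamma,F]$, by the scaling $\Pi_n=\ell_nD_{N_n}$, $\Gamma_n=(a_n/\ell_n)E_{N_n}$, with the profiles $h,g$ fixed before $\ell_n\to\infty$.

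For simultaneous unboundedness in every $j$, a symbol in the $U_1,U_2$ subalgebra cannot work when $d\ge3$. On each fibre with fixed $(n_3,\dots,n_d)$, the symbol $m_j$ for $j\ge3$ is a smooth rescaled function of $(n_1,n_2)$, so the sign change you need is absent. Whether you use a symmetric direction or $j$-adapted blocks, you need a rank-two sublattice $\{me_p+kv\}$ with $v_j\ne0$ and irrational parameter $v^{\mathsf T}\theta e_p$. This does not follow from $\theta_{12}\notin\mathbb Q$ for a prescribed $v$ such as $(1,\dots,1)$. The paper takes $\mathsf U=U_p$, $\mathsf V=U^v$ with $v=(1,\dots,1)+Ne_q$, $N\in\{0,1\}$, chosen so that $v^{\mathsf T}\theta e_p\notin\mathbb Q$. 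Then every $v_j\neq0$ and $\bigl|m_j(me_p+kv)-\tfrac{v_j^2}{|v|}|k|\bigr|\le3|m|$, so one tower handles all $j$ at once.

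Three smaller points:
\begin{itemize}
\item Passing to $\operatorname{Re}b$ need not preserve unboundedness, since $[\operatorname{Re}b,T_j]=\frac12([b,T_j]-[b,T_j]^*)$. Take $\mathsf A_n$ self-adjoint from the start, as the tower construction does automatically.
\item $\mathsf D_n$ must be diagonal with integer, well-separated spectrum, because its eigenvalues become the Fourier frequencies $p_{n,r}$.
\item The Hilbert-type matrix enters through $\mathsf A_n$ and $[\mathsf D_n,\mathsf A_n]$, not through $\mathsf D_n$.
\end{itemize}
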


Theorem \ref{thm:torus} does not yield an
analogous conclusion for rational deformation matrices. In contrast,
we prove that Calder\'on's first commutator theorem remains valid for
every rational deformation matrix $\theta$, with optimal logarithmic
dependence on the common denominator.

\begin{theorem}\label{thm:torus-rational}
Let $d\geq2$, $\theta$ be a rational antisymmetric $d\times d$
matrix and let $q\geq1$ be an integer such that $q\theta$ has integer
entries. For every $b\in W^{1,\infty}(\T_\theta^d)$, the commutator
$[b,\partial_jR_j]$ has a bounded extension to $L_2(\T_\theta^d)$, and
\begin{equation}\label{eq:rational-torus-bound}
 \|[b,\partial_jR_j]\|_{B(L_2(\T_\theta^d))}
 \leq C_d\log(1+q)\sum_{k=1}^d\|\partial_kb\|_\infty,
 \qquad 1\leq j\leq d.
\end{equation}
Here $C_d$ depends only on $d$. Moreover, the logarithmic dependence on $q$ is optimal.
\end{theorem}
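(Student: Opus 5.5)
The plan is to reduce the rational quantum torus to a matrix-valued classical torus by transference, and then to apply matrix-valued $T1$ theory, where the $\log(1+q)$ loss is the Nazarov--Pisier--Treil--Volberg dimension loss.

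\emph{Step 1: transference.} Since $q\theta$ is integral, the torus $\T_\theta^d$ embeds trace-preservingly into $L_\infty(\T^d)\otimes M_N$ with $N\le q^{d}$. Concretely, each $U_k$ is realised as $z_k\otimes V_k$, with $V_k$ clock/shift-type matrices in $M_N$ that satisfy the same commutation relations. One could also reduce $N$ to order $q^{\lfloor d/2\rfloor}$ via the normal form of $q\theta$. Since $\log N\lesssim_d\log(1+q)$, the exact value of $N$ is irrelevant. Under this embedding $\pi$, the derivations intertwine: $\pi\circ\partial_k=(\partial_k\otimes\mathrm{id})\circ\pi$. Hence $\pi(\partial_jR_jx)=(\partial_jR_j\otimes \mathrm{id})\pi(x)$, and $L_2(\T_\theta^d)$ is isometric to a closed subspace of $L_2(\T^d;S_2^N)$ invariant under $\partial_jR_j\otimes\mathrm{id}$. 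Left multiplication by $b$ becomes left multiplication by the matrix function $B=\pi(b)$, with $\|\partial_kB\|_{L_\infty(M_N)}=\|\partial_kb\|_\infty$. So it suffices to bound the commutator $[B,T_j\otimes\mathrm{id}]$ on $L_2(\T^d;S_2^N)$ for a Lipschitz $M_N$-valued $B$, acting by left multiplication. Periodizing, the same problem can be posed on $\R^d$ with a periodic $B$, or handled directly on $\T^d$.

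\emph{Step 2: matrix-valued Calder\'on commutator.} The kernel of $[B,T_j]$ is $(B(x)-B(y))K_j(x-y)$ with $|K_j(z)|\lesssim|z|^{-d-1}$. Lipschitz regularity gives operator-norm Calder\'on--Zygmund size and smoothness bounds. Because this operator acts by left multiplication on $S_2^N=\ell_2^N\otimes\ell_2^N$, it is really an $M_N$-valued kernel acting on $L_2(\R^d;\ell_2^N)$. I would then invoke the dyadic representation theorem (Hyt\"onen) for such kernels. It decomposes the operator into dyadic shifts, which are bounded uniformly in $N$ given operator-norm kernel estimates and the weak boundedness property, plus two paraproducts with symbols $T1$ and $T^*1$.

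For the commutator, $T1=[B,T_j]1=BT_j1-T_j B=-T_jB$. Its BMO norm is controlled by $\|\nabla B\|_\infty$: $T_jB=R_j(\partial_jB)$ and Riesz transforms map $L_\infty(M_N)$ to operator BMO, both row and column, uniformly in $N$. The same holds for $T^*1$. Weak boundedness also follows uniformly, from the commutator structure by the scalar argument with norms. By Mei's and NPTV's results, an $M_N$-valued paraproduct with symbol in operator BMO is bounded with constant $\lesssim\log(1+N)$. This gives the bound \eqref{eq:rational-torus-bound}.

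\emph{Step 3: optimality.} I expect this to be the main obstacle. The upper bound only needs known machinery, but the lower bound needs explicit matrices $\mathsf D_n,\mathsf A_n\in M_n$ with $\|[\mathsf D_n,\mathsf A_n]\|\lesssim1$ and $\|[|\mathsf D_n|,\mathsf A_n]\|\gtrsim\log n$. A candidate is the triangular-truncation type example: $\mathsf D_n=\diag(-n,\dots,n)$ and $\mathsf A_n$ a Hilbert-matrix-type $a_{kl}=1/(k-l)$ divided appropriately. Then $[\mathsf D,\mathsf A]$ has entries $(d_k-d_l)a_{kl}$, which are bounded, while the $|\cdot|$ version produces a triangular projection of order $\log n$.

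One then realises these inside $\T_\theta^d$ with $\theta_{12}=1/q$. Choose $b$ built from $U_2$-powers that acts on frequency strips like $\mathsf A_q$, and test functions concentrated at frequencies $n$ with $|n_2|\gg|n_1|$. There the symbol $n_j^2/|n|$ (take $j=1$) behaves like $|n_1|\cdot(n_1/|n|)$; alternatively, use $j=2$ along $n_1$ large, so that $m_j$ locally approximates $|n_j|$ up to a smooth Lipschitz multiplier error. This shows that $[b,\partial_jR_j]$ contains the block $[|\mathsf D_q|,\mathsf A_q]$ up to bounded errors, while $\|\nabla b\|_\infty$ corresponds to $\|[\mathsf D_q,\mathsf A_q]\|\lesssim1$. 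Making the frequency localisation error bounded uniformly in $q$ is the delicate point. I would use the twisted convolution structure, where multiplication by $U^m$ shifts frequencies and introduces phases $e^{2\pi\mathrm i m^{\mathsf T}\theta n}$, to encode the matrix indices as residues mod $q$.
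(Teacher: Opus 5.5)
Your upper bound follows the paper's route: transference $U^n\mapsto e^{\mathrm i n\cdot x}V^n$ into $L_\infty(\T^d)\overline\otimes\mathbb M_{q^d}(\C)$, then the Hänninen--Hytönen dyadic representation with uniformly bounded shifts and $\log(N+1)$ matrix paraproducts. The paper first uses the rotation identity for $\xi_j^2/|\xi|$ to reduce to one-dimensional commutators $[b,|\partial_\omega|]$, and then periodizes by an explicit limit; running $T1$ directly in $\R^d$ is a harmless variant. The genuine gap is in the optimality step, and it starts with your matrices, which do not produce logarithmic growth.

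With $\mathsf D_n=\diag(-n,\dots,n)$ and $a_{kl}=1/(k-l)$, every off-diagonal entry of $[\mathsf D_n,\mathsf A_n]$ equals $1$, so $\|[\mathsf D_n,\mathsf A_n]\|=2n$. Bounded entries do not give a bounded commutator. If instead you take $a_{kl}=(k-l)^{-2}$, so that $[\mathsf D_n,\mathsf A_n]$ is the bounded Hilbert-type matrix, then $[|\mathsf D_n|,\mathsf A_n]$ has entries $\pm(k-l)^{-1}$ on the same-sign blocks. On the mixed-sign blocks its entries are $(a-b)/(a+b)^2$ with $a=k$, $b=-l$, which is bounded by the Schur test, so nothing grows.

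The underlying reason is that the ratio is governed by the Schur multiplier $\frac{|\lambda_k|-|\lambda_l|}{\lambda_k-\lambda_l}$. On the mixed-sign block this is $\frac{a-b}{a+b}=\tanh\frac{\log a-\log b}{2}$, which looks like a triangular truncation only between points at different dyadic scales. An arithmetic progression of length $n$ spans only about $\log n$ scales, so it yields at most $O(\log\log n)$. You need geometric spectrum, as in the paper's $D_N=\diag(\Lambda_N,-\Lambda_N)$ with $\Lambda_N=\diag(2,\dots,2^N)$ and $g_{rs}=\frac{1}{(2^r+2^s)(r-s)}$. Then $[D_N,E_N]$ is built from the Hilbert matrix $(1/(r-s))$ of norm $\le\pi$, while $[|D_N|,E_N]$ has nonnegative entries $\frac{2^{|r-s|}-1}{2^{|r-s|}+1}\frac1{|r-s|}$. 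Testing on the constant vector gives $\kappa_N\sim\log N$.

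Second, you leave the realization in $\T^2_{\theta_q}$ as ``the delicate point''. Your frequency heuristic is also reversed: $n_1^2/|n|\approx|n_1|$ requires $|n_1|\gg|n_2|$, not $|n_2|\gg|n_1|$. The paper's mechanism runs as follows.
\begin{itemize}
\item The image of $\pi:\T^2_{\theta_q}\to L_\infty(\T^2)\overline\otimes\mathbb M_q(\C)$ is exactly the set of matrix functions with $b(x+h)=\beta_h(b(x))$. Here $h$ ranges over the finite group $\mathcal H=((2\pi/q)\Z/2\pi\Z)^2$, acting by inner automorphisms $\beta_h$.
\item Take $S=D_N\oplus0$ and $A=E_N\oplus0$ with $N=\lfloor q/2\rfloor$, so $S$ has integer spectrum.
\item Modulate by $E_M(x)=e^{\mathrm ix_1MS}$ with $q\mid M$, so that $E_M$ is $\mathcal H$-periodic.
\item Localize $E_MAE_M^*$ by a cutoff on a ball whose $\mathcal H$-translates are disjoint, and average over $\mathcal H$.
\end{itemize}
This gives $\sum_k\|\partial_kB_M\|_\infty\le M\|[S,A]\|+O_q(1)$. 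On analogous test elements $F_{M,v}$ it also gives $\|[B_M,T_1]F_{M,v}\|_2\ge(M\|[A,|S|]\|-O_q(1))\|F_{M,v}\|_2$. Because $q$ is fixed while $M\to\infty$, no error term has to be uniform in $q$. The ratio therefore tends to $\|[A,|S|]\|/\|[S,A]\|\ge\kappa_N/\pi\gtrsim\log(1+q)$.
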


% Theorem~\ref{thm:torus} also follows from the sharpness assertion
% in Theorem~\ref{thm:torus-rational} by an approximation argument
% given at the end of Section~\ref{subsec:rational-tori}.

Finally, as a byproduct, we show that the operator-valued
analogue of Calder\'on's first commutator theorem fails
in general. Our construction also provides another counterexample to the sufficiency of naive operator-valued analogues of the classical $T1$ conditions for $L_2$-boundedness, distinct from the paraproduct counterexample
in~\cite{Mei2006}; see also~\cite{Hytonen2021}.

\begin{theorem}\label{thm:operator-valued-Calderon}
There exists a self-adjoint function
$b\in W^{1,\infty}(\R;B(\ell_2))$ such that
$\left[b,\left|\frac{\mathrm d}{\mathrm dx}\right|\right]$
has no bounded extension to $L_2(\R;\ell_2)$.
\end{theorem}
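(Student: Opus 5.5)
Our plan is to reduce the problem to a finite-dimensional matrix phenomenon, as announced in the abstract. We will construct Hermitian matrices $\mathsf D_n,\mathsf A_n\in M_{N_n}$ with
\[
\sup_n\|[\mathsf D_n,\mathsf A_n]\|<\infty,\qquad \|[|\mathsf D_n|,\mathsf A_n]\|\to\infty .
\]
A concrete choice is the standard one showing that $t\mapsto|t|$ is not operator Lipschitz. Take $\mathsf D_n$ diagonal with eigenvalues $\pm 1,\pm2,\dots,\pm n$ (symmetric about $0$). Take $\mathsf A_n$ to be the Schur multiplier transform $\bigl(\frac{1}{\lambda_i-\lambda_k}\bigr)$ applied to a bounded matrix $\mathsf X_n$, so that $[\mathsf D_n,\mathsf A_n]=\mathsf X_n$. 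Then $[|\mathsf D_n|,\mathsf A_n]$ is the Schur product of $\mathsf X_n$ with the divided difference $\frac{|\lambda_i|-|\lambda_k|}{\lambda_i-\lambda_k}$. On the block where $\lambda_i>0>\lambda_k$ this divided difference equals $\frac{\lambda_i+\lambda_k}{\lambda_i-\lambda_k}$, a Hilbert-matrix-type multiplier whose Schur norm grows like $\log n$. Choosing $\mathsf X_n$ norm one and nearly extremal gives the required blow-up, of size $\asymp\log n$.

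Next we embed this into $L_2(\R;\ell_2)$. Split $\ell_2=\bigoplus_n\C^{2n}$ and build $b=\bigoplus_n b_n$ with $\sup_n\|b_n\|_{W^{1,\infty}}<\infty$, which is automatically self-adjoint. For each $n$ we want functions $b_n(x)$ and a finite-dimensional subspace of $L_2(\R;\C^{2n})$ on which $\frac{\mathrm d}{\mathrm dx}$ acts (approximately) like $\mathrm i\mathsf D_n$ and multiplication by $b_n$ acts like $\mathsf A_n$. The natural route is modulation. Take $b_n(x)=\varepsilon\sum_{i,k}(\mathsf A_n)_{ik}e^{\mathrm i(\lambda_i-\lambda_k)\varepsilon^{-1}x}e_{ik}$, or more cleanly $b_n(x)=e^{\mathrm i x\mathsf D_n}\,\mathsf A_n'\,e^{-\mathrm i x\mathsf D_n}$ with $\mathsf A_n'$ a suitable rescaling. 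Then $b_n'(x)=e^{\mathrm i x\mathsf D_n}\,\mathrm i[\mathsf D_n,\mathsf A_n']\,e^{-\mathrm i x\mathsf D_n}$ is uniformly bounded, which gives the Lipschitz bound. For $\|b_n\|_\infty$ itself we only need $\|\mathsf A_n\|$ to be uniformly bounded; otherwise we replace the $W^{1,\infty}$ norm by its homogeneous part, or truncate.

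Now test the commutator on vectors $f(x)=\varphi(x/R)\,e^{\mathrm i x\mathsf D_n}v$, where $v\in\C^{2n}$ and $\varphi$ is a smooth bump with $R$ large. We have
\[
b_nf=\varphi(x/R)\,e^{\mathrm i x\mathsf D_n}\mathsf A_n'v ,
\]
and $|\frac{\mathrm d}{\mathrm dx}|$ acts on $e^{\mathrm i x\lambda}\varphi(x/R)$ as multiplication by $|\lambda|$, up to an error $O(R^{-1})$ in $L_2$-relative norm. One subtlety is that $\lambda=0$ must be avoided; the spectrum $\pm1,\dots,\pm n$ already does this. Letting $R\to\infty$ yields
\[
\bigl\|\bigl[b_n,|\tfrac{\mathrm d}{\mathrm dx}|\bigr]\bigr\|\ge\|[\mathsf A_n',|\mathsf D_n|]\|\to\infty .
\]
The direct sum over $n$ then has no bounded extension.

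The main obstacle is the first step: producing the explicit sequence with uniformly bounded $[\mathsf D,\mathsf A]$ and unbounded $[|\mathsf D|,\mathsf A]$ while keeping $\|\mathsf A_n\|$ bounded. The plan is to use the known non-operator-Lipschitz example for $|t|$ (Kato, Davies). Concretely, we take $\mathsf A_n=\begin{pmatrix}0&\mathsf Y\\ \mathsf Y^*&0\end{pmatrix}$ relative to the positive and negative eigenspaces, with $\mathsf Y_{ik}=\frac{1}{i+k}$ (the Hilbert matrix, of norm at most $\pi$). Then $[\mathsf D,\mathsf A]$ has off-diagonal entries $(i+k)\mathsf Y_{ik}=1$, which is not bounded. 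So instead we take $\mathsf Y_{ik}=\frac{1}{(i+k)}\cdot\frac{1}{\sqrt{ik}}$-type weights, or use dyadic eigenvalues $\pm 2^i$, where the triangular truncation appears in $[|\mathsf D|,\mathsf A]$ but not in $[\mathsf D,\mathsf A]$. Verifying the norm bounds on this choice is the delicate computation.
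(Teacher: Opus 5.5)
Your embedding step is essentially the paper's: $b=\bigoplus_n e^{\mathrm ix\mathsf D_n}\mathsf A_n e^{-\mathrm ix\mathsf D_n}$, tested on $e^{\mathrm ix\mathsf D_n}\varphi\, v$ with $v$ maximizing $\|[\mathsf A_n,|\mathsf D_n|]v\|$. Localizing with $\varphi(x/R)$, $R\to\infty$, instead of a band-limited $\varphi$ plus $\bigl\||\xi\mathbf{Id}+\mathsf D_n|-|\mathsf D_n|\bigr\|\le|\xi|$, is an inessential variation.

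The genuine gap is the finite-dimensional lemma. It carries all of the content of the theorem, and you do not establish it. Your first route asserts that the Schur multiplier $\bigl(\frac{i-k}{i+k}\bigr)_{i,k\le n}$ has norm of order $\log n$. For the spectrum $\pm1,\dots,\pm n$ this is false. The symbol equals $\tanh\bigl(\tfrac12(\log i-\log k)\bigr)$, and the points $\log i$ lie in an interval of length $\log n$, so the Schur norm is only of order $\log\log n$. It does still tend to infinity, but that is exactly what has to be proved (restricting to indices $i=2^r$ reduces it to the dyadic computation below), and you give no argument.

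Your fallback candidates fail. The Hilbert matrix gives $\|[\mathsf D,\mathsf A]\|=n$, as you observe. With $\mathsf Y_{ik}=\frac{1}{(i+k)\sqrt{ik}}$, the off-diagonal block of $[\mathsf D,\mathsf A]$ is the rank-one matrix $\bigl((ik)^{-1/2}\bigr)$, of norm $\sum_{i\le n}i^{-1}$. The corresponding block of $[|\mathsf D|,\mathsf A]$, with entries $\frac{i-k}{i+k}(ik)^{-1/2}$, is entrywise dominated in modulus by it. Hence $\|[|\mathsf D|,\mathsf A]\|\le\|[\mathsf D,\mathsf A]\|$ and there is no blow-up. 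The dyadic option points in the right direction, but you never say what $\mathsf A$ is.

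What is needed is an explicit pair with verified bounds, as in Proposition~\ref{prop:finite-obstruction}. Take $\Lambda_N=\diag(2,\dots,2^N)$, $D_N=\Lambda_N\oplus(-\Lambda_N)$, and off-diagonal block $G_{rs}=\frac{1}{(2^r+2^s)(r-s)}$.
\begin{itemize}
\item $\Lambda_NG_N+G_N\Lambda_N=\bigl((r-s)^{-1}\bigr)_{r\ne s}$ is a compression of the Toeplitz operator with the bounded sawtooth symbol, so $\|[D_N,E_N]\|\le\pi$.
\item $\Lambda_NG_N-G_N\Lambda_N$ has entries $\frac{2^{|r-s|}-1}{2^{|r-s|}+1}\cdot\frac{1}{|r-s|}\ge\frac{1}{3|r-s|}$ for $r\ne s$. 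Testing against the normalized constant vector gives $\|[|D_N|,E_N]\|\gtrsim\log N$.
\item $\|G_N\|\le\pi/6$ by the Hilbert--Schmidt norm.
\end{itemize}
The mechanism is that $[|D_N|,\cdot\,]$ converts the bounded antisymmetric kernel $1/(r-s)$ into the unbounded positive kernel $1/|r-s|$. Your worry about keeping $\|\mathsf A_n\|$ bounded is unnecessary: $(\mathsf D,\mathsf A)\mapsto(t\mathsf D,t^{-1}\mathsf A)$ preserves both commutator norms.

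A smaller point: with only uniform bounds on $\|\mathsf A_n\|$ and $\|[\mathsf D_n,\mathsf A_n]\|$, your $b$ is Lipschitz. But since $\|\mathsf D_n\|\to\infty$, there is no reason for its difference quotients to converge in operator norm uniformly over the blocks. The paper rescales so that both quantities are $O(2^{-n})$. This is affordable because the blow-up is unbounded (choose $N_n$ with $\kappa_{N_n}\ge\pi n2^n$), and it makes $b$ and $b'$ uniform limits of the finite block sums.
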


The construction of counterexamples in Theorems~\ref{thm:euclidean}, \ref{thm:torus}, and \ref{thm:operator-valued-Calderon} comes from the failure of the scalar
Lipschitz function $t\mapsto|t|$ to be commutator Lipschitz in operator
norm. In the infinite dimensional setting, McIntosh \cite{McIntosh1971} constructed self-adjoint operators
$D$ and $A$ for which $[D,A]$ is bounded but $[|D|,A]$ is unbounded.
Related operator-norm continuity
estimates for the absolute-value map were obtained by Kato
\cite{Kato1973}; see also \cite{AleksandrovPeller2016} for a broader
discussion of operator-Lipschitz functions.
In the finite-dimensional setting, Mathias \cite{Mathias1993} studied
Schur multipliers associated with the symbol
\[
 \left(\frac{\lambda_r-\lambda_s}{\lambda_r+\lambda_s}\right)_{r,s},
 \qquad \lambda_r>0,
\]
from which one obtains the existence of $2N\times2N$ matrices $D_N$ and $E_N$
satisfying
\begin{equation}\label{eq:intro-matrix-obstruction}
 \|[D_N,E_N]\|\leq\pi,\qquad
 \|[|D_N|,E_N]\|\gtrsim\log N.
\end{equation}
By a suitable rescaling and choice of dimensions $N_n$, one may therefore
obtain matrices $\mathsf D_n$ and $\mathsf A_n$ such that
\begin{equation}\label{eq:intro-matrix-obstruction1}
 \|[\mathsf D_n,\mathsf A_n]\|\leq 2^{-n},
 \qquad
 \|[|\mathsf D_n|,\mathsf A_n]\|\gtrsim n.
\end{equation}
In Section~\ref{s2}, we give explicit constructions of such
matrices, with a specially chosen spectrum that is useful for the
subsequent construction of counterexamples.

The main strategy for the proof of Theorem \ref{thm:euclidean} and \ref{thm:torus} is as follows: We first construct the counterexamples in the two-dimensional case,
where $\R_\theta^2$, $\theta\ne0$, is isomorphic to the Moyal plane
$B(L_2(\R))$. The basic idea is to realize the finite-dimensional
matrix obstruction on mutually orthogonal spatial blocks. In the
Schr\"odinger model, we choose compactly supported packets with pairwise
disjoint supports and associate to each pair $(\mathsf D_n,\mathsf A_n)$
a symbol $\mathfrak b_n$ supported on the corresponding block. The construction is
arranged so that
\[
 \|\partial_1\mathfrak b_n\|+\|\partial_2\mathfrak b_n\|
 \lesssim 2^{-n},
\]
whereas the test isometries $J_n$ constructed in
Section~\ref{s3.3} give
\[
 \|J_n^*[\mathfrak b_n,T_2]J_n\|
 \geq \|[|\mathsf D_n|,\mathsf A_n]\|-C2^{-n}
 \geq n-C2^{-n}.
\]
Thus we may sum the blocks to obtain
\(
 \mathfrak b=\sum_{n\ge1} \mathfrak b_n
\)
with bounded derivatives and an unbounded commutator
$[\mathfrak b,T_2]$. The general quantum Euclidean case follows from the tensor
decomposition. If $\operatorname{rank}\theta=2r>0$, then, after a change of variables,
\[
 Q^{\mathsf T}\theta Q
 =
 \theta_1J\oplus\cdots\oplus\theta_rJ\oplus0_{d-2r},
 \qquad
 J=
 \begin{pmatrix}
 0&1\\
 -1&0
 \end{pmatrix},
\]
with $\theta_k\ne0$, and consequently
\[
 \R_\theta^d
 \simeq
 \overline\bigotimes_{k=1}^rB(L_2(\R))
 \,\overline\otimes\,
 L_\infty(\R^{d-2r}).
\]
Thus the two-dimensional matrix obstruction can be embedded into every
Moyal factor, which combining with a change of variable gives the construction of desired symbol $b$ on $\R_\theta^d$.

The construction on noncommutative tori follows the same principle,
but the finite-dimensional obstruction is realized through orbit towers.
Writing the irrational rotation algebra as
$L_\infty(\mathbb T)\rtimes_{\alpha_\theta}\mathbb Z$, with the action
\(
 \alpha_\theta(z)=e^{2\pi i\theta}z,
\)
the irrationality of $\theta$ implies that, for every $N\ge1$, the points
\[
 z,\alpha_\theta(z),\ldots,\alpha_\theta^{N-1}(z)
\]
are distinct. Hence one may choose a sufficiently small interval
$I\subset\mathbb T$ such that
\[
 I,\alpha_\theta(I),\ldots,\alpha_\theta^{N-1}(I)
\]
are pairwise disjoint. Smooth functions supported on these translates
then form an $N$-level orbit tower, on which the relevant operators
realize the prescribed finite-dimensional matrix action.   Once such orbit towers are available, the same comparison between first
derivatives and $[\mathsf D_n,\mathsf A_n]$, and between the Calder\'on
commutator and $[|\mathsf D_n|,\mathsf A_n]$, yields the desired
counterexample.
 For quantum tori, the corresponding
reduction is obtained by embedding an irrational rotation algebra along
a two-dimensional integer sublattice.

For rational deformation matrices, we prove
Theorem~\ref{thm:torus-rational} by a finite-dimensional matrix
realization. If \(q\theta\) has integer entries, we transfer
\(\T_\theta^d\) to
\(L_\infty(\T^d)\,\overline\otimes\,\mathbb M_{q^d}(\C)\)
via a trace-preserving \(*\)-homomorphism that intertwines the derivations and Fourier multipliers in the two settings. This transference argument is standard in harmonic analysis on quantum settings; see, for instance, \cite{XiongXuYin2018,GJP}.
The dyadic representation theorem, the uniform bound of matrix-valued dyadic shifts and the logarithmic bound for
matrix-valued paraproducts give a Calder\'on commutator estimate
with logarithmic dependence on the matrix dimension. Transferring
this estimate to the quantum torus yields the upper bound
$C_d\log(1+q)$. To prove its optimality, we use a $q$-dimensional realization of the two-dimensional rational rotation algebra with parameter $1/q$.
Localized matrix functions are extended by its finite translation action so that they satisfy the exact Fourier constraint of the quantum torus. High-frequency modulation then recovers the
finite-matrix obstruction from Section~\ref{s2} and gives the
matching logarithmic lower bound.

Finally, we emphasize that Calder\'on's first commutator theorem fits naturally into the classical $T1$ framework developed by David and Journ\'e. In the operator-valued
and quantum settings, however, no unrestricted analogue of the classical
$T1$ principle is available for this purpose. Theorem~\ref{thm:operator-valued-Calderon} provides an operator-valued
Calder\'on commutator that is a $T1$ operator but is not $L_2$-bounded,
even though both $T1$ and $T^*1$ are bounded. Our counterexamples on quantum Euclidean spaces reveal a parallel
obstruction: bounded first derivatives of the symbol are insufficient
to recover the classical Calder\'on boundedness principle. Thus, identifying the optimal $T1$ hypotheses in the operator-valued and quantum settings remains an open problem.

\subsection{Organization and Notation}
The paper is organized as follows. Section~\ref{s2} presents the finite-dimensional matrix construction. Section~\ref{sec:euclidean} proves Theorem~\ref{thm:euclidean}, first in dimension two and then in higher dimensions via a tensor reduction. Section~\ref{sec:torus} proves Theorem~\ref{thm:torus} by a similar reduction. Section~\ref{subsec:rational-tori} proves Theorem~\ref{thm:torus-rational}, including the optimal logarithmic dependence on the denominator $q$. Finally, Section~\ref{sec:t1} proves Theorem~\ref{thm:operator-valued-Calderon} and discusses the operator-valued and quantum $T1$ theory.

Throughout, we let $\T=:\R/(2\pi\Z)$ equipped with the normalized Haar measure. We use $\|\cdot\|$ for an operator norm and $\|\cdot\|_p$ for a
noncommutative $L_p$ norm. Constants denoted by $C$ may change from line
to line.  We write $S_2(H)$ and $K(H)$ for the Hilbert--Schmidt and compact
operators on a Hilbert space $H$, respectively. For a positive integer $N,$ let $\mathbb M_N(\C)$ denote the $N\times N$  complex matrix algebra and $\mathbf{Id}_{N}$ denote the identity matrix in $\mathbb M_N(\C)$.

\section{A finite-dimensional obstruction}\label{s2}

This section contains the matrix construction used in all three settings.
Its essential feature is that a small commutator with a diagonal matrix
need not imply a small commutator with its absolute value. The matrices
below give an explicit logarithmic separation.

For \(N\geq2\), set
\begin{equation}\label{eq:DN-definition}
  \Lambda_N:=\diag(2,2^2,\ldots,2^N),
  \qquad
  D_N:=
  \begin{pmatrix}
    \Lambda_N&0\\
    0&-\Lambda_N
  \end{pmatrix}.
\end{equation}
Let \(G_N=(g_{rs})_{r,s=1}^N\) be the real antisymmetric matrix defined by
\begin{equation}\label{eq:GN-definition}
  g_{rs}
  :=
  \begin{cases}
    \displaystyle
    \frac{1}{(2^r+2^s)(r-s)},&r\neq s,\\[6pt]
    0,&r=s,
  \end{cases}
\end{equation}
and define the self-adjoint \(2N\times2N\) matrix
\begin{equation}\label{eq:EN-definition}
  E_N:=
  \begin{pmatrix}
    0&G_N\\
    G_N^*&0
  \end{pmatrix}.
\end{equation}

\begin{proposition}\label{prop:finite-obstruction}
For every \(N\geq2\),
\begin{align}
  \norm{E_N}&\leq\frac{\pi}{6},\label{eq:EN-bound}\\
  \norm{[D_N,E_N]}&\leq\pi,\label{eq:DN-EN-bound}\\
  \norm{[|D_N|,E_N]}
  &\geq
  \kappa_N
  :=
  \frac{2}{3}
  \left(
   \sum_{r=1}^{N-1}r^{-1}-\frac{N-1}{N}
  \right).
  \label{eq:modulus-bound}
\end{align}
\end{proposition}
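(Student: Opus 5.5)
The plan is to reduce all three estimates to scalar computations with the entries of $G_N$, using that $D_N$ and $|D_N|=\diag(\Lambda_N,\Lambda_N)$ are diagonal. Since $E_N$ is off-diagonal in the $2\times 2$ block decomposition, a direct computation gives
\[
[D_N,E_N]=\begin{pmatrix}0&\Lambda_NG_N+G_N\Lambda_N\\ -(\Lambda_NG_N^*+G_N^*\Lambda_N)&0\end{pmatrix},\qquad
[|D_N|,E_N]=\begin{pmatrix}0&\Lambda_NG_N-G_N\Lambda_N\\ \Lambda_NG_N^*-G_N^*\Lambda_N&0\end{pmatrix}.
\]
The norm of such a block-off-diagonal matrix is the maximum of the norms of its two off-diagonal blocks, and here both blocks are adjoints of each other up to sign. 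So everything reduces to $N\times N$ matrices.

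\emph{Step 1: the bound \eqref{eq:DN-EN-bound}.} The $(r,s)$ entry of $\Lambda_NG_N+G_N\Lambda_N$ is $(2^r+2^s)g_{rs}=1/(r-s)$ for $r\neq s$, and $0$ for $r=s$. This is a finite section of the antisymmetric Hilbert matrix $\bigl(1/(r-s)\bigr)_{r\neq s}$. By Hilbert's inequality (equivalently, the Toeplitz symbol $\sum_{k\neq0}e^{\mathrm ik t}/k=\mathrm i(\pi-t)$ on $(0,2\pi)$ is bounded by $\pi$), its norm is at most $\pi$.

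\emph{Step 2: the bound \eqref{eq:EN-bound}.} We have $\|E_N\|=\|G_N\|$, and $G_N=H\circ c$ is the Schur product of the Hilbert matrix $H$ from Step 1 with $c_{rs}=1/(2^r+2^s)$. The kernel $c_{rs}=\int_0^\infty e^{-t2^r}e^{-t2^s}\,dt$ is positive semidefinite. Hence its Schur multiplier norm equals $\max_r c_{rr}=\max_r 2^{-r-1}$, which already gives $\|G_N\|\le\pi/4$.

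To sharpen this to $\pi/6$, I will exploit that $H$ has zero diagonal, so the diagonal of $c$ may be altered freely. The task is then to find a positive semidefinite kernel $c'$ that agrees with $c$ off the diagonal and has diagonal at most $1/6$. Note that the off-diagonal entries are all at most $1/(2+4)=1/6$. One candidate is to write $c_{rs}=2^{-(r+s)/2}\cdot\tfrac12\operatorname{sech}\bigl(\tfrac{(r-s)\ln 2}{2}\bigr)$ and adjust the diagonal of the positive-definite Toeplitz factor. I expect this diagonal-modification step to be the main technical obstacle. Only the cruder constant $\pi/4$ is really needed downstream, so if the sharp constant resists, I would settle for that.

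\emph{Step 3: the lower bound \eqref{eq:modulus-bound}.} The $(r,s)$ entry of $K:=\Lambda_NG_N-G_N\Lambda_N$ is
\[
K_{rs}=\frac{2^r-2^s}{2^r+2^s}\cdot\frac{1}{r-s}\qquad(r\neq s).
\]
Both factors change sign together, so $K_{rs}>0$. Moreover, for $|r-s|\ge1$,
\[
\frac{|2^r-2^s|}{2^r+2^s}\ge\frac{2^{|r-s|}-1}{2^{|r-s|}+1}\ge\frac13.
\]
Test $K$ with the unit vector $x=N^{-1/2}(1,\dots,1)$. Grouping the off-diagonal pairs by $k=|r-s|$ gives
\[
\|[|D_N|,E_N]\|\ge\|K\|\ge\langle Kx,x\rangle\ge\frac{1}{3N}\sum_{r\neq s}\frac1{|r-s|}=\frac{2}{3N}\sum_{k=1}^{N-1}\frac{N-k}{k}=\frac23\Bigl(\sum_{k=1}^{N-1}\frac1k-\frac{N-1}{N}\Bigr)=\kappa_N,
\]
which is exactly the claimed bound.
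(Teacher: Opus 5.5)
Your Steps 1 and 3 are exactly the paper's argument: the finite section of the Toeplitz matrix with symbol $\mathrm i(\pi-t)$, and testing $\Lambda_NG_N-G_N\Lambda_N$ against the normalized constant vector. Both are correct. Your $\pi/4$ bound in Step 2 is also valid, since the Schur multiplier norm of the positive semidefinite kernel $1/(2^r+2^s)$ is its largest diagonal entry, $1/4$.

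\textbf{The gap is in Step 2.} The proposition asserts $\norm{E_N}\le\pi/6$, and you do not prove it. The constant $1/6$ is also used literally later, for example in $\norm{\mathsf A_n}\le 2^{-n}/6$ in Corollary~\ref{cor:scaled-obstruction} and in $\norm{\mathfrak b_n}\le \pi a_n/(6\ell_n)$ in Section~\ref{sec:torus}.

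The sharpening you propose cannot work. Suppose $c'$ is positive semidefinite, agrees with $c$ off the diagonal, and has $c'_{11},c'_{22}\le 1/6$. Then $c'_{11}c'_{22}\ge c_{12}^2=1/36$ forces $c'_{11}=c'_{22}=1/6$. The vector $x=e_1-e_2$ then satisfies $\inner{c'x}{x}=0$, hence $c'x=0$. But $(c'x)_3=c_{13}-c_{23}=\frac1{10}-\frac1{12}\neq0$ as soon as $N\ge3$, a contradiction. Your $\operatorname{sech}$--Toeplitz candidate is a special case of such a completion, so it fails as well.

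\textbf{The missing idea is simply to dominate the operator norm by the Hilbert--Schmidt norm.} Since $(2^r+2^s)^2\ge 4^{\max(r,s)}$,
\[
\norm{G_N}^2\le\norm{G_N}_{S_2}^2
=2\sum_{r>s}\frac{1}{(2^r+2^s)^2(r-s)^2}
\le 2\sum_{r\ge2}4^{-r}\sum_{k\ge1}\frac1{k^2}
=2\cdot\frac1{12}\cdot\frac{\pi^2}{6}=\frac{\pi^2}{36}.
\]
This gives the stated bound $\norm{E_N}=\norm{G_N}\le\pi/6$ directly.

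You are right that the qualitative downstream arguments would survive with $\pi/4$ after adjusting constants. But as written your proposal proves a weaker proposition than the one stated.
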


\begin{proof}
The Hilbert--Schmidt norm gives
\begin{align*}
  \norm{G_N}^2
  &\leq \norm{G_N}_{S_2}^2
    =\sum_{r\neq s}
      \frac{1}{(2^r+2^s)^2(r-s)^2}\\
  &\leq
    2\sum_{r=2}^\infty 4^{-r}
      \sum_{s=1}^{r-1}\frac{1}{(r-s)^2}\\
  &\leq
    2\left(\sum_{r=2}^\infty4^{-r}\right)
      \left(\sum_{k=1}^\infty\frac{1}{k^2}\right)
   =
    2\cdot\frac{1}{12}\cdot\frac{\pi^2}{6}
   =
    \frac{\pi^2}{36},
\end{align*}
which proves \eqref{eq:EN-bound} since $\|E_N\|=\|G_N\|$.
Let \(X_N:=(x_{rs})_{r,s=1}^N\), where \(x_{rs}=(r-s)^{-1}\) for \(r\neq s\)
and \(x_{rr}=0\). Then it follows that
\begin{equation}\label{XN}
    X_N=\Lambda_NG_N+G_N\Lambda_N.
\end{equation}
The convolution operator $X=(x_{rs})_{r,s\in\Z}$ on
$\ell_2(\Z)$ with kernel $1/k$, $k\ne0$, is unitarily equivalent
to multiplication on $L_2(\T)$ by the bounded function
\[
  t\mapsto \sum_{k\neq0}\frac{e^{-\mathrm ikt}}{k}
  =
  \mathrm i(t-\pi),
  \qquad 0<t<2\pi.
\]
Its operator norm is therefore \(\pi\). Since $X_N$ is a finite
compression of $X$, it follows that \(\norm{X_N}\leq\pi\).
From \eqref{XN}, a direct calculation yields
\[
  [D_N,E_N]
  =
  \begin{pmatrix}
    0&X_N\\
    -X_N^*&0
  \end{pmatrix},
\]
and hence \eqref{eq:DN-EN-bound}.

Since \(|D_N|=\diag(\Lambda_N,\Lambda_N)\),
\[
  [|D_N|,E_N]
  =
  \begin{pmatrix}
    0&Y_N\\
    -Y_N^*&0
  \end{pmatrix},
  \qquad
  Y_N=\Lambda_NG_N-G_N\Lambda_N.
\]
For \(r\neq s\),
\begin{equation}\label{eq:YN-entry}
  (Y_N)_{rs}
  =
  \frac{2^r-2^s}{2^r+2^s}\frac{1}{r-s}
  =
  \frac{2^{|r-s|}-1}{2^{|r-s|}+1}
  \frac{1}{|r-s|}.
\end{equation}
Thus \(Y_N\) is real symmetric and has nonnegative off-diagonal entries. Let
\[
 \xi_N=N^{-1/2}(1,\ldots,1)^{\mathsf T}.
\]
Since $(2^k-1)/(2^k+1)\geq1/3$ for $k\geq1$, we obtain
\begin{align*}
  \norm{Y_N}
  &\geq \inner{Y_N\xi_N}{\xi_N}\\
  &=
    \frac{2}{N}
    \sum_{k=1}^{N-1}
    (N-k)\frac{2^k-1}{2^k+1}\frac{1}{k}\\
  &\geq
    \frac{2}{3N}
    \sum_{k=1}^{N-1}\frac{N-k}{k}
   =
    \frac{2}{3}
    \left(
     \sum_{r=1}^{N-1}r^{-1}-\frac{N-1}{N}
    \right).
\end{align*}
Finally, the norm of the commutator $[|D_N|,E_N]$ equals \(\norm{Y_N}\), proving
\eqref{eq:modulus-bound}.
\end{proof}

\begin{corollary}\label{cor:scaled-obstruction}
For every \(n\geq1\), there exist a diagonal self-adjoint matrix \(\mathsf{D}_n\)
with distinct nonzero eigenvalues and a self-adjoint matrix \(\mathsf{A}_n\),
with $\|\mathsf A_n\|\leq 2^{-n}/6$, such that
\begin{equation}\label{eq:scaled-obstruction}
  \norm{[\mathsf{D}_n,\mathsf{A}_n]}\leq2^{-n},
  \qquad
  \norm{[|\mathsf{D}_n|,\mathsf{A}_n]}\geq n.
\end{equation}
\end{corollary}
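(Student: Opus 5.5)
We derive the corollary from Proposition~\ref{prop:finite-obstruction} by rescaling $E_N$ and choosing $N$ large. For a given $n\geq1$ we take $\mathsf D_n:=D_N$ and $\mathsf A_n:=\varepsilon E_N$ with $\varepsilon:=2^{-n}/\pi$ and $N=N_n$ to be chosen.

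First, $D_N$ is diagonal and self-adjoint. Its eigenvalues $\pm2^r$, $1\le r\le N$, are distinct and nonzero. Likewise, $E_N$ is self-adjoint, so $\mathsf A_n$ is self-adjoint. By \eqref{eq:EN-bound},
\[
\|\mathsf A_n\|\leq \varepsilon\pi/6=2^{-n}/6 .
\]
By \eqref{eq:DN-EN-bound},
\[
\|[\mathsf D_n,\mathsf A_n]\|=\varepsilon\|[D_N,E_N]\|\leq 2^{-n}.
\]

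The remaining point, and the only one needing any care, is the lower bound. By \eqref{eq:modulus-bound},
\[
\|[|\mathsf D_n|,\mathsf A_n]\|\geq \varepsilon\kappa_N=\frac{2^{-n}}{\pi}\kappa_N .
\]
Since $\sum_{r=1}^{N-1}r^{-1}\to\infty$ and $(N-1)/N<1$, we have $\kappa_N\geq \tfrac23(\log N-1)\to\infty$. Hence we choose $N$ so large that $\kappa_N\geq \pi n2^n$; for instance, any $N\geq \exp\!\bigl(1+\tfrac32\pi n2^n\bigr)$ works. This gives $\|[|\mathsf D_n|,\mathsf A_n]\|\geq n$.

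Nothing is delicate here: the matrix dimension $2N_n$ grows doubly exponentially in $n$, which is allowed, since the corollary places no restriction on the size of the matrices.
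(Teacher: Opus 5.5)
Your proposal is correct and is essentially the paper's proof: the same choice $\mathsf D_n=D_{N_n}$, $\mathsf A_n=\frac{2^{-n}}{\pi}E_{N_n}$ with $\kappa_{N_n}\ge\pi n2^n$, and all bounds follow directly from Proposition~\ref{prop:finite-obstruction} by scaling. Your explicit threshold $N\ge\exp(1+\tfrac32\pi n2^n)$, obtained from $\sum_{r=1}^{N-1}r^{-1}\ge\log N$, is a valid way of making the paper's ``choose $N_n$ so large'' quantitative.
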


\begin{proof}
Choose \(N_n\) so large that
\[\kappa_{N_n}=\frac{2}{3}
  \left(
   \sum_{r=1}^{N_n-1}r^{-1}-\frac{N_n-1}{N_n}
  \right)\geq\pi n2^n,\] and set
\begin{equation}\label{eq:Dn-An-definition}
  \mathsf{D}_n:=D_{N_n},
  \qquad
  \mathsf{A}_n:=\frac{2^{-n}}{\pi}E_{N_n}.
\end{equation}
Proposition~\ref{prop:finite-obstruction} gives
\eqref{eq:scaled-obstruction} and the stated bound for $\|\mathsf A_n\|$. The eigenvalues of \(\mathsf{D}_n\) are
\(\pm2^r\), \(1\leq r\leq N_n\), and are therefore distinct and nonzero.
\end{proof}

\section{Proof of Theorem~\ref{thm:euclidean}}\label{sec:euclidean}

In this section, we prove Theorem~\ref{thm:euclidean}. We first recall the differential structure of quantum Euclidean spaces and identify the Moyal plane with $B(L_2(\R))$ through the Schr\"odinger representation. Using spatially separated packets, we realize the finite-dimensional obstruction from Section~\ref{s2} to construct a self-adjoint Lipschitz symbol whose Calder\'on's first commutator is unbounded. We then extend the construction to every
nonzero deformation matrix by
the tensor decomposition of quantum Euclidean spaces and a suitable linear change of variables.

\subsection{Quantum Euclidean space and its differential structure}

We recall the definitions needed below, following
\cite{ChenHong2026,GJP,MSX2020} and retaining
the conventions fixed in the introduction. Let $\theta$ be a real
antisymmetric $d\times d$ matrix. The quantum Euclidean space
$\R_\theta^d$ is the von Neumann algebra generated by the twisted
left regular representation on $L_2(\R^d)$,
\[
 (\lambda_\theta(\xi)f)(x)
 :=e^{\mathrm i\xi^{\mathsf T}\theta x/2}f(x-\xi),
 \qquad \xi,x\in\R^d.
\]
The generating unitaries satisfy
\begin{equation}\label{eq:general-weyl-relations}
 \lambda_\theta(\xi)\lambda_\theta(\eta)
 =e^{\mathrm i\xi^{\mathsf T}\theta\eta/2}
       \lambda_\theta(\xi+\eta),
 \qquad
 \lambda_\theta(\xi)^*=\lambda_\theta(-\xi).
\end{equation}
For $f\in L_1(\R^d)$, we define
\[
 \lambda_\theta(f)
 =\int_{\R^d}f(\xi)\lambda_\theta(\xi)\dd\xi,
 \qquad
 \|\lambda_\theta(f)\|_\infty\leq\|f\|_1,
\]
where the integral is understood in the strong operator sense. The
quantum Schwartz class is defined by
\begin{equation}\label{eq:qes-schwartz}
 \mathcal S(\R_\theta^d)
 :=\{\lambda_\theta(f):f\in\mathcal S(\R^d)\}.
\end{equation}
We equip $\mathcal S(\R_\theta^d)$ with the Fr\'echet topology transported
from the usual Schwartz topology through $\lambda_\theta$. Its continuous
linear dual is denoted by $\mathcal S'(\R_\theta^d)$ and is referred to
as the space of tempered distributions on $\R_\theta^d$. For further
details, see, for instance,
\cite{GJP,MSX2020}.

The canonical normal faithful semifinite trace $\tau_\theta$ on
$\R_\theta^d$ is determined on the quantum Schwartz class by
\[
 \tau_\theta(\lambda_\theta(f))=f(0),
 \qquad f\in\mathcal S(\R^d).
\]
For $f,g\in\mathcal S(\R^d)$, the Weyl relations yield
\begin{equation}\label{eq:qes-trace-pairing}
 \begin{aligned}
  \lambda_\theta(f)^*
    &=\lambda_\theta\bigl(\overline{f(-\,\cdot)}\bigr),\\
  \tau_\theta\bigl(\lambda_\theta(f)\lambda_\theta(g)\bigr)
    &=\int_{\R^d}f(\xi)g(-\xi)\dd\xi.
 \end{aligned}
\end{equation}
For $1\leq p<\infty$, the associated noncommutative $L_p$-space
$L_p(\R_\theta^d)$ consists of the $\tau_\theta$-measurable operators
affiliated with $\R_\theta^d$ such that
\[
 \|x\|_p
 :=\bigl(\tau_\theta(|x|^p)\bigr)^{1/p}<\infty.
\]
We set $L_\infty(\R_\theta^d):=\R_\theta^d$, equipped with the operator
norm. It was proved in \cite{GJP} that $\mathcal S(\R_\theta^d)$ is
norm dense in $L_p(\R_\theta^d)$ for $1\leq p<\infty$ and weak-$*$
dense in $L_\infty(\R_\theta^d)$.
In particular, \eqref{eq:qes-trace-pairing} gives the Plancherel identity
\begin{equation}\label{eq:qes-plancherel}
 \|\lambda_\theta(f)\|_2=\|f\|_{L_2(\R^d)},
 \qquad f\in\mathcal S(\R^d).
\end{equation}
Hence, by density, $\lambda_\theta$ extends uniquely to a unitary
operator
\[
 \lambda_\theta:L_2(\R^d)\longrightarrow L_2(\R_\theta^d),
\]
called the Weyl--Plancherel transform.
Moreover, by duality, $\lambda_\theta$ also extends to a topological
isomorphism
\[
 \lambda_\theta:\mathcal S'(\R^d)\longrightarrow
 \mathcal S'(\R_\theta^d),
\]
determined by
\begin{equation}\label{eq:qes-tempered-quantization}
 \bigl(\lambda_\theta(u),\lambda_\theta(f)\bigr)
 =\bigl(u,\widetilde f\bigr),
 \qquad
 \widetilde f(\xi)=f(-\xi),
 \qquad f\in\mathcal S(\R^d).
\end{equation}

On $\mathcal S(\R_\theta^d)$, the canonical partial derivatives are
determined by
\begin{equation}\label{eq:qes-partial-derivatives}
 \partial_k\lambda_\theta(f)
   =\lambda_\theta(\mathrm i\xi_k f(\xi)),
 \qquad 1\leq k\leq d.
\end{equation}
For a multi-index $\beta\in\mathbb N^d$, we write
\[
 |\beta|_1:=\beta_1+\cdots+\beta_d,\qquad
 \partial^\beta
 :=\partial_1^{\beta_1}\cdots\partial_d^{\beta_d},
 \qquad \partial^0:=\mathrm{id}.
\]
These derivatives extend to tempered distributions by duality:
\begin{equation}\label{eq:qes-distributional-calculus}
 (\partial^\beta w,x)
 =(-1)^{|\beta|_1}(w,\partial^\beta x),
 \qquad
 w\in\mathcal S'(\R_\theta^d),\quad
 x\in\mathcal S(\R_\theta^d).
\end{equation}
Equivalently, for $u\in\mathcal S'(\R^d)$,
\[
 \partial^\beta\lambda_\theta(u)
   =\lambda_\theta\bigl((\mathrm i\xi)^\beta u\bigr).
\]

For an integer $r\geq0$ and $1\leq p\leq\infty$, the Sobolev space
$W^{r,p}(\R_\theta^d)$ and its norm are defined by
\begin{equation}\label{eq:qes-sobolev}
 \begin{aligned}
 W^{r,p}(\R_\theta^d)
   &:=\{x\in\mathcal S'(\R_\theta^d):
        \partial^\beta x\in L_p(\R_\theta^d)
        \text{ for }|\beta|_1\leq r\},\\
 \|x\|_{W^{r,p}}
   &:=\sum_{|\beta|_1\leq r}\|\partial^\beta x\|_p.
 \end{aligned}
\end{equation}
In particular, $W^{1,\infty}(\R_\theta^d)$ agrees with the Lipschitz
class defined in \eqref{eq:W1infty-definition}. Moreover, regarding
each $\partial_k$ as its closure on $L_2(\R_\theta^d)$,
\[
 W^{1,2}(\R_\theta^d)
 =\bigcap_{k=1}^d\operatorname{Dom}(\partial_k).
\]

We now introduce our Fourier multiplier convention. For a measurable
symbol $m$ of at most polynomial growth, define
\begin{equation}\label{eq:qes-fourier-multiplier}
 m(D)\lambda_\theta(f):=\lambda_\theta(mf),
 \qquad f\in\mathcal S(\R^d).
\end{equation}
In particular, the operator
\(
 T_j=-\partial_jR_j
\)
is the Fourier multiplier with symbol
\begin{equation}\label{eq:euclidean-mj}
 m_j(\xi)=\frac{\xi_j^2}{|\xi|}\quad(\xi\ne0),
 \qquad m_j(0)=0.
\end{equation}
Since
\[
 |m_j(\xi)|=\frac{\xi_j^2}{|\xi|}
 \leq |\xi_j|,
\]
the Plancherel identity yields
\begin{equation}\label{eq:Tj-W12}
 \|T_jx\|_2
 \leq \|\partial_jx\|_2,
 \qquad x\in W^{1,2}(\R_\theta^d).
\end{equation}

The following lemma shows that when $b\in W^{1,\infty}(\R_\theta^d)$, Calder{\'o}n's first commutator $[b,T_j]$ is well-defined on $W^{1,2}(\R_\theta^d),$  which is dense in
$L_2(\R_\theta^d)$ since
$\mathcal S(\R_\theta^d)\subset W^{1,2}(\R_\theta^d)$ and
$\mathcal S(\R_\theta^d)$ is dense in $L_2(\R_\theta^d)$. 
\begin{lemma}
For every $b\in W^{1,\infty}(\R_\theta^d)$ and $1\leq j\leq d$,
the commutator $[b,T_j]$ is a bounded map from
$W^{1,2}(\R_\theta^d)$ to $L_2(\R_\theta^d)$.
\end{lemma}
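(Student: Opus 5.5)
The plan is to bound the two terms of the commutator separately, without using any cancellation. For $x\in W^{1,2}(\R_\theta^d)$ we write
\[
 [b,T_j]x=b\,(T_jx)-T_j(bx).
\]
Since $b\in L_\infty(\R_\theta^d)$ and $T_jx\in L_2(\R_\theta^d)$, the first term lies in $L_2$. By \eqref{eq:Tj-W12} and Hölder's inequality for the trace,
\[
 \|b\,T_jx\|_2\le\|b\|_\infty\|T_jx\|_2\le\|b\|_\infty\|\partial_jx\|_2 .
\]

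For the second term it suffices to show that $bx\in W^{1,2}(\R_\theta^d)$ with
\[
 \|bx\|_{W^{1,2}}\lesssim \|b\|_{W^{1,\infty}}\|x\|_{W^{1,2}}.
\]
Applying \eqref{eq:Tj-W12} once more then gives $\|T_j(bx)\|_2\le\|\partial_j(bx)\|_2$, and the two bounds together prove the lemma. The required estimate follows from the Leibniz rule
\[
 \partial_k(bx)=(\partial_kb)\,x+b\,\partial_kx .
\]
Granted this identity, Hölder's inequality yields
\[
 \|\partial_k(bx)\|_2\le\|\partial_kb\|_\infty\|x\|_2+\|b\|_\infty\|\partial_kx\|_2 .
\]
Here both $(\partial_k b)x$ and $b\,\partial_kx$ lie in $L_2$, because $\partial_kb\in L_\infty$ by assumption.

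The main obstacle is justifying the Leibniz rule at this level of generality. The derivatives of $b$ are only distributional, and $x$ is only in $W^{1,2}$, so the identity must be read in $\mathcal S'(\R_\theta^d)$, tested against $y\in\mathcal S(\R_\theta^d)$. I would argue in three steps.

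\emph{Step 1: Schwartz elements.} For $b,x\in\mathcal S(\R_\theta^d)$ the rule holds because each $\partial_k$ is a derivation of the twisted convolution product. Indeed, $\lambda_\theta(\xi)\lambda_\theta(\eta)=e^{\mathrm i\xi^{\mathsf T}\theta\eta/2}\lambda_\theta(\xi+\eta)$ and $\mathrm i(\xi+\eta)_k=\mathrm i\xi_k+\mathrm i\eta_k$. Equivalently, $\partial_k$ is the generator of the trace-preserving automorphism group $\lambda_\theta(\xi)\mapsto e^{\mathrm i s\xi_k}\lambda_\theta(\xi)$.

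\emph{Step 2: $b$ Schwartz, $x\in W^{1,2}$.} Approximate $x$ in the $W^{1,2}$-norm by Schwartz elements; this is possible by Plancherel, since $\mathcal S(\R^d)$ is dense in the weighted space $L_2((1+|\xi|^2)\,d\xi)$. Both sides of the Leibniz identity are continuous in $x$ for this norm, so the rule passes to the limit.

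\emph{Step 3: general $b\in W^{1,\infty}$.} For the pairing with $y$ I would write, using the trace property,
\[
 (\partial_k(bx),y)=-\tau_\theta(bx\,\partial_ky)=-\tau_\theta\bigl(b\,(x\,\partial_ky)\bigr).
\]
Then I would use the definition of $\partial_kb$ as a distribution, applied to the $L_1$ element $x\,\partial_k y$ rather than a Schwartz test element. This is legitimate once the duality pairing is extended from Schwartz test elements to $L_1$-elements whose derivative is also in $L_1$; such elements can be approximated by Schwartz elements in the $W^{1,1}$ sense.

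As a safer alternative to this approximation, I would regularize $b$ by the heat semigroup, $b_\varepsilon=e^{-\varepsilon L}b$. This is a Fourier multiplier given by convolution against a Gaussian in the translation automorphisms, so it is a contraction on $L_\infty$ and commutes with $\partial_k$. One then applies the rule to $b_\varepsilon$ and passes to the weak limit. The regularized $b_\varepsilon$ is still not Schwartz, only smooth and bounded, so the limit argument in $\mathcal S'$ still has to be carried out with some care.
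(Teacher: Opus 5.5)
Your argument is the paper's own: you split $[b,T_j]x=b\,T_jx-T_j(bx)$, bound both terms via $\|T_jz\|_2\le\|\partial_jz\|_2$, and use the Leibniz rule to get $bx\in W^{1,2}(\R_\theta^d)$, which yields $\|\partial_jb\|_\infty\|x\|_2+2\|b\|_\infty\|\partial_jx\|_2$. The paper simply asserts the Leibniz rule, so your Steps 1--3 go beyond it; one slip in Step 3 is that the distributional definition of $\partial_kb$ must be applied to $xy\in W^{1,1}$, after rewriting $x\,\partial_ky=\partial_k(xy)-(\partial_kx)\,y$, not to $x\,\partial_ky$ itself.
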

\begin{proof}
    Let
$b\in W^{1,\infty}(\R_\theta^d)$ and
$x\in W^{1,2}(\R_\theta^d)$. The Leibniz rule gives
\begin{equation*}
 \partial_k(bx)
 =(\partial_kb)x+b\partial_kx,
\end{equation*}
and hence
\[
 \|\partial_k(bx)\|_2
 \leq
 \|\partial_kb\|_\infty\|x\|_2
 +\|b\|_\infty\|\partial_kx\|_2.
\]
Thus $bx\in W^{1,2}(\R_\theta^d)$. Using
\eqref{eq:Tj-W12}, we obtain
\begin{align*}
 \|[b,T_j]x\|_2
 &\leq
 \|bT_jx\|_2+\|T_j(bx)\|_2\\
 &\leq
 \|b\|_\infty\|\partial_jx\|_2
 +\|\partial_j(bx)\|_2\\
 &\leq
 \|\partial_jb\|_\infty\|x\|_2
 +2\|b\|_\infty\|\partial_jx\|_2\\
 &\lesssim
 \|b\|_{W^{1,\infty}}\|x\|_{W^{1,2}}.
\end{align*}
Consequently, $[b,T_j]$ is well defined as an operator from
$W^{1,2}(\R_\theta^d)$ to $L_2(\R_\theta^d)$.
\end{proof}

\subsection{The Moyal plane and its Schr\"odinger realization}

We first work with the Heisenberg matrix
\[
 \theta=\begin{pmatrix}0&1\\-1&0\end{pmatrix}.
\]
In this case, the Stone--von Neumann theorem identifies
$\R_\theta^2$ with the Moyal plane $B(L_2(\R))$, also referred to as
quantum phase space; see
\cite[Section~2.2.2]{MSX2020}. Hence Calder\'on's
first commutator problem on $\R_\theta^2$ can be transferred to the
corresponding problem on $B(L_2(\R))$.

We specify this realization and its normalization below. Recall that the
position and momentum operators are the self-adjoint closures of the
operators defined on $\mathcal S(\R)$ by
\[
 Qf(x)=xf(x),\qquad Pf(x)=-\mathrm i f'(x).
\]
The operators $Q$ and $P$ satisfy, on $\mathcal S(\R)$, the Heisenberg commutation relation
\[
 [Q,P]=\mathrm i\1.
\]
Their Weyl unitaries are defined by
\begin{align*}
 W(s,t)&=e^{\mathrm i(sQ-tP)},\\
 (W(s,t)f)(x)&=e^{\mathrm is(x-t/2)}f(x-t),
 \qquad (s,t)\in\R^2.
\end{align*}
These unitaries satisfy the Weyl relations
\begin{equation}\label{Weyl relation}
 W(s,t)W(s',t')
 =e^{\mathrm i(st'-ts')/2}W(s+s',t+t')
\end{equation}
and generate $B(L_2(\R))$ as a von Neumann algebra. Hence the transform
\begin{equation}\label{eq:plane-schrodinger-isomorphism}
 \Phi:\R_\theta^2\longrightarrow B(L_2(\R)),
 \qquad
 \Phi(\lambda_\theta(s,t))=W(s,t)
\end{equation}
extends to a normal $*$-isomorphism. Under this identification, $\lambda_\theta$ becomes the usual Weyl
transform:
\begin{equation}\label{eq:plane-integrated-weyl}
 \Phi(\lambda_\theta(f))
 =\int_{\R^2}f(s,t)W(s,t)\dd s\dd t,
 \qquad f\in\mathcal S(\R^2).
\end{equation}

The isomorphism $\Phi$ also intertwines the canonical traces up to a
fixed normalization. Let $\operatorname{Tr}$ denote the canonical trace
on $B(L_2(\R))$. With the above conventions,
\begin{equation}\label{trace}
 \tau_\theta=\frac{1}{2\pi}\operatorname{Tr}\circ\Phi.
\end{equation}
Consequently, for $1\leq p<\infty$, $\Phi$ identifies
$L_p(\R_\theta^2)$ with the Schatten class $S_p(L_2(\R))$, and
\[
 \|\Phi(x)\|_{S_p}
 =(2\pi)^{1/p}\|x\|_p.
\]

We next identify the canonical derivations in the Schr\"odinger
representation. A direct calculation gives
\[
 [P,W(s,t)]=sW(s,t),\qquad
 [Q,W(s,t)]=tW(s,t).
\]
For a bounded operator $X\in B(L_2(\R))$, commutators with the unbounded
operators $P$ and $Q$ are initially understood in the quadratic-form
sense on $\mathcal S(\R)$, and are said to be bounded when the
corresponding forms extend to bounded operators on $L_2(\R)$.
Comparing the above identities with
\eqref{eq:qes-partial-derivatives}, we obtain
\[
 \Phi(\partial_1 x)=\mathrm i[P,\Phi(x)],
 \qquad
 \Phi(\partial_2 x)=\mathrm i[Q,\Phi(x)],
 \qquad x\in\mathcal S(\R_\theta^2).
\]
Thus, under the Schr\"odinger representation, the first derivation is
implemented by the momentum operator and the second by the position
operator. 

For $u,v\in L_2(\R)$, let $u\otimes v^*$ denote the rank-one operator
$h\mapsto\langle h,v\rangle u$. The Fourier--Wigner transform is defined
initially on rank-one operators by
\begin{equation}\label{eq:plane-weyl-plancherel}
 \mathcal W(u\otimes v^*)(s,t)
 =(2\pi)^{-1/2}\langle u,W(s,t)v\rangle.
\end{equation}
By the orthogonality relations for the Weyl representation,
$\mathcal W$ extends to a unitary
\[
 \mathcal W:S_2(L_2(\R))\longrightarrow L_2(\R^2).
\]
For a measurable symbol $m$ that is finite almost everywhere, let
$M_m$ denote multiplication by $m$ on $L_2(\R^2)$. The associated Fourier--Wigner
multiplier is defined by
\begin{equation}\label{eq:moyal-multiplier-wigner}
 m(D)_{\mathrm M}
 :=\mathcal W^{-1}M_m\mathcal W;
\end{equation}
see, for instance, \cite{Samuelsen2025}.
For $f\in\mathcal S(\R^2)$ such that $mf\in L_2(\R^2)$, this agrees with
the Fourier multiplier transported from $\R_\theta^2$:
\begin{equation}\label{eq:plane-multiplier-unitary}
 m(D)_{\mathrm M}\Phi(\lambda_\theta(f))
 =
 \Phi(\lambda_\theta(mf)).
\end{equation}
Indeed, the trace normalization \eqref{trace} gives
\[
 \mathcal W\Phi(\lambda_\theta(f))
 =(2\pi)^{1/2}f,
 \qquad f\in\mathcal S(\R^2).
\]
Therefore,
\[
 \mathcal W m(D)_{\mathrm M}\Phi(\lambda_\theta(f))
 =(2\pi)^{1/2}mf
 =\mathcal W\Phi(\lambda_\theta(mf)),
\]
and \eqref{eq:plane-multiplier-unitary} follows from the unitarity of
$\mathcal W$. 

The following lemma shows that the boundedness of Calder\'on's first
commutator on $\R_\theta^2$ is equivalent to that of its realization on
the Moyal plane $B(L_2(\R))$. Hence, in the sequel, we may work entirely in the
Schr\"odinger realization of the Moyal plane.

\begin{lemma}\label{lem:plane-commutator-equivalence}
Let $b\in W^{1,\infty}(\R_\theta^2)$ and set
$\mathfrak b=\Phi(b)$. For $j=1,2$, let
\[
 T_{j,\mathrm M}:=m_j(D)_{\mathrm M},
 \qquad
 m_j(\xi)=\frac{\xi_j^2}{|\xi|}
 \quad (\xi\ne0),
 \qquad
 m_j(0)=0.
\]
Then we have
\[
 [b,T_j]\in B(L_2(\R_\theta^2))
 \quad\Longleftrightarrow\quad
 [\mathfrak b,T_{j,\mathrm M}]
 \in B(S_2(L_2(\R))),
\]
and, in this case
\begin{equation}\label{eq:plane-commutator-norm-equality}
 \|[b,T_j]\|_{B(L_2(\R_\theta^2))}
 =
 \|[\mathfrak b,T_{j,\mathrm M}]\|_{B(S_2(L_2(\R)))}.
\end{equation}
\end{lemma}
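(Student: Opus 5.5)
The plan is to produce a single unitary $U:L_2(\R_\theta^2)\to S_2(L_2(\R))$ that intertwines both the left multiplication by $b$ and the multiplier $T_j$ with their Moyal counterparts. Conjugation by $U$ then transports $[b,T_j]$ to $[\mathfrak b,T_{j,\mathrm M}]$ on corresponding dense domains, and the equivalence together with the norm identity \eqref{eq:plane-commutator-norm-equality} follows.

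\emph{Step 1: the unitary.} Set $U:=(2\pi)^{1/2}\Phi$ on $L_2(\R_\theta^2)$. By \eqref{trace},
\[
\|\Phi(x)\|_{S_2}=(2\pi)^{1/2}\|x\|_2,
\]
so $U$ is an isometry. It is onto because $\Phi$ is an isomorphism of the von Neumann algebras and the traces correspond. Equivalently, $U=\mathcal W^{-1}\lambda_\theta^{-1}$ up to the identification $\mathcal W\Phi(\lambda_\theta(f))=(2\pi)^{1/2}f$, and both $\mathcal W$ and $\lambda_\theta$ are unitaries.

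\emph{Step 2: intertwining the multiplier.} I first take $x=\lambda_\theta(f)$ with $f\in\mathcal S(\R^2)$. Then \eqref{eq:plane-multiplier-unitary} gives $UT_jx=T_{j,\mathrm M}Ux$. Since $T_j=\lambda_\theta M_{m_j}\lambda_\theta^{-1}$ and $T_{j,\mathrm M}=\mathcal W^{-1}M_{m_j}\mathcal W$ are both unitarily equivalent to the same multiplication operator, and $\mathcal W U\lambda_\theta=\mathrm{id}$ on $L_2(\R^2)$, I get $UT_jU^{-1}=T_{j,\mathrm M}$ as closed self-adjoint operators, including equality of domains. In particular $U(W^{1,2}(\R_\theta^2))$ is exactly the domain $\mathcal W^{-1}\{g:|\xi|g\in L_2\}$.

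\emph{Step 3: intertwining left multiplication.} Left multiplication by $b$ on $L_2$ is carried to left multiplication by $\mathfrak b$ on $S_2$, since
\[
U(bx)=(2\pi)^{1/2}\Phi(b)\Phi(x)=\mathfrak b\,Ux .
\]
This holds for $x\in L_2\cap L_\infty$ by multiplicativity of $\Phi$, and extends to all of $L_2$ by density because both sides are bounded in $x$. Combining with Step 2, $U[b,T_j]U^{-1}=[\mathfrak b,T_{j,\mathrm M}]$ on $U(W^{1,2}(\R_\theta^2))$. The preceding lemma shows this space is dense and that the commutator is well defined there: $\mathfrak b$ preserves the domain because $b$ preserves $W^{1,2}$.

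\emph{Step 4: conclusion.} Boundedness on one dense domain is then equivalent to boundedness on the other, and since $U$ is unitary the norms of the bounded extensions coincide. The main point of care is Step 2: the paper only gives \eqref{eq:plane-multiplier-unitary} on Schwartz elements, so the domain identification has to come from the spectral (multiplication-operator) descriptions of $T_j$ and $T_{j,\mathrm M}$ rather than from a closure argument alone. That these descriptions agree is again the identity $\mathcal W U\lambda_\theta=\mathrm{id}$, which extends from $\mathcal S$ to $L_2$ by continuity.
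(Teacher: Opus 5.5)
Your argument is correct and follows the paper's proof: transport $[b,T_j]$ to $[\mathfrak b,T_{j,\mathrm M}]$ via $\Phi$, using \eqref{eq:plane-multiplier-unitary}, multiplicativity of $\Phi$, and the trace normalization \eqref{trace}. Your Step 2 bookkeeping, which identifies $T_j$ and $T_{j,\mathrm M}$ as closed operators through $\mathcal W U\lambda_\theta=\mathrm{id}$, makes explicit the extension of \eqref{eq:plane-multiplier-unitary} from Schwartz elements to $W^{1,2}$ that the paper uses without comment.

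One small slip: since $\|\Phi(x)\|_{S_2}=(2\pi)^{1/2}\|x\|_2$, the unitary is $U=(2\pi)^{-1/2}\Phi$, not $(2\pi)^{1/2}\Phi$; your identity $\mathcal W U\lambda_\theta=\mathrm{id}$ already requires this normalization. The scalar factor is harmless for conjugating the commutator.
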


\begin{proof}
For $x\in W^{1,2}(\R_\theta^2)$, the Leibniz rule gives
$bx\in W^{1,2}(\R_\theta^2)$. Applying
\eqref{eq:plane-multiplier-unitary} with $m=m_j$, we obtain
\[
 \begin{aligned}
 \Phi([b,T_j]x)
 &=\Phi(bT_jx)-\Phi(T_j(bx))\\
 &=\mathfrak b\,T_{j,\mathrm M}\Phi(x)
   -T_{j,\mathrm M}\bigl(\mathfrak b\Phi(x)\bigr)\\
 &=[\mathfrak b,T_{j,\mathrm M}]\Phi(x).
 \end{aligned}
\]
Thus the two commutators are intertwined by $\Phi$. Moreover, by
\eqref{trace},
\[
 \|\Phi(x)\|_{S_2}=(2\pi)^{1/2}\|x\|_2.
\]
Hence $[b,T_j]$ extends to a bounded operator on $L_2(\R_\theta^2)$ if and only if
$[\mathfrak b,T_{j,\mathrm M}]$ extends to a bounded operator on
$S_2(L_2(\R))$.
Therefore,
\[
 \|[b,T_j]\|_{B(L_2(\R_\theta^2))}
 =
 \|[\mathfrak b,T_{j,\mathrm M}]\|_{B(S_2(L_2(\R)))}.
\]
\end{proof}

For notational convenience, we henceforth use the same notation $T_j$ for the operators on
$\R_\theta^2$ and their realizations on $B(L_2(\R))$, suppressing the subscript
$\mathrm M$. In particular, on $B(L_2(\R))$, we write
\[
 T_2=\mathcal W^{-1}M_{m_2}\mathcal W,
\]
where
\[
 m_2(s,t)
 =\frac{t^2}{\sqrt{s^2+t^2}},
 \qquad (s,t)\ne(0,0),\qquad m_2(0,0)=0.
\]
By Lemma \ref{lem:plane-commutator-equivalence}, it therefore remains to construct a symbol $\mathfrak b\in B(L_2(\R))$ such that
\[
 \|[P,\mathfrak b]\|_{B(L_2(\R))}<\infty,
 \qquad
 \|[Q,\mathfrak b]\|_{B(L_2(\R))}<\infty,
\]
while the commutator $[\mathfrak b,T_2]$ does not extend to a bounded operator on
$S_2(L_2(\R))$. 

\subsection{Construction of the counterexample on the Moyal plane}\label{s3.3}
We now construct the desired symbol $\mathfrak b$. The elementary estimate
\begin{equation}\label{eq:plane-absolute-approximation}
 |m_2(s,r+u)-|r||\leq |s|+|u|
\end{equation}
will be used repeatedly. Indeed,
\[
 0\leq |t|-m_2(s,t)\leq |s|,
\]
and hence \eqref{eq:plane-absolute-approximation} follows from
\[
 \big||r+u|-|r|\big|\leq |u|.
\]

Fix $a>0$ and a real-valued function
$\varphi\in C_c^\infty(\R)$ such that
\[
 \operatorname{supp}(\varphi)\subset(-a,a),
 \qquad
 \|\varphi\|_2=1.
\]
For $q\in\R$, write $\varphi_q(x)=\varphi(x-q)$, and define
\begin{equation}\label{eq:plane-packet-profile}
 h(s,u)
 :=e^{-\mathrm i su/2}
   \mathcal W(\varphi\otimes\varphi^*)(s,u)=(2\pi)^{-1/2}
 \int_\R e^{-\mathrm i s y}\varphi(y-u)\varphi(y)\,\dd y.
\end{equation}
Then $h(s,u)=0$ whenever $|u|\geq2a$, and $h$ decays rapidly in $s$,
uniformly for $u$ in its support. Moreover, by the unitarity of $\mathcal W$ and the regularity of $h$,
\begin{equation}\label{eq:plane-profile-moments}
 \int_{\R^2}|h(s,u)|^2\,\dd s\dd u=1,
 \qquad
 C_\varphi
 :=\int_{\R^2}(|s|+|u|)|h(s,u)|^2\,\dd s\dd u<\infty.
\end{equation}
Let $\mathsf D_n,\mathsf A_n$ be the matrices given by
Corollary~\ref{cor:scaled-obstruction}, and write
\[
 \mathsf D_n
 =\operatorname{diag}(\lambda_{n,1},\ldots,\lambda_{n,2N_n}).
\]
Thus
\begin{equation}\label{eq:plane-matrix-bounds}
 \|[\mathsf D_n,\mathsf A_n]\|\leq2^{-n},
 \qquad
 \|[|\mathsf D_n|,\mathsf A_n]\|\geq n.
\end{equation}
Choose $L_n>0$ sufficiently large so that
\begin{equation}\label{eq:plane-Ln}
 L_n\min_{k\ne l}|\lambda_{n,k}-\lambda_{n,l}|>4a,
 \qquad
 \frac{\|\mathsf A_n\|}{L_n}\leq2^{-n}.
\end{equation}
Next choose $z_n\in\R$ recursively so that all intervals
\[
 [q_{n,k}-a,q_{n,k}+a],
 \qquad
 q_{n,k}=z_n+L_n\lambda_{n,k},
\]
are pairwise disjoint, both within each generation and across different
generations. Consequently, the supports of the functions
$\varphi_{q_{n,k}}$ are pairwise disjoint, and
$\{\varphi_{q_{n,k}}\}_{n,k}$ is an orthonormal family in $L_2(\R)$.

Define the isometry
\[
 V_n:\C^{2N_n}\longrightarrow L_2(\R),
 \qquad
 V_ne_k=\varphi_{q_{n,k}},
\]
and set
\begin{equation}\label{eq:plane-blocks}
 G_n=L_n^{-1}\mathsf A_n,\qquad
 \mathfrak b_n=V_nG_nV_n^*,\qquad
 \mathfrak b=\sum_{n\geq1}\mathfrak b_n.
\end{equation}
The operators $\mathfrak b_n$ are self-adjoint and finite rank, and act on
mutually orthogonal subspaces of $L_2(\R)$. Moreover,
\[
 \|\mathfrak b_n\|=\|G_n\|
 \leq2^{-n}.
\]
Hence $\mathfrak b$ is the norm limit of finite-rank operators and is therefore
self-adjoint and compact.

Now we show that $\|\partial_j \mathfrak b\|<\infty,j=1,2$. Put
\[
 E_n^Qe_k=(x-q_{n,k})\varphi_{q_{n,k}},\qquad
 E_n^Pe_k=P\varphi_{q_{n,k}}.
\]
Disjointness of the packet supports makes the columns of each
of $E_n^Q$ and $E_n^P$ orthogonal. Hence
$\|E_n^Q\|=\|x\varphi\|_2$ and
$\|E_n^P\|=\|P\varphi\|_2$. Moreover, a direct calculation shows that
\[
 QV_n=V_n(z_n \mathbf{Id}_{2N_n}+L_n\mathsf D_n)+E_n^Q,
 \qquad PV_n=E_n^P.
\]
Hence we obtain
\begin{align}
 [Q,\mathfrak b_n]&=V_n[\mathsf D_n,\mathsf A_n]V_n^*
           +E_n^QG_nV_n^*-V_nG_n(E_n^Q)^*,\label{eq:plane-Q-block}\\
 [P,\mathfrak b_n]&=E_n^PG_nV_n^*-V_nG_n(E_n^P)^*.
 \label{eq:plane-P-block}
\end{align}
Consequently,
\begin{equation}\label{eq:plane-derivative-bounds}
 \|[Q,\mathfrak b_n]\|\leq (1+2\|x\varphi\|_2)2^{-n},\qquad
 \|[P,\mathfrak b_n]\|\leq2\|P\varphi\|_2\,2^{-n}.
\end{equation}
The series of both derivatives converge in norm. Closedness of $\partial_1,\partial_2$ on $K(L_2(\R))$ therefore shows that
$\mathfrak b=\mathfrak b^*\in K(L_2(\R))$ and $\partial_1\mathfrak b,\partial_2\mathfrak b\in K(L_2(\R)).$

We next show that $[\mathfrak b,T_2]$ is unbounded on $S_2(L_2(\R))$. Define isometries
\begin{equation}\label{eq:plane-Jn}
 J_n:\C^{2N_n}\longrightarrow S_2(L_2(\R)),\qquad
 J_ne_k=\varphi_{q_{n,k}}\otimes\varphi_{z_n}^*.
\end{equation}
The packet supports give the exact intertwining identity
\begin{equation}\label{eq:plane-exact-left-action}
 \mathfrak bJ_n=J_nG_n,
 \qquad J_n^*\mathfrak b=G_nJ_n^*.
\end{equation}
A change of variables in \eqref{eq:plane-weyl-plancherel} yields
\begin{equation}\label{eq:plane-packet-transform}
 \mathcal W(\varphi_q\otimes\varphi_z^*)(s,t)
 =e^{-\mathrm is(q-t/2)}h(s,t-q+z).
\end{equation}
For each fixed $n$, these transforms have pairwise disjoint $t$-supports.
Hence 
\[
\begin{aligned}
&\left\langle
T_2\bigl(\varphi_{q_{n,k}}\otimes\varphi_{z_n}^*\bigr),
\varphi_{q_{n,l}}\otimes\varphi_{z_n}^*
\right\rangle_{S_2}\\
&\qquad=
\int_{\R^2}
m_2(s,t)\,
e^{-\mathrm is(q_{n,k}-t/2)}
h(s,t-q_{n,k}+z_n)\,
\overline{
e^{-\mathrm is(q_{n,l}-t/2)}
h(s,t-q_{n,l}+z_n)
}
\,\dd s\dd t\\
&\qquad=
\delta_{kl}
\int_{\R^2}
m_2(s,u+L_n\lambda_{n,k})
|h(s,u)|^2\,\dd s\dd u.
\end{aligned}
\]
and therefore
$$J_n^*T_2J_n=\diag\left(\beta_{n,1},\ldots,\beta_{n,2N_n}\right)$$ with
\[
\beta_{n,k}=\int_{\R^2}m_2(s,u+L_n\lambda_{n,k})|h(s,u)|^2\,\dd s\dd u,\qquad k=1,\ldots,2N_n.
\]
From \eqref{eq:plane-absolute-approximation} and
\eqref{eq:plane-profile-moments}, we can find a $2N_n\times2N_n$ matrix $F_n$ such that
\begin{equation}\label{eq:plane-compression}
 J_n^*T_2J_n=L_n|\mathsf D_n|+F_n,
 \qquad F_n=F_n^*,\quad \|F_n\|\leq C_\varphi.
\end{equation}
By
\eqref{eq:plane-exact-left-action},
\begin{align}
 J_n^*[\mathfrak b,T_2]J_n
 &=[G_n,L_n|\mathsf D_n|+F_n]
   =[\mathsf A_n,|\mathsf D_n|]+[G_n,F_n].
 \label{eq:plane-commutator-compression}
\end{align}
Therefore,
\begin{equation}\label{eq:plane-blowup}
 \|J_n^*[\mathfrak b,T_2]J_n\|
 \geq n-2C_\varphi2^{-n}\longrightarrow\infty,
\end{equation}
which proves that $[\mathfrak b,T_2]$ is unbounded.

\subsection{Reduction to arbitrary deformation}
We now show that the same construction applies to every nonzero real
antisymmetric matrix $\theta$, including the degenerate case. Suppose
that
\(
 \operatorname{rank}\theta=2r>0,
\)
and put
\[
 J=
 \begin{pmatrix}
 0&1\\
 -1&0
 \end{pmatrix},
 \qquad
 \theta_0=J^{\oplus r}\oplus0_{d-2r}.
\]
There exists at least one invertible real $d\times d$ matrix
$T$ such that
\(
 \theta=T^{\mathsf T}\theta_0T.
\)
Write
\[
 T^{-1}(s_1,t_1,\ldots,s_r,t_r,\eta)
 =
 \sum_{a=1}^r(s_au_a+t_aw_a)+B\eta,
 \qquad
 \eta\in\R^{d-2r}.
\]

\begin{lemma}\label{T}
There exists an invertible real matrix $T$ satisfying
$\theta=T^{\mathsf T}\theta_0T$ such that the second column $w_1=((w_1)_1,\ldots,(w_1)_d)$ of $T^{-1}$ satisfies 
\[
 (w_1)_j\ne0,\qquad 1\leq j\leq d.
\]
\end{lemma}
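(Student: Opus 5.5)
Start from an arbitrary invertible $T_0$ with $\theta=T_0^{\mathsf T}\theta_0T_0$, which exists by the normal form for real antisymmetric matrices. Then modify $T_0$ by elements of the stabilizer of $\theta_0$: if $S$ is invertible with $S^{\mathsf T}\theta_0S=\theta_0$, then $T=ST_0$ still satisfies $T^{\mathsf T}\theta_0T=T_0^{\mathsf T}S^{\mathsf T}\theta_0ST_0=\theta$. Moreover $T^{-1}=T_0^{-1}S^{-1}$, so the second column of $T^{-1}$ is
\[
w_1=T_0^{-1}S^{-1}e_2 .
\]
The task is therefore to show that the vectors $S^{-1}e_2$, with $S$ ranging over the stabilizer group $G$ of $\theta_0$, are rich enough that $T_0^{-1}S^{-1}e_2$ can be chosen with no zero coordinate.

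The plan is to show that the orbit $\{S^{-1}e_2:S\in G\}$ contains a nonempty open subset of $\R^d$, or at least has Zariski dense span. Concretely, $G$ contains the symplectic group $\mathrm{Sp}(2r,\R)$ acting on the first $2r$ coordinates. It also contains the block maps
\[
\begin{pmatrix}I_{2r}&0\\ C&D\end{pmatrix},\qquad C\ \text{arbitrary},\quad D\in GL_{d-2r},
\]
since the last $d-2r$ coordinates lie in the radical of $\theta_0$. The symplectic group acts transitively on $\R^{2r}\setminus\{0\}$. Combined with the lower-triangular block maps, this shows that $\{S e_2:S\in G\}$ equals $(\R^{2r}\setminus\{0\})\times\R^{d-2r}$, which is open and dense. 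Since $G$ is a group, the same set arises from $S^{-1}$.

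The set of $v\in\R^d$ with $(T_0^{-1}v)_j\neq0$ for all $j$ is the complement of finitely many hyperplanes $\{v:\ (T_0^{-1}v)_j=0\}$. Each of these is a proper hyperplane because $T_0^{-1}$ is invertible, so its $j$th row is nonzero. A finite union of proper hyperplanes has empty interior, so it cannot cover the open set $(\R^{2r}\setminus\{0\})\times\R^{d-2r}$. Pick $v$ in that set outside all the hyperplanes, choose $S\in G$ with $S^{-1}e_2=v$, and set $T=ST_0$. Then $w_1=T_0^{-1}v$ has all coordinates nonzero.

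The only step needing care is checking that the listed block maps preserve $\theta_0$, together with transitivity of $\mathrm{Sp}(2r,\R)$ on nonzero vectors. Transitivity is standard: any nonzero vector can be completed to a symplectic basis. The remaining steps are elementary linear algebra.
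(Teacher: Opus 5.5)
Your argument is correct, but it is organized differently from the paper's.

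The paper works directly in $(\R^d,\theta)$. It first chooses $w_1$ outside the null set $\ker\theta\cup\bigcup_j\{x_j=0\}$ and picks $u_1$ with $u_1^{\mathsf T}\theta w_1=1$. It then completes $(u_1,w_1)$ to a $\theta$-adapted basis, using symplectic pairs in the $\theta$-orthogonal complement $F$ of $\operatorname{span}\{u_1,w_1\}$ together with a basis of $\ker\theta$. Finally it takes $T$ to be the inverse of the resulting basis matrix.

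You instead fix one normal form $T_0$ and move through the solutions $T=ST_0$, with $S$ in the stabilizer $G$ of $\theta_0$. You compute the orbit of $e_2$ from the transitivity of $\mathrm{Sp}(2r,\R)$ together with the shears $\begin{pmatrix}I_{2r}&0\\ C&D\end{pmatrix}$ into the radical.

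The two proofs share the key fact: a vector off the radical can be the second member of a symplectic pair and completed to an adapted basis. They also share the genericity step: avoid finitely many proper hyperplanes, respectively a null set.

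The paper's route is shorter, because the radical components of $w_1$ are handled automatically once the splitting is chosen after $w_1$. Yours needs the extra shear maps, but it gives a cleaner global picture. All admissible $T$ form a single $G$-orbit, and the achievable second columns of $T^{-1}$ are exactly $T_0^{-1}\bigl((\R^{2r}\setminus\{0\})\times\R^{d-2r}\bigr)=\R^d\setminus\ker\theta$.

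One small point: you write that the orbit of $e_2$ \emph{equals} $(\R^{2r}\setminus\{0\})\times\R^{d-2r}$, but you only prove the inclusion $\supseteq$. The reverse inclusion holds because every $S\in G$ preserves the radical of $\theta_0$. Only $\supseteq$ is used, so the proof is unaffected.
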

\begin{proof}
  We identify $\theta$ with its associated bilinear form. Since $\theta\ne0$, $$\mu\left(\ker\theta\cup
 \bigcup_{j=1}^d\{x\in\R^d:x_j=0\}\right)=0.$$
    Hence we may choose
\[
 w_1\notin
 \ker\theta\cup
 \bigcup_{j=1}^d\{x\in\R^d:x_j=0\}.
\]
Since $w_1\notin\ker\theta$, there exists $u_1\in\R^d$ such that
\(
 u_1^{\mathsf T}\theta w_1=1.
\)
Let
\[
 F=\{v\in\R^d:u_1^{\mathsf T}\theta v
                    =w_1^{\mathsf T}\theta v=0\}.
\]
Then $\R^d=\operatorname{span}\{u_1,w_1\}\oplus F$, and  $\theta\mid F$ has rank $2r-2$ and radical $\ker\theta$. We may choose $r-1$ symplectic pairs $(u_2,w_2),\dots,(u_r,w_r)$ in $F$ such that
$$
 u_j^{\mathsf T}\theta w_k=\delta_{jk},
 \qquad
 u_j^{\mathsf T}\theta u_k=0,
 \qquad
 w_j^{\mathsf T}\theta w_k=0,\qquad  2\le j,k\le r.
$$
and complete them by a basis of $\ker\theta$. Together with
$u_1,w_1$, these vectors form the columns of an invertible matrix
$S$ satisfying $S^{\mathsf T}\theta S=\theta_0$. Taking
$T=S^{-1}$ proves the assertion.
\end{proof}

Let $T$ be the matrix in Lemma \ref{T}.
By \cite[Proposition 2.3]{GJM}, the change of variables
\[
 \Psi_T:\R_\theta^d\longrightarrow\R_{\theta_0}^d,
 \qquad
 \Psi_T(\lambda_\theta(\xi))
 =\lambda_{\theta_0}(T\xi),
\]
extends to a normal $*$-isomorphism. Combining this with the identification
$\R_J^2\simeq B(L_2(\R))$, we obtain a normal $*$-isomorphism
\begin{equation}\label{eq:reduction-tensor-isomorphism}
\begin{aligned}
     &\Phi_T:\R_\theta^d
 \longrightarrow
 \overline{\bigotimes}_{a=1}^r B(L_2(\R))
 \,\overline\otimes\,
 L_\infty(\R^{d-2r}),\\ 
 &\Phi_T(\lambda_\theta(\xi))(y)=
  e^{\mathrm i\eta\cdot y}W(s_1,t_1)\otimes\cdots\otimes W(s_r,t_r)
\end{aligned}
\end{equation}
with $T\xi=(s_1,t_1,\ldots,s_r,t_r,\eta).$
Under this isomorphism, the canonical trace is given by
\[
 \tau_\theta(x)
 =
 \frac{|\det T|}{(2\pi)^{d-r}}
 \left(
  \operatorname{Tr}^{\otimes r}
  \otimes
  \int_{\R^{d-2r}}
 \right)\Phi_T(x).
\]

Let $\delta_1,\ldots,\delta_d$ denote the canonical derivations on the
right-hand side of \eqref{eq:reduction-tensor-isomorphism}. Explicitly, for $1\leq a\leq r$, they are given on the $a$th Moyal factor by
\[
 \delta_{2a-1}
 =\mathrm i[P_a,\,\cdot\,],
 \qquad
 \delta_{2a}
 =\mathrm i[Q_a,\,\cdot\,],
\]
while on the commutative factor,
\[
 \delta_{2r+\ell}
 =\partial_{y_\ell},
 \qquad 1\leq\ell\leq d-2r.
\]
Then the
original derivations $\partial_k$ on $\R_\theta^d$ become
\begin{equation}\label{eq:reduction-derivations}
 \Phi_T(\partial_kx)
 =
 \sum_{\ell=1}^d
 (T^{-1})_{k\ell}\,
 \delta_\ell\Phi_T(x),
 \qquad 1\leq k\leq d.
\end{equation}

To describe the transformed Calder\'on commutators, let
$\mathfrak b$ be the counterexample constructed in
Section~\ref{s3.3} and $1$ be the unit of $L_\infty(\R^{d-2r})$, and define
\[
 b
 =
 \Phi_T^{-1}\bigl(\mathfrak b^{\otimes r}\otimes1\bigr).
\]
Since $\mathfrak b$ is self-adjoint and has bounded first-order
derivations, it follows from \eqref{eq:reduction-derivations} that
$b$ is self-adjoint and belongs to $W^{1,\infty}(\R_\theta^d)$. More
precisely,
\[
 \sup_{1\leq k\leq d}\|\partial_k b\|
 \lesssim_T
 \|\mathfrak b\|^{r-1}
 \bigl(
   \|[P,\mathfrak b]\|
   +\|[Q,\mathfrak b]\|
 \bigr).
\]
On the other hand, under the tensor-product realization
\eqref{eq:reduction-tensor-isomorphism}, the corresponding Fourier
transform is given by
\(
 \bigotimes_{i=1}^r \mathcal W
 \otimes \mathcal F_{\R^{d-2r}},
\)
and $\Phi_T T_j\Phi_T^{-1}$ is the Fourier multiplier with symbol
\begin{equation}\label{eq:reduction-multiplier}
 (s_1,t_1,\ldots,s_r,t_r,\eta)
 \longmapsto
 m_j\left(
 \sum_{a=1}^r(s_au_a+t_aw_a)+B\eta
 \right).
\end{equation}
Since the function
\[
 m_j(\xi)=\frac{\xi_j^2}{|\xi|},
 \qquad \xi\ne0,\qquad m_j(0)=0
\]
has Lipschitz constant at most $3$ and satisfies
\[
 m_j(\rho w_1)
 =
 \frac{(w_1)_j^2}{|w_1|}\,|\rho|,
\]
we obtain, for all $\rho,s,v\in\R$,
\begin{align}
&\left|
 m_j\left(
 su_1+(\rho+v)w_1
 +\sum_{a=2}^r(s_au_a+t_aw_a)
 +B\eta
 \right)
 -
 \frac{(w_1)_j^2}{|w_1|}|\rho|
\right|
\notag\\
&\qquad\leq
3\left(
 |u_1||s|
 +|w_1||v|
 +\sum_{a=2}^r
   \bigl(|u_a||s_a|+|w_a||t_a|\bigr)
 +\|B\||\eta|
\right).
\label{eq:reduction-symbol-estimate}
\end{align}

Fix $\psi\in\mathcal S(\R^{d-2r})$ with
\(
 \|\psi\|_{L_2(\R^{d-2r})}=1.
\)
When $d=2r$, the factor $\psi$ is absent. Consider the isometry
\[
 E_\psi:\C\longrightarrow L_2(\R^{d-2r}),
 \qquad
 c\longmapsto c\psi,
\]
define
\[
 \widetilde J_n
 =
 J_n\otimes J_1^{\otimes(r-1)}\otimes E_\psi.
\]
Since
\[
 \Phi_T(b)=\mathfrak b^{\otimes r}\otimes1,
\]
the exact block action \eqref{eq:plane-exact-left-action} gives
\begin{align}\label{BJ}
     \Phi_T(b)\widetilde J_n
 =
 \widetilde J_n
 \bigl(G_n\otimes G_1^{\otimes(r-1)}\otimes 1\bigr),\qquad
 \widetilde J_n^*\Phi_T(b)
 =
 \bigl(G_n\otimes G_1^{\otimes(r-1)}\otimes 1\bigr)
 \widetilde J_n^*.
\end{align}
By
\eqref{eq:reduction-symbol-estimate}, arguing as in \eqref{eq:plane-compression}, we obtain operators $F_{n,j}$ and a
constant $C$, independent of $n$ and $j$, such that
\begin{equation}\label{eq:reduction-compression}
 \widetilde J_n^*
 \Phi_TT_j\Phi_T^{-1}
 \widetilde J_n
 =
 \frac{(w_1)_j^2L_n}{|w_1|}
 \bigl(
 |\mathsf D_n|\otimes \mathbf{Id}_{2N_1}^{\otimes(r-1)}\otimes 1
 \bigr)
 +F_{n,j},
 \qquad
 \|F_{n,j}\|\leq C,
\end{equation}
Set
\[
 H_n:=G_n\otimes G_1^{\otimes(r-1)}\otimes 1.
\]
Using \eqref{BJ}, \eqref{eq:plane-matrix-bounds} and \eqref{eq:plane-Ln}, we obtain
\begin{align*}
 \left\|
 \widetilde J_n^*
 [\Phi_T(b),\Phi_TT_j\Phi_T^{-1}]
 \widetilde J_n
 \right\|
 &=\left\|
 \left[H_n,
 \frac{(w_1)_j^2L_n}{|w_1|}
 \bigl(|\mathsf D_n|\otimes \mathbf{Id}_{2N_1}^{\otimes(r-1)}\otimes 1\bigr)+F_{n,j}
 \right]\right\|\\
 &\geq
 \|G_1\|^{r-1}
 \left(
 \frac{(w_1)_j^2}{|w_1|}
 \|[\mathsf A_n,|\mathsf D_n|]\|
 -2C\|G_n\|
 \right)\\
 &\geq
 \|G_1\|^{r-1}
 \left(
 \frac{(w_1)_j^2}{|w_1|}n
 -2C2^{-n}
 \right)
 \longrightarrow\infty,
\end{align*}
since $G_1\ne0$ by \eqref{eq:plane-matrix-bounds} and
$(w_1)_j\ne0$ by Lemma~\ref{T}. Since $\Phi_T$ induces an
isomorphism of the corresponding $L_2$-spaces up to a fixed trace
normalization, it follows that $[b,T_j]$ is unbounded for every
$j=1,\ldots,d$.
This proves Theorem~\ref{thm:euclidean}.

\section{Proof of Theorem~\ref{thm:torus}}\label{sec:torus}
In this section, we prove Theorem~\ref{thm:torus}.
We first construct a counterexample on the two-dimensional
irrational rotation algebra, using disjoint orbit towers to realize the finite-dimensional obstruction from Section~\ref{s2}. The construction yields a self-adjoint symbol with bounded first derivatives but an unbounded Calder\'on's first commutator.
We then embed an irrational rotation algebra into any
higher-dimensional quantum torus with an irrational deformation
parameter, choosing an appropriate integer sublattice so that
the same symbol produces unbounded commutators in every coordinate.

\subsection{Quantum tori and their differential structure}

We recall the definitions needed for the quantum torus arguments, which are similar to those in the Euclidean case; see
\cite{ChenXuYin2013,XiongXuYin2018}. Let $\theta$ be a real
antisymmetric $d\times d$ matrix. On $\ell_2(\Z^d)$, with standard
basis $\{\varepsilon_n:n\in\Z^d\}$, define
\[
 U_j\varepsilon_n
 =\exp\left(2\pi\mathrm i\sum_{k<j}\theta_{jk}n_k\right)
       \varepsilon_{n+e_j},
 \qquad 1\leq j\leq d,
\]
where $e_j$ denotes the $j$th coordinate vector in $\Z^d$. These
unitaries satisfy
\[
 U_kU_l=e^{2\pi\mathrm i\theta_{kl}}U_lU_k.
\]
The quantum torus is the finite von Neumann algebra
\[
 \T_\theta^d=\{U_1,\ldots,U_d\}''
 \subset B(\ell_2(\Z^d)).
\]
Its canonical normal faithful trace $\tau$ is determined by
\[
 \tau(U^n)=\delta_{n,0},
 \qquad
 U^n=U_1^{n_1}\cdots U_d^{n_d},
 \qquad n\in\Z^d.
\]
This trace gives rise to the associated noncommutative $L_p$-spaces
$L_p(\T_\theta^d)$, $1\leq p<\infty$, while
$L_\infty(\T_\theta^d):=\T_\theta^d$.
The family $\{U^n:n\in\Z^d\}$ forms an orthonormal basis of
$L_2(\T_\theta^d)$. Writing
\[
 \widehat x(n)=\tau\bigl((U^n)^*x\bigr),
\]
one has the Plancherel identity
\begin{equation}\label{eq:torus-fourier-plancherel}
 \|x\|_2^2=\sum_{n\in\Z^d}|\widehat x(n)|^2.
\end{equation}
The smooth subalgebra $C^\infty(\T_\theta^d)$ consists of $x\in\T_\theta^d$
whose Fourier coefficients $\widehat x(n)$ are rapidly decreasing, namely,
\[
 \sup_{n\in\Z^d}(1+|n|_1)^N|\widehat x(n)|<\infty
 \qquad \forall N\geq0.
\]

The canonical torus action is the trace-preserving normal action
defined by
\[
 \alpha_s(U^n)=e^{\mathrm i n\cdot s}U^n,
 \qquad s\in\T^d.
\]
It is strongly continuous on $L_2(\T_\theta^d)$ and weak-$*$ continuous
on $\T_\theta^d$. On $C^\infty(\T_\theta^d)$, the canonical partial
derivatives are given by
\[
 \partial_kx
 =\sum_{n\in\Z^d}\mathrm i n_k\widehat x(n)U^n,
 \qquad 1\leq k\leq d.
\]
For a multi-index $\beta\in\mathbb N^d$, we write
\[
 \partial^\beta
 =\partial_1^{\beta_1}\cdots\partial_d^{\beta_d}.
\]

For an integer $r\geq0$ and $1\leq p\leq\infty$, the Sobolev space
$W^{r,p}(\T_\theta^d)$ and its norm is defined by
\begin{equation}\label{eq:torus-first-sobolev}
 \begin{aligned}
 W^{r,p}(\T_\theta^d)
   &=\{x\in L_p(\T_\theta^d):
        \partial^\beta x\in L_p(\T_\theta^d)
        \text{ for }|\beta|_1\leq r\},\\
 \|x\|_{W^{r,p}}
   &=\sum_{|\beta|_1\leq r}\|\partial^\beta x\|_p.
 \end{aligned}
\end{equation}
In particular, $W^{1,\infty}(\T_\theta^d)$ agrees with the Lipschitz
class defined in \eqref{eq:W1infty-definition2}. Moreover, regarding
each $\partial_k$ as its closed realization on $L_2(\T_\theta^d)$,
\[
 W^{1,2}(\T_\theta^d)
 =\bigcap_{k=1}^d\operatorname{Dom}(\partial_k).
\]

For a symbol $m:\Z^d\to\C$, the Fourier multiplier $m(D)$ is defined by
\[
 m(D)\left(\sum_{n\in\Z^d}a_nU^n\right)
 =\sum_{n\in\Z^d}m(n)a_nU^n.
\]
As in the Euclidean setting, put
\[
 L=-\sum_{k=1}^d\partial_k^2,\qquad
 R_j=\partial_jL^{-1/2},\qquad
 T_j=-\partial_jR_j.
\]
Then $T_j$ is the Fourier multiplier with symbol
$$m_j(n)=\frac{n_j^2}{|n|},n\ne 0,\qquad m_j(0)=0.$$
For $b\in W^{1,\infty}(\T_\theta^d)$, the same
Leibniz-rule argument as in the Euclidean case shows that
Calder\'on's first commutator $[b,T_j]$ is well defined on $W^{1,2}(\T_\theta^d)$.

\subsection{The two-dimensional construction}\label{s4.2}
We first consider $d=2$, where
\[
 \theta=
 \begin{pmatrix}
  0&-\vartheta\\
  \vartheta&0
 \end{pmatrix},
 \qquad
 \text{for some}\ \vartheta\in\R\setminus\mathbb Q.
\]
Then the two-dimensional quantum torus can be identified with the
irrational rotation von Neumann algebra
\[
 \mathsf A_\vartheta=\{\mathsf U,\mathsf V\}'',
 \qquad
 \mathsf V\mathsf U
 =e^{2\pi\mathrm i\vartheta}\mathsf U\mathsf V,
 \qquad
 \tau(\mathsf U^m\mathsf V^p)
 =\delta_{m,0}\delta_{p,0}.
\]
The derivatives are given by
\[
 \delta_1(\mathsf U^m\mathsf V^p)
 =\mathrm i m\mathsf U^m\mathsf V^p,
 \qquad
 \delta_2(\mathsf U^m\mathsf V^p)
 =\mathrm i p\mathsf U^m\mathsf V^p.
\]
The abelian von Neumann algebra $\{\mathsf U\}''$ is identified with
$L_\infty(\T)$ through the spectral calculus of $\mathsf U$, with the
canonical trace corresponding to the integration $\int_{\T}$. For
$f\in L_\infty(\T)$, we write $f(\mathsf U)$ for the corresponding functional
calculus and define 
\[
 \sigma_pf(x)=f(x+2\pi p\vartheta).
\]
Then $ \operatorname{supp}(\sigma_pf)
 =\operatorname{supp}(f)-2\pi p\vartheta$ and
\[
 \mathsf V^pf(\mathsf U)\mathsf V^{-p}=\sigma_pf(\mathsf U).
\]

The following elementary packing observation allows us to choose the
spatial profiles before selecting the large Fourier frequencies.

\begin{lemma}\label{lem:orbit-packing}
Assume that $\vartheta\in\R\setminus\mathbb Q.$ Let $q_1,\ldots,q_M$ be distinct integers, and let $\Omega$ be a nonempty
proper open arc of $\T$. There exist $\eta>0$ and an unbounded set
$\mathcal L\subset\mathbb N$ such that, for every $\ell\in\mathcal L$,
there is a compact arc $I_\ell$ of length $\eta$ for which the arcs
\[
 I_\ell-2\pi\ell q_r\vartheta,\qquad 1\leq r\leq M,
\]
are pairwise disjoint and contained in $\Omega$.
\end{lemma}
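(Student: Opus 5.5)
Write $\T=\R/(2\pi\Z)$ and put $\alpha:=2\pi\vartheta$, so that the shifts in question are $-\ell q_r\alpha$. The idea is to choose $\ell$ so that $\ell\alpha$ is very close to $0$ modulo $2\pi$, but not too close. Then the $M$ points $-\ell q_r\alpha$ are pairwise separated by a definite amount, while all of them lie in a short arc near $0$.

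Let $Q:=\max_r|q_r|$. Since the $q_r$ are distinct integers, $|q_r-q_s|\ge1$ for $r\ne s$. Let $|\Omega|<2\pi$ be the length of $\Omega$ and fix a small $\epsilon>0$, for instance $\epsilon:=|\Omega|/(8Q+8)$.

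By irrationality of $\vartheta$, the sequence $\{\ell\alpha\}_{\ell\ge1}$ is dense in $\T$; this follows from Kronecker/Weyl equidistribution, or from pigeonhole on the orbit. Hence the set
\[
\mathcal L:=\{\ell\in\N:\ \ell\alpha\in(\epsilon/2,\epsilon)\ \mathrm{mod}\ 2\pi\}
\]
is infinite, hence unbounded. For $\ell\in\mathcal L$, let $\delta_\ell\in(\epsilon/2,\epsilon)$ be the representative of $\ell\alpha$.

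The shifted points are then $-q_r\delta_\ell$ modulo $2\pi$. As real numbers they lie in $[-Q\epsilon,Q\epsilon]$, an interval of length $2Q\epsilon<2\pi$. Consequently their distances on $\T$ equal their real distances, which are
\[
|q_r-q_s|\,\delta_\ell\ \ge\ \epsilon/2 \qquad (r\ne s).
\]
Set $\eta:=\epsilon/4$, which is independent of $\ell$.

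Let $c$ be the midpoint of $\Omega$, and take $I_\ell:=[c-\eta/2,\,c+\eta/2]$. Then each translate $I_\ell-\ell q_r\alpha$ is an arc of length $\eta$ centred at $c-q_r\delta_\ell$:
\begin{itemize}
\item Distinct centres are at least $\epsilon/2>\eta$ apart, so the arcs are pairwise disjoint.
\item Every centre lies within $Q\epsilon$ of $c$, so every arc lies within $Q\epsilon+\eta<|\Omega|/2$ of $c$, hence inside $\Omega$.
\end{itemize}
This gives the required $\eta$, $\mathcal L$ and $I_\ell$.

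The only real point is locating $\ell\alpha$ in a prescribed small window $(\epsilon/2,\epsilon)$ for infinitely many $\ell$. This is exactly density of the orbit of an irrational rotation, which holds because the orbit has no repeated points and therefore accumulates. Everything else is bookkeeping with the separation $|q_r-q_s|\ge1$.
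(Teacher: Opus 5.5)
Your proof is correct and follows essentially the same route as the paper: you use density of the irrational orbit to put $2\pi\ell\vartheta \bmod 2\pi$ in a short window of small positive numbers, so that the points $q_r t$ are separated by more than $\eta$ yet stay clustered inside $\Omega$. The only difference is cosmetic. The paper anchors the cluster at $q_1$, around a chosen $t_0$, with an $\ell$-dependent arc $I_\ell=\pi(K+q_1t)$. Your $I_\ell$ is instead fixed at the midpoint of $\Omega$, which if anything makes the later step of translating $I_{\ell_{0,n}}$ to $I_{\ell_n}$ trivial.
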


\begin{proof}
Let $\pi:\R\to\T$ be the quotient map. Choose a lift of $\Omega$ of the form
\[
 \widetilde\Omega=(c-\omega/2,c+\omega/2)\subset\R
\]
such that
\(
 \pi(\widetilde\Omega)=\Omega.
\) Choose $t_0>0$ sufficiently small that
\[
 (\max_r q_r-\min_r q_r)t_0<\omega/4.
\]
The numbers $q_rt_0$ are distinct. Hence, there exists an open interval $J$
containing $t_0$ and contained in $(0,2\pi)$, and $\eta>0$ such that
\[
 \min_{r\neq s}|q_r-q_s|t>\eta,
 \qquad
 (\max_r q_r-\min_r q_r)t+\eta/2<\omega/2
 \quad(t\in J).
\]
If $K=[c-\eta/2,c+\eta/2]$, the intervals
$K+(q_1-q_r)t,\,1\le r\le M$ are therefore disjoint and lie in $\widetilde\Omega$.
Moreover,
\[
 \pi(K+(q_1-q_r)t)=I_t-q_rt,
\]
where $I_t:=\pi(K+q_1t)$.
Since $\{2\pi\ell\vartheta\bmod 2\pi:\ell\in\mathbb N\}$ is dense in $(0,2\pi)$, there exist
arbitrarily large positive integers $\ell$ such that
\(
 2\pi\ell\vartheta\pmod {2\pi}
\)
belongs to $J$. Replacing $t_0$ with $2\pi\ell\vartheta\bmod 2\pi$ and repeating the
preceding construction yields an interval $I_\ell$ with the desired
properties.
\end{proof}

We next record the matrix realization supplied by disjoint translates.
Suppose that $p_1,\ldots,p_M$ are distinct integers and that the arcs
$I-2\pi p_r\vartheta$ are pairwise disjoint. Choose real smooth functions $h,g$
on $\T$ satisfying
\begin{equation}\label{eq:torus-profiles}
 0\leq h\leq1,\qquad
 \operatorname{supp}h\subset I,\qquad
 \operatorname{supp}g\subset\{h=1\},\qquad \|g\|_2=1.
\end{equation}
Put
\[
 w_r=\mathsf V^{p_r}h(\mathsf U),\qquad
 \mathcal R=[w_1\ \cdots\ w_M],\qquad
 \Pi=\operatorname{diag}(p_1,\ldots,p_M).
\]
Disjointness gives
\begin{equation}\label{eq:torus-row}
 w_r^*w_s=\delta_{rs}h(\mathsf U)^2,
 \qquad \mathcal R^*\mathcal R=h(\mathsf U)^2I_M,
 \qquad \|\mathcal R\|\leq1.
\end{equation}
Indeed, the first identity of \eqref{eq:torus-row} follows from the fact that
$$w_r^*w_s=\mathsf V^{-p_r}\sigma_{p_r}(h)(\mathsf U)\sigma_{p_s}(h)(\mathsf U)\mathsf V^{p_s}.$$
For an $M\times M$ Hermitian matrix $C=C^*$, the element
$\mathcal RC\mathcal R^*$ is self-adjoint and satisfies
\begin{align}
 \|\mathcal RC\mathcal R^*\|&\leq\|C\|,
 \label{eq:torus-block-norm}\\
 \|\delta_1(\mathcal RC\mathcal R^*)\|
 &\leq 2\|h'\|_\infty\|C\|,
 \label{eq:torus-block-d1}\\
 \delta_2(\mathcal RC\mathcal R^*)
 &=\mathrm i\mathcal R[\Pi,C]\mathcal R^*.
 \label{eq:torus-block-d2}
\end{align}
Here the last identity follows from
$\delta_2\mathcal R=\mathrm i\mathcal R\Pi$.

Now choose pairwise disjoint nonempty open arcs $\Omega_n\subset\T$,
and set
\(
 a_n=2^{-n-1}/\pi^2.
\)
Using Proposition \ref{prop:finite-obstruction}, choose $N_n$ so large that
\begin{equation}\label{eq:torus-growth}
 a_n\kappa_{N_n}\geq4n+2.
\end{equation}
Fix $n$ and let $q_{n,1},\ldots,q_{n,2N_n}$ be the diagonal entries of $D_{N_n}$;
thus the ordered list is
\[
 2,2^2,\ldots,2^{N_n},-2,-2^2,\ldots,-2^{N_n}.
\]
Applying Lemma~\ref{lem:orbit-packing} to these distinct integers and $\Omega_n$ gives a length $\eta_n>0$ and an unbounded set $\mathcal L_n$ such that for every $\ell\in \mathcal L_n,$ there exists a compact arc $I_\ell$ of length $\eta_n$ such that
\[
 I_\ell-2\pi\ell q_{n,r}\vartheta,\qquad 1\leq r\leq 2N_n,
\]
are pairwise disjoint and contained in $\Omega_n$.
Fix $\ell_{0,n}\in\mathcal L_n$. Choose profiles $h_n^0,g_n^0$
as in~\eqref{eq:torus-profiles} with $I=I_{\ell_{0,n}}$, and set
\[
 H_n:=\|(h_n^0)'\|_\infty,
 \qquad
 \mu_n:=\sum_{m\in\mathbb Z}|m|\,|\widehat{g_n^0}(m)|^2<\infty.
\]
Choose $\ell_n\in\mathcal L_n$ sufficiently large that
\begin{equation}\label{eq:torus-scale}
 \frac{\pi a_n}{6\ell_n}\leq2^{-n},
 \qquad
 \frac{\pi a_nH_n}{3\ell_n}\leq2^{-n},
 \qquad
 \frac{\pi a_n\mu_n}{3\ell_n}\leq1.
\end{equation}
Translate the arc $I_{\ell_{0,n}}$ to $I_{\ell_n}$, and let
$h_n,g_n$ denote the corresponding translates of $h_n^0,g_n^0$.
Translation preserves $H_n$, $\mu_n$, and all the properties in
\eqref{eq:torus-profiles}. This order of choices is important: the
profile estimates are fixed before $\ell_n$ is chosen sufficiently
large.

Define
\begin{gather*}
 p_{n,r}=\ell_nq_{n,r},\qquad
 \Pi_n=\ell_nD_{N_n},\qquad
 \Gamma_n=\frac{a_n}{\ell_n}E_{N_n},\\
 \mathcal R_n=[\mathsf V^{p_{n,1}}h_n(\mathsf U)\ \cdots\
 \mathsf V^{p_{n,2N_n}}h_n(\mathsf U)],
\end{gather*}
and put $\mathfrak b_n=\mathcal R_n\Gamma_n\mathcal R_n^*$. By construction,
the supports of all $\sigma_{p_{n,r}}h_n$ lie in $\Omega_n$ and are
pairwise disjoint. The same is true of $\sigma_{p_{n,r}}g_n$.
By Proposition \ref{prop:finite-obstruction} and
\eqref{eq:torus-block-norm}--\eqref{eq:torus-scale},
\begin{align*}
 \|\mathfrak b_n\|&\leq\frac{\pi a_n}{6\ell_n}\leq2^{-n},\\
 \|\delta_1\mathfrak b_n\|&\leq\frac{\pi a_nH_n}{3\ell_n}
 \leq2^{-n},\\
 \|\delta_2\mathfrak b_n\|&\leq a_n\|[D_{N_n},E_{N_n}]\|
 \leq\pi a_n\leq2^{-n}.
\end{align*}
Consequently all three series $\sum_n\mathfrak b_n$, $\sum_n\delta_1\mathfrak b_n$,
and $\sum_n\delta_2\mathfrak b_n$ converge in the operator norm of
$\mathsf A_\vartheta$, which gives
\begin{equation}\label{eq:torus-symbol}
 \mathfrak b=\sum_{n\geq1}\mathfrak b_n=\mathfrak b^*\in \mathsf A_\vartheta,
 \qquad
 \delta_k\mathfrak b=\sum_{n\geq1}\delta_k\mathfrak b_n\in \mathsf A_\vartheta
 \quad(k=1,2).
\end{equation}

It remains to show that the commutator is unbounded. Define isometries
\[
 J_n:\mathbb C^{2N_n}\longrightarrow L_2(\mathsf A_\vartheta),
 \qquad J_ne_r=\mathsf V^{p_{n,r}}g_n(\mathsf U).
\]
The vectors $\mathsf V^{p_{n,r}}g_n(\mathsf U)$ are
orthonormal due to the choice of $h_n$ and $g_n$. Moreover, we have the exact identities
\[
 (\mathsf V^{p_{n,r}}h_n(\mathsf U))^*J_ne_s=\delta_{rs}g_n(\mathsf U),
 \qquad \mathcal R_n^*J_ne_s=e_sg_n(\mathsf U)
\]
since $h_ng_n=g_n$, which yields $\mathfrak b_nJ_n=J_n\Gamma_n$. If $m\neq n$, each entry of
$\mathcal R_m^*J_n$ contains a product of a translate of $h_m$ and a
translate of $g_n$. These supports lie in the disjoint arcs
$\Omega_m$ and $\Omega_n$, respectively, so $\mathfrak b_mJ_n=0$. Taking the norm limit in
\eqref{eq:torus-symbol} gives
\begin{equation}\label{eq:torus-exact-action}
 \mathfrak bJ_n=J_n\Gamma_n,
 \qquad J_n^*\mathfrak b=\Gamma_nJ_n^*.
\end{equation}

Now by the identity $$g_n(\mathsf U)=\sum_{m\in\Z}\widehat{g_n}(m)\mathsf U^m,$$ we have
\[
 J_n^*T_2J_n
 =\operatorname{diag}\left(
 \sum_{m\in\mathbb Z}|\widehat g_n(m)|^2
 \frac{p_{n,r}^2}{\sqrt{m^2+p_{n,r}^2}}
 \right)_{r=1}^{2N_n}.
\]
For real $m,p$, by \eqref{eq:plane-absolute-approximation},
\[
 0\leq |p_{n,r}|-\frac{p_{n,r}^2}{\sqrt{m^2+p_{n,r}^2}}\leq |m|.
\]
Parseval's
identity therefore yields a real diagonal matrix $F_n$ such that
\begin{equation}\label{eq:torus-compression}
 J_n^*T_2J_n=|\Pi_n|+F_n,
 \qquad \|F_n\|\leq\mu_n,
\end{equation}
which combined with~\eqref{eq:torus-exact-action} implies that
\[
 J_n^*[\mathfrak b,T_2]J_n=[\Gamma_n,|\Pi_n|+F_n].
\]
Using~\eqref{eq:torus-growth} and~\eqref{eq:torus-scale}, we obtain
\begin{align*}
 \|J_n^*[\mathfrak b,T_2]J_n\|
 &\geq a_n\|[E_{N_n},|D_{N_n}|]\|
       -2\mu_n\|\Gamma_n\|\\
 &\geq\left(a_n\kappa_{N_n}
          -\frac{\pi a_n\mu_n}{3\ell_n}\right)
 \geq4n+1.
\end{align*}
Hence $[\mathfrak b,T_2]$ has no bounded
extension to $L_2(\mathsf A_\vartheta)$.
\subsection{Passage to arbitrary dimension and every coordinate}

Suppose that $\theta$ has an irrational entry. As in the Euclidean
argument, we choose a two-dimensional frequency plane so that the same
construction produces an obstruction in every coordinate.

Choose $p\ne q$ with $\theta_{qp}\notin\mathbb Q$, and put
$v^0=(1,\ldots,1)^{\mathsf T}$. At least one of the two vectors
$v=v^0+Ne_q$, $N\in\{0,1\}$, satisfies
\begin{equation}\label{eq:torus-lattice-choice}
 \vartheta:=v^{\mathsf T}\theta e_p\notin\mathbb Q.
\end{equation}
Fix such $v=(v_1,\dots,v_d)$. Then every $v_j$ is nonzero, and $e_p,v$ are linearly
independent because $(e_p)_q=0$ and $v_q\ne0$.

Now consider the unitaries $\mathsf U:=U_p$ and $\mathsf V:=U^v$. They satisfy
\[
 U^vU_p=e^{2\pi\mathrm i\vartheta}U_pU^v.
\]
Since the map
$(m,k)\mapsto me_p+kv$ is injective,
\[
 \tau\bigl(U_p^m(U^v)^k\bigr)=\delta_{m,0}\delta_{k,0}.
\]
Consequently, the map determined by
\begin{equation}\label{eq:torus-embedding}
 \Phi_\theta^\T:\mathsf A_\vartheta\longrightarrow\T_\theta^d,
 \qquad
 \Phi_\theta^\T(\mathsf U^m\mathsf V^k)=U_p^m(U^v)^k
\end{equation}
is a trace-preserving normal
$*$-homomorphism. In particular, $\Phi^\T_\theta$ induces an isometry of the corresponding
$L_2$ spaces.
Let $\delta_1,\delta_2$ denote the two canonical derivations on
$\mathsf A_\vartheta$. The ambient derivatives $\partial_j,1\le j\le d$ satisfy
\begin{equation}\label{eq:torus-embedded-derivatives}
 \partial_j\Phi_\theta^\T(x)
 =\Phi^\T_\theta\bigl(\delta_{pj}\delta_1x+v_j\delta_2x\bigr),
 \qquad 1\leq j\leq d.
\end{equation}
Likewise, the ambient multipliers $T_j$ satisfy
\begin{equation}\label{eq:torus-embedded-multiplier}
 T_j\Phi_\theta^\T(\mathsf U^m\mathsf V^k)
 =m_j\bigl(me_p+kv\bigr)\Phi_\theta^\T(\mathsf U^m\mathsf V^k).
\end{equation}

We now define
\[
 b_n:=\Phi_\theta^\T(\mathfrak b_n),
 \qquad
 b:=\Phi_\theta^\T(\mathfrak b),
\]
where $\mathfrak b_n$ is the element constructed in
Section~\ref{s4.2}. We use the same notation $p_{n,r}$, $g_n$, $J_n$, $\Pi_n$, and
$\Gamma_n$ as in Section~\ref{s4.2}. By
\eqref{eq:torus-embedded-derivatives} and the estimates established
there,
\[
 \|b_n\|\leq2^{-n},
 \qquad
 \|\partial_j b_n\|
 \leq
 \bigl(|\delta_{p,j}|+|v_j|\bigr)2^{-n},
 \qquad 1\leq j\leq d.
\]
Thus $b=\sum_{n\geq1}b_n$ and all its first derivatives converge in
operator norm, so that
\[
 b=b^*\in W^{1,\infty}(\T_\theta^d).
\]
Since $\Phi_\theta^\T$ preserves multiplication, 
\eqref{eq:torus-exact-action} gives
\[
 b\Phi_\theta^\T J_n=\Phi_\theta^\T J_n\Gamma_n,\qquad (\Phi_\theta^\T J_n)^*b=\Gamma_n(\Phi_\theta^\T J_n)^*.
\]
On the other hand, as in the Euclidean argument,
$m_j$ has Lipschitz constant at most $3$, and
$m_j(kv)=|k|v_j^2/|v|$. Hence
\begin{equation}\label{eq:torus-restricted-symbol}
 \left|m_j\bigl(me_p+kv\bigr)-|k|v_j^2/|v|\right|
 \leq3|m|,
 \qquad m,k\in\Z.
\end{equation}
The integer Fourier support of $\Phi_\theta^\T J_ne_r=\Phi_\theta^\T(\mathsf V^{p_{n,r}}g_n(\mathsf U))$ is contained in
$\{me_p+p_{n,r}v:m\in\Z\}$, and these sets are disjoint for distinct
$r$. Therefore \eqref{eq:torus-embedded-multiplier} gives
\[
 J_n^*(\Phi_{\theta}^\T)^* T_j\Phi_{\theta}^\T J_n
 =\operatorname{diag}\left(
   \sum_{m\in\Z}|\widehat g_n(m)|^2
     m_j\bigl(me_p+p_{n,r}v\bigr)
 \right)_{r=1}^{2N_n}.
\]
Using \eqref{eq:torus-restricted-symbol}, Parseval's identity, and
the definition of $\mu_n$, there exists a real diagonal matrix $F_{n,j}$ such that
\begin{equation}\label{eq:torus-highdim-compression}
 J_n^*(\Phi_{\theta}^\T)^* T_j\Phi_{\theta}^\T J_n=\frac{v_j^2}{|v|}|\Pi_n|+F_{n,j},
 \qquad
 \|F_{n,j}\|\leq3\mu_n.
\end{equation}
The exact block action now yields
\[
 J_n^*(\Phi_{\theta}^\T)^*[b,T_j]\Phi_{\theta}^\T J_n
 =[\Gamma_n,\frac{v_j^2}{|v|} |\Pi_n|+F_{n,j}].
\]
Since $\Gamma_n=(a_n/\ell_n)E_{N_n}$ and
$\Pi_n=\ell_nD_{N_n}$, Proposition \ref{prop:finite-obstruction} and \eqref{eq:torus-growth}--\eqref{eq:torus-scale} imply
\begin{align*}
 \| J_n^*(\Phi_{\theta}^\T)^*[b,T_j]\Phi_{\theta}^\T J_n\|
 &\geq a_n\kappa_{N_n}\frac{v_j^2}{|v|}
       -6\mu_n\|\Gamma_n\|\\
 &\geq (4n+2)\frac{v_j^2}{|v|}-3
 \longrightarrow\infty.
\end{align*}
Thus this self-adjoint $b$
makes $[b,T_j]$ unbounded on
$L_2(\T_\theta^d)$ for every $j=1,\ldots,d$.
This proves Theorem~\ref{thm:torus}.

\section{Proof of Theorem~\ref{thm:torus-rational}}\label{subsec:rational-tori}

In this section, we prove Theorem~\ref{thm:torus-rational}. We first prove a matrix-valued $T1$ estimate with logarithmic
dependence on the matrix dimension. The key ingredient in the proof is the operator-valued dyadic representation established in \cite[Theorem~2]{HanninenHytonen2016}.
We then use the transference argument to obtain the upper bound on a rational quantum torus. To prove optimality, we construct a smooth symbol using the
equivariance structure of the quantum torus that attains the logarithmic order of the upper bound.

\subsection{\texorpdfstring{Operator-valued $T1$ operators and dyadic representation}{Operator-valued T1 operators and dyadic representation}}

We first recall the definitions of operator-valued BMO spaces. For a separable Hilbert space $\mathcal H$, let $F:\R^d\to B(\mathcal H)$. The column and row $\mathrm{BMO}(\R^d;B(\mathcal H))$ seminorms are
defined by
\begin{align}
 \|F\|_{\mathrm{BMO}^c}
 &=
 \sup_Q
 \left\|
   \frac1{|Q|}
   \int_Q
   (F(x)-F_Q)^*(F(x)-F_Q)\,\dd x
 \right\|^{1/2},
 \label{eq:operator-BMO-column}\\
 \|F\|_{\mathrm{BMO}^r}
 &=
 \|F^*\|_{\mathrm{BMO}^c},
 \qquad
 \|F\|_{\mathrm{BMO}^{rc}}
 =
 \max\{
   \|F\|_{\mathrm{BMO}^c},
   \|F\|_{\mathrm{BMO}^r}
 \}.
 \label{eq:operator-BMO-row}
\end{align}
Here the supremum is taken over all cubes $Q\subset\R^d$, and
\[
 F_Q=\frac1{|Q|}\int_QF(x)\,\dd x.
\]
These are the BMO spaces arising in noncommutative harmonic analysis;
see Mei~\cite{Mei2007}. 

We then review the operator-valued $T1$ theory; see, for instance,
\cite{Isralowitz2018,HongLiuMei2020,Hytonen2021}.
Let $T$ be a $B(\mathcal H)$-valued Calder\'on--Zygmund operator whose
associated kernel $K$ satisfies the size estimate
\begin{equation}\label{eq:size}
 \|K(x,y)\|_{B(\mathcal H)}
 \lesssim \frac{1}{|x-y|^d},
\end{equation}
and, whenever $2|x-x'|\leq |x-y|$, the $\alpha$-H\"older regularity estimate
\begin{align}\label{eq:smooth}
 \|K(x,y)-K(x',y)\|_{B(\mathcal H)}
 +\|K(y,x)-K(y,x')\|_{B(\mathcal H)}
 \lesssim
 \frac{|x-x'|^\alpha}{|x-y|^{d+\alpha}},
\end{align}
for some $0<\alpha\le1$.
The natural operator-valued analogue of the BMO condition in
the scalar case is
\begin{equation}\label{eq:operator-T1-cancellation}
 T1,\ T^*1\in \operatorname{BMO}^{rc}(\R^d;B(\mathcal H)).
\end{equation}
The corresponding weak boundedness property takes the form
\begin{equation}\label{eq:operator-weak-boundedness}
 \left|
 \left\langle
   T(\phi_Q\xi),\psi_Q\eta
 \right\rangle_{L_2(\R^d;\mathcal H)}
 \right|
 \lesssim
 |Q|\,\|\xi\|_{\mathcal H}\|\eta\|_{\mathcal H},
 \qquad
 \xi,\eta\in\mathcal H,
\end{equation}
uniformly over all cubes $Q\subset\R^d$, where
$\phi_Q,\psi_Q\in C_c^\infty(\R^d)$ are supported in $Q$ and satisfy
the standard normalization
\[
 \|\phi_Q\|_\infty,\|\psi_Q\|_\infty\leq 1,
 \qquad
 \|\nabla\phi_Q\|_\infty,\|\nabla\psi_Q\|_\infty
 \lesssim \ell(Q)^{-1}.
\]
An operator $T$ satisfying the above conditions will be called an
operator-valued $T1$ operator.

We next record the dyadic operators that will be used below in the
Hilbert-space setting on $\R^d$; see
\cite{HanninenHytonen2016,Hytonen2021}.
Fix a dyadic grid $\mathcal D$ in $\R^d$. For $I=I_1\times\cdots\times I_d\in\mathcal D$, let
$h_I^\eta=\bigotimes_{r=1}^d h_{I_r}^{\eta_r}$,
$\eta\in\{0,1\}^d\setminus\{0\}$, denote the normalized cancellative
Haar functions, where $h_J^0=|J|^{-1/2}\1_J$ and $h_J^1$ is the
normalized Haar function on the interval $J$. Write
\[
 \langle f,h_I^\eta\rangle=\int_I h_I^\eta(x)f(x)\,\dd x,
 \qquad
 f_I=\frac1{|I|}\int_I f(x)\,\dd x.
\]
An operator-valued cancellative dyadic shift of type
$(i,j)\in\N^2$ is given by
\begin{equation}\label{eq:operator-dyadic-shift}
 S_{\mathcal D}^{ij}f
 =\sum_{K\in\mathcal D}
   \sum_{\substack{I,J\in\mathcal D,\ I,J\subset K\\
       \ell(I)=2^{-i}\ell(K),\ \ell(J)=2^{-j}\ell(K)}}
   \sum_{\eta,\eta'\in\{0,1\}^d\setminus\{0\}}
   h_J^{\eta'}\,a_{I,J}^{K,\eta,\eta'}\,\langle f,h_I^\eta\rangle.
\end{equation}
The coefficients $a_{I,J}^{K,\eta,\eta'}\in B(\mathcal H)$ satisfy
\[
 \|a_{I,J}^{K,\eta,\eta'}\|_{B(\mathcal H)}
 \leq
 \frac{|I|^{1/2}|J|^{1/2}}{|K|}.
\]
Equivalently, the normalized coefficient family
\[
 \left\{
 \frac{|K|}{|I|^{1/2}|J|^{1/2}}\,a_{I,J}^{K,\eta,\eta'}
 \right\}_{K,I,J,\eta,\eta'}
\]
is uniformly bounded in $B(\mathcal H)$. Since every uniformly bounded
family of operators on a Hilbert space is automatically
$\mathcal R$-bounded, \cite[Theorem~1]{HanninenHytonen2016} yields
\begin{equation}\label{eq:operator-dyadic-shift-bound}
 \|S_{\mathcal D}^{ij}\|_{B(L_2(\R^d;\mathcal H))}
 \lesssim_d 1+\max\{i,j\},
\end{equation}

For a locally integrable $B(\mathcal H)$-valued symbol $b$, define
the dyadic paraproduct and its formal adjoint by
\begin{equation}\label{eq:operator-dyadic-paraproduct}
 \begin{aligned}
 \Pi_b^{\mathcal D}f
 &=\sum_{I\in\mathcal D}\sum_{\eta\in\{0,1\}^d\setminus\{0\}}
   h_I^\eta\,\langle b,h_I^\eta\rangle\,f_I,\\
 (\Pi_b^{\mathcal D})^*f
 &=\sum_{I\in\mathcal D}\sum_{\eta\in\{0,1\}^d\setminus\{0\}}
   \frac{\1_I}{|I|}
   \langle b,h_I^\eta\rangle^*\,\langle f,h_I^\eta\rangle.
 \end{aligned}
\end{equation}
These sums are initially understood for finite Haar expansions, or
equivalently through their pairings with suitable test functions.
When $\mathcal H=\C^N$, the matrix-valued paraproduct estimate yields
\begin{equation}\label{eq:operator-matrix-paraproduct-bound}
 \|\Pi_b^{\mathcal D}\|_{B(L_2(\R^d;\C^N))}
 \lesssim
 \log(N+1)\,\|b\|_{\mathrm{BMO}^c},
\end{equation}
and
\begin{equation}\label{eq:operator-matrix-paraproduct-bound*}
 \left\|\left(\Pi_{b^*}^{\mathcal D}\right)^*\right\|_{B(L_2(\R^d;\C^N))}
 \lesssim_d
 \log(N+1)\,\|b\|_{\mathrm{BMO}^r};
\end{equation}
see \cite{Katz1997}. The logarithmic dependence on $N$
is optimal; see \cite{NazarovPisierTreilVolberg2002}.

We now recall the dyadic representation theorem
\cite[Theorem~2]{HanninenHytonen2016}. Let $T$ be an operator-valued
$T1$ operator with kernel regularity exponent $\alpha\in(0,1]$, and assume that $T:L^p(\R^d;\mathcal H)\to L^p(\R^d;\mathcal H)$ is bounded.
Let $C_T$ denote the supremum of the constants appearing in its defining conditions. Then, for any $f,g\in C_c^1(\R^d;\mathcal H)$,
\begin{equation}\label{eq:operator-dyadic-representation}
 \begin{aligned}
 \langle Tf,g\rangle
 =\mathbb E_\omega\Bigg[
 4C_T\sum_{i,j\geq0}
  2^{-\frac{\alpha}{2}\max\{i,j\}}
  \langle S_{\mathcal D^\omega}^{ij}f,g\rangle +
 \left\langle
   \left(
    \Pi_{T1}^{\mathcal D^\omega}
    +(\Pi_{T^*1}^{\mathcal D^\omega})^*
   \right)f,g
 \right\rangle
 \Bigg].
 \end{aligned}
\end{equation}
Here we let $\mathcal D^0$ denote the standard dyadic grid in $\R^d$, and for
\[
 \omega=(\omega_k)_{k\in\mathbb Z}
 \in (\{0,1\}^d)^{\mathbb Z},
\]
we define
\[
 \mathcal D^\omega
 =
 \left\{
 Q+\sum_{\substack{k\in\mathbb Z\\2^{-k}<\ell(Q)}}
 2^{-k}\omega_k
 :
 Q\in\mathcal D^0
 \right\}.
\]

\subsection{The logarithmic upper bound}

For convenience, we write $T_j^\R$ and $T_j^\T$ for the
Fourier multipliers on $\R^d$ and $\T^d$ respectively, with the same symbol $m_j$.
We use the same notation \(T_j^\R\) and \(T_j^\T\) for their componentwise extensions to matrix-valued functions. For a $\mathbb M_N(\C)$-valued function $b$, we set
\[
 \|b\|_{W^{1,\infty}(\T^d;\mathbb M_N(\C))}
 :=
 \|b\|_{L^\infty(\T^d;\mathbb M_N(\C))}
 +
 \sum_{k=1}^d
 \|\partial_k b\|_{L^\infty(\T^d;\mathbb M_N(\C))}.
\]

\begin{lemma}\label{lem:periodic-calderon}
For every $N\geq1$ and
$b\in W^{1,\infty}(\T^d;\mathbb M_N(\C))$,
\begin{equation}\label{eq:periodic-calderon}
 \|[b,T_j^\T]\|_{B(L_2(\T^d;\C^N))}
 \leq C_d\log(N+1)\sum_{k=1}^d\|\partial_kb\|_\infty,
 \qquad 1\leq j\leq d.
\end{equation}
Viewing $b$ as acting on $S_2(\C^N)$ by left multiplication, the same estimate, with the same constant, holds on $L_2(\T^d;S_2(\C^N))$:
\begin{equation}\label{eq:periodic-calderon2}
 \|[b,T_j^\T]\|_{B(L_2(\T^d;S_2(\C^N)))}
 \leq C_d\log(N+1)\sum_{k=1}^d\|\partial_kb\|_\infty,
 \qquad 1\leq j\leq d.
\end{equation}
\end{lemma}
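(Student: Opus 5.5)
Our plan is to prove the Euclidean estimate first, on $L_2(\R^d;\C^N)$ for $b\in W^{1,\infty}(\R^d;\mathbb M_N(\C))$, and then transfer it to the torus. The Euclidean statement will follow from the dyadic representation \eqref{eq:operator-dyadic-representation}.

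\textbf{Kernel estimates.} The commutator $T=[b,T_j^\R]$ has kernel
\[
K(x,y)=c_d\,(b(x)-b(y))\,\partial_j^2\bigl(|x-y|^{-d+1}\bigr)\cdot(\text{const}),
\]
which is homogeneous of degree $-d-1$ in $x-y$. Put $\Lambda:=\sum_k\|\partial_kb\|_\infty$. The mean value inequality, applied in operator norm, gives $\|b(x)-b(y)\|\le\Lambda|x-y|$. This yields the size estimate \eqref{eq:size} with constant $\lesssim\Lambda$. Splitting
\[
K(x,y)-K(x',y)=(b(x)-b(x'))k(x,y)+(b(x')-b(y))\bigl(k(x,y)-k(x',y)\bigr)
\]
gives the smoothness estimate \eqref{eq:smooth} with $\alpha=1$, and the same holds in the second variable.

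\textbf{Testing conditions.} For the weak boundedness property, write
\[
\langle[b,T_j^\R](\phi_Q\xi),\psi_Q\eta\rangle=\langle T_j^\R(\phi_Q\xi),b^*\psi_Q\eta\rangle-\langle T_j^\R(b\phi_Q\xi),\psi_Q\eta\rangle.
\]
Replace $b$ by $b-b(x_Q)$, which does not change the commutator. Then use $T_j^\R=-\partial_jR_j$ and integrate $\partial_j$ by parts. The result is a sum of terms of the form $\langle R_j(\cdot),\partial_j(\cdot)\rangle$, each bounded by $\|b-b(x_Q)\|_{L_\infty(Q)}\ell(Q)^{-1}|Q|\lesssim\Lambda|Q|$ thanks to the $L_2$-boundedness of $R_j$.

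For the BMO conditions, we expect $T1=-T_j^\R b=\partial_jR_jb=R_j(\partial_jb)$, understood modulo constants. Since $R_j$ maps $L_\infty$ into $\mathrm{BMO}^{rc}$ (a Calder\'on--Zygmund operator with scalar kernel acting on $\mathbb M_N$-valued functions), this gives $\|T1\|_{\mathrm{BMO}^{rc}}\lesssim\Lambda$, and similarly for $T^*1$. Justifying $T1$ rigorously through suitable truncations is routine.

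\textbf{Dyadic representation.} The hypotheses of \cite[Theorem~2]{HanninenHytonen2016} require a priori boundedness. We secure it by first working with smooth, compactly supported truncations of $b$, for which the commutator is bounded with an $N$-dependent constant; only the final bound must be uniform. Then \eqref{eq:operator-dyadic-shift-bound} gives $\sum_{i,j}2^{-\max\{i,j\}/2}(1+\max\{i,j\})\lesssim1$, so the shift part is bounded by $C\Lambda$. The paraproduct part is bounded by $\log(N+1)\Lambda$ via \eqref{eq:operator-matrix-paraproduct-bound} and \eqref{eq:operator-matrix-paraproduct-bound*}. We expect the main obstacle to be making the testing computations and this approximation rigorous with constants independent of $N$.

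\textbf{Transfer to $\T^d$.} This is done by the standard periodization transference for Fourier multipliers (de Leeuw type). The symbol $m_j$ is continuous and homogeneous, and $b$ is periodic. Testing against $f(x)\chi(\varepsilon x)$ with Gaussian cutoffs $\chi$, the commutator $[b,T_j^\R]$ applied to these test functions approximates the periodic commutator, because $m_j(\xi)$ evaluated at lattice points agrees with its periodic counterpart. Letting $\varepsilon\to0$ gives \eqref{eq:periodic-calderon}.

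\textbf{The $S_2$ version.} We identify $L_2(\T^d;S_2(\C^N))\cong L_2(\T^d;\C^N)\otimes\C^N$, with $b$ acting on the first factor. The commutator then becomes $[b,T_j^\T]\otimes\mathbf{Id}_N$, which has the same norm. This gives \eqref{eq:periodic-calderon2}.
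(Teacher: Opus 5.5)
Your argument is correct, but it reaches $\R^d$ by a different route from the paper.

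\textbf{Your route.} You run the operator-valued $T1$ argument directly on $\R^d$:
\begin{itemize}
\item size and smoothness bounds for the kernel $(b(x)-b(y))k(x-y)$, with $k$ homogeneous of degree $-d-1$;
\item weak boundedness, by writing $T_j^\R=-\partial_jR_j$ and integrating by parts;
\item the test $T1=R_j(\partial_jb)$, and likewise $T^*1$, via the $N$-independent bound $R_j:L_\infty\to\mathrm{BMO}^{rc}$;
\item the $d$-dimensional dyadic representation, combined with the $d$-dimensional logarithmic matrix paraproduct bound.
\end{itemize}
In the weak boundedness step, when $\partial_j$ falls on $b^*$ the term is bounded by $\|\partial_jb\|_\infty|Q|$, not by the expression you wrote. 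It is still $\lesssim\Lambda|Q|$, so nothing breaks.

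\textbf{The paper's route.} The paper carries out the $T1$ argument only for $[b,|\frac{\mathrm d}{\mathrm dx}|]$ on $\R$. There the kernel $\frac{b(y)-b(x)}{\pi(x-y)^2}$ is explicit, and weak boundedness follows from the antisymmetry $K_b(y,x)=-K_b(x,y)$. The BMO tests reduce to the Hilbert transform, and a priori boundedness comes from the scalar theory applied entrywise. It then passes to $\R^d$ through the rotation identity $\frac{\xi_j^2}{|\xi|}=c_d^{-1}\int_{S^{d-1}}((d+1)\omega_j^2-1)|\omega\cdot\xi|\,\mathrm d\sigma(\omega)$. This writes $[b,T_j^\R]$ as an average of directional commutators $[b,|\partial_\omega|]$, each estimated line by line by Fubini.

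\textbf{What each buys.} The paper needs the dyadic machinery and the sharp paraproduct bound only in dimension one, and never analyses the $d$-dimensional kernel. Your route dispenses with the rotation and slicing step. The price is relying on the $d$-dimensional versions of those tools, which the paper also records.

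\textbf{Remaining steps.}
\begin{itemize}
\item A priori boundedness through smooth compactly supported approximants is legitimate, for instance via $\|[b,T_j^\R]\|\lesssim\|\,|\xi|\widehat b\,\|_{L_1}$, which follows from the Lipschitz continuity of $m_j$. Applying the classical Calder\'on theorem entrywise, as the paper does, avoids the approximation altogether.
\item The transference to $\T^d$ is essentially the paper's. The operative fact is the Lipschitz continuity of $m_j$ near lattice points, not mere agreement of the symbols at those points. One should also first take $b$ to be a trigonometric polynomial, so that $bf$ has summable Fourier coefficients, and then argue by density.
\item Your identification of the commutator on $L_2(\T^d;S_2(\C^N))$ with $[b,T_j^\T]\otimes\mathbf{Id}_N$ supplies the second estimate, which the paper states without proof.
\end{itemize}
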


\begin{proof}
We first work on $\R$.
Let $H$ be the Hilbert transform with symbol
$-\mathrm i\operatorname{sgn}\xi$, so that
\(
 \left|\frac{\mathrm d}{\mathrm dx}\right|
 =H\frac{\mathrm d}{\mathrm dx}.
\)
We first record the kernel estimates and the weak boundedness property
for $\left[b,\left|\frac{\mathrm d}{\mathrm dx}\right|\right]$.
For $x\ne y$, the kernel is
\[
 K_b(x,y)=\frac{b(y)-b(x)}{\pi(x-y)^2}.
\]
The condition $b\in W^{1,\infty}(\R;\mathbb M_N(\C))$ gives
\begin{equation}\label{size}
 \|K_b(x,y)\|_{\mathbb M_N(\C)}
 \lesssim\left\|\frac{\mathrm d}{\mathrm dx}b\right\|_\infty
 \frac{1}{|x-y|},
\end{equation}
and, whenever $2|x-x'|\leq|x-y|$,
\begin{align*}
 \|K_b(x,y)-K_b(x',y)\|_{\mathbb M_N(\C)}
   +\|K_b(y,x)-K_b(y,x')\|_{\mathbb M_N(\C)}\lesssim
 \left\|\frac{\mathrm d}{\mathrm dx}b\right\|_\infty
 \frac{|x-x'|}{|x-y|^2}.
\end{align*}
Thus the kernel satisfies the standard size condition
\eqref{eq:size} and smoothness condition \eqref{eq:smooth},
with constants independent of $N$.

For the weak boundedness condition, take
$\phi_Q,\psi_Q\in C_c^\infty(\R)$ supported in $Q$, with the usual normalization
\[
 \|\phi_Q\|_\infty,\|\psi_Q\|_\infty\leq1,\qquad
 \|\phi_Q'\|_\infty,\|\psi_Q'\|_\infty\leq|Q|^{-1}.
\]
The identity $K_b(y,x)=-K_b(x,y)$ implies that, for
$\xi,\eta\in\C^N$,
\begin{align*}
 \left\langle \left[b,\left|\frac{\mathrm d}{\mathrm dx}\right|\right](\phi_Q\xi),\psi_Q\eta\right\rangle
 =\frac12\int_Q\int_Q
 \left\langle K_b(x,y)\xi,\eta\right\rangle\bigl(
 \phi_Q(y)\overline{\psi_Q(x)}
 -\phi_Q(x)\overline{\psi_Q(y)}
 \bigr)\,\mathrm dy\mathrm dx.
\end{align*}
Since
\[
 \frac12\bigl|
 \phi_Q(y)\overline{\psi_Q(x)}
 -\phi_Q(x)\overline{\psi_Q(y)}
 \bigr|\leq\frac{|x-y|}{|Q|},
\]
the kernel size estimate \eqref{size} yields
\begin{align*}
 \left|\left\langle \left[b,\left|\frac{\mathrm d}{\mathrm dx}\right|\right](\phi_Q\xi),\psi_Q\eta\right\rangle\right|
 &\lesssim
 \left\|\frac{\mathrm d}{\mathrm dx}b\right\|_\infty\frac{\|\xi\|_{\C^N}\|\eta\|_{\C^N}}{|Q|}
 \int_Q\int_Q\dd x\dd y\\
 &=\left\|\frac{\mathrm d}{\mathrm dx}b\right\|_\infty|Q|\,\|\xi\|_{\C^N}\|\eta\|_{\C^N}.
\end{align*}
This proves weak boundedness \eqref{eq:operator-weak-boundedness} with a constant independent of $N$.

For the BMO condition, note that we have the identity
\[
 \begin{aligned}
 \left[b,\left|\frac{\mathrm d}{\mathrm dx}\right|\right]1
 =-H\frac{\mathrm d}{\mathrm dx}b,\qquad
 \left[b,\left|\frac{\mathrm d}{\mathrm dx}\right|\right]^*1
 =H\left(\left(\frac{\mathrm d}{\mathrm dx}b\right)^*\right).
 \end{aligned}
\]
Using \cite[Theorem~6.4]{Mei2007}, we have
\begin{equation}\label{eq:rational-column-tests}
 \left\| \left[b,\left|\frac{\mathrm d}{\mathrm dx}\right|\right]1\right\|_{\mathrm{BMO}^{rc}}=\left\|H\frac{\mathrm d}{\mathrm dx}b\right\|_{\mathrm{BMO}^{rc}}
 \leq C\left\|\frac{\mathrm d}{\mathrm dx}b\right\|_\infty.
\end{equation}
Hence applying the scalar $T1$ theorem entrywise, we may deduce that for any fixed $N\in \N,$ $\left[b,\left|\frac{\mathrm d}{\mathrm dx}\right|\right]$ is $L_p(\R;\mathbb C^N)$-bounded.
Combining the dyadic representation \eqref{eq:operator-dyadic-representation} with the estimates
\eqref{eq:operator-dyadic-shift-bound},
\eqref{eq:operator-matrix-paraproduct-bound}, \eqref{eq:operator-matrix-paraproduct-bound*} and \eqref{eq:rational-column-tests}, we have
\begin{equation}\label{eq:rational-line-bound}
 \left\|\left[b,\left|\frac{\mathrm d}{\mathrm dx}\right|\right]\right\|_{B(L_2(\R;\C^N))}
 \leq C\log(N+1)\left\|\frac{\mathrm d}{\mathrm dx}b\right\|_\infty.
\end{equation}

Now we work on $\R^d$ for $d\geq2$. Let $\sigma$ be normalized surface measure on
$S^{d-1}$ and put $c_d=\int_{S^{d-1}}|\omega_1|\dd\sigma(\omega)$.
Rotational invariance gives
\begin{equation}\label{eq:rational-rotation}
 \frac{\xi_j^2}{|\xi|}
 =\frac1{c_d}\int_{S^{d-1}}
   \bigl((d+1)\omega_j^2-1\bigr)|\omega\cdot\xi|
   \dd\sigma(\omega).
\end{equation}
Consequently, we have \[
 [b,T_j^\R]
 =
 \frac1{c_d}
 \int_{S^{d-1}}
 \bigl((d+1)\omega_j^2-1\bigr)
 [b,|\partial_\omega|]
 \,\dd\sigma(\omega),
\]
where $\partial_\omega$ denotes the directional derivative in the
direction $\omega$. Fix $\omega\in S^{d-1}$ and write
$x=y+t\omega$, where $y\in\omega^\perp$ and $t\in\R$.
For each fixed $y$, set
\[
 f_{\omega,y}(t):=f(y+t\omega),
 \qquad
 b_{\omega,y}(t):=b(y+t\omega).
\]
Then, for $f\in L^2(\R^d;\C^N)$,
\[
 ([b,|\partial_\omega|]f)(y+t\omega)
 =
 \left(
 \left[b_{\omega,y},
       \left|\frac{\mathrm d}{\mathrm dt}\right|\right]
 f_{\omega,y}
 \right)(t).
\]
Applying \eqref{eq:rational-line-bound} on each line and
using Fubini's theorem, we obtain
\begin{align*}
 \|[b,|\partial_\omega|]\|_{B(L_2(\R^d;\C^N))}
 &\leq
 \sup_{y\in\omega^\perp}
 \left\|
 \left[b_{\omega,y},
       \left|\frac{\mathrm d}{\mathrm dt}\right|\right]
 \right\|_{B(L_2(\R;\C^N))}\\
 &\leq
 C\log(N+1)
 \sup_{y\in\omega^\perp}
 \left\|\frac{\mathrm d}{\mathrm dt}b_{\omega,y}\right\|_\infty\\
 &=C\log(N+1)\|\partial_\omega b\|_\infty\\
 &\leq
 C\log(N+1)\sum_{k=1}^d\|\partial_{x_k}b\|_\infty,
\end{align*}
where $C$ is independent of $\omega$ and $N$.

To pass from $\R^d$ to $\T^d$, we identify functions on $\T^d$ with their $(2\pi\Z)^d$-periodic
extensions to $\R^d$. We first take $b$ and $f$ to be matrix-valued and
$\C^N$-valued trigonometric polynomials on $\T^d$. Choose $\varphi\in C_c^\infty(\R^d)$ with
$\|\varphi\|_2=1$ and set
$\varphi_R(x):=R^{-d/2}\varphi(x/R)$.
By the fact that $m_j$ has Lipschitz constant at most $3$,
\begin{align}\label{Lip error}
    \|T_j^\R(\varphi_Rg)
       -\varphi_RT_j^\T g\|_2
 \leq3R^{-1}\|\nabla\varphi\|_2\sum_{n\in\Z^d}\|\widehat g(n)\|, \qquad \forall g\in L_2(\T^d;\mathbb M_N(\C)).
\end{align}
Take $\chi\in C_c^\infty(\R^d)$ with $0\leq\chi\leq1$ and
$\chi=1$ on $\operatorname{supp}\varphi$, and put
$b_R(x)=\chi(x/R)b(x)$. Applying the identity $b_R\varphi_R=b\varphi_R$ and \eqref{Lip error} with $g=f$ and $g=bf,$ we have
\[
 [b_R,T_j^\R](\varphi_Rf)
 =\varphi_R[b,T_j^\T]f+R^{-1}\mathcal E,
\]
where we have \[
 \|\mathcal E\|_2
 \leq
 3\|\nabla\varphi\|_2
 \left(
 \|b\|_\infty\sum_{n\in\Z^d}\|\widehat f(n)\|
 +\sum_{n\in\Z^d}\|\widehat{bf}(n)\|
 \right).
\]
Moreover, we have
\[
 \sum_{k=1}^d\|\partial_kb_R\|_\infty
 \leq\sum_{k=1}^d\|\partial_kb\|_\infty
       +O(R^{-1})\|b\|_\infty.
\]
Let $R\to\infty$,
and we obtain \eqref{eq:periodic-calderon} for the matrix-valued trigonometric polynomial $b$ and $\C^N$-valued trigonometric polynomial $f$.
The general case follows by a density argument.
\end{proof}

Now we prove \eqref{eq:rational-torus-bound}. Choose an integer $q\geq1$ for which all entries of $q\theta$ are integers. On the $q^d$-dimensional Hilbert space
$\ell_2(\Z_q^d)$, with standard basis $\{\varepsilon_r\}_{r\in\Z_q^d}$, define
\begin{equation}\label{eq:rational-matrix-unitaries}
 V_j\varepsilon_r
 =\exp\left(2\pi\mathrm i\sum_{k<j}\theta_{jk}r_k\right)
       \varepsilon_{r+e_j},
 \qquad 1\leq j\leq d.
\end{equation}
Here $e_j$ denotes the $j$th standard basis vector of $\Z^d$.
The condition $q\theta_{jk}\in\Z$ ensures that
\eqref{eq:rational-matrix-unitaries} is independent of the choice of
representatives for $r_k\in\Z_q$. The operators $V_j$ are unitary, and direct calculation
gives
\[
 V_jV_k=e^{2\pi\mathrm i\theta_{jk}}V_kV_j.
\]
Consider the transference map
\begin{equation}\label{eq:rational-embedding}
 \pi:\T_\theta^d\longrightarrow
       L_\infty(\T^d)\,\overline\otimes\,\mathbb M_{q^d}(\C),\qquad
 \pi(U^n)(x)=e^{\mathrm i n\cdot x}V^n,
 \qquad \forall n\in\Z^d.
\end{equation}
It extends to a trace-preserving normal unital $*$-homomorphism, where $L_\infty(\T^d)\,\overline\otimes\,\mathbb M_{q^d}(\C)$ is equipped with the trace
$\int_{\T^d}\otimes\operatorname{tr}_{q^d}$, and $\operatorname{tr}_{q^d}$ is the normalized matrix trace.
Moreover, it intertwines the partial derivatives and Fourier
multipliers in the two settings:
\begin{equation}\label{eq:rational-multiplier-intertwining}
 \partial_{x_k}\pi(b)=\pi(\partial_kb),\qquad
 T_j^\T\pi(f)=\pi(T_jf).
\end{equation}
Thus
\[
 \pi([b,T_j]f)=[\pi(b),T_j^\T]\pi(f).
\]
By Lemma~\ref{lem:periodic-calderon} with $N=q^d$,
\begin{equation}\label{eq:rational-matrix-bound}
 \begin{aligned}
 \|[\pi(b),T_j^\T]\|_{B(L_2(\T^d;S_2(\C^{q^d})))}
 &\leq C_d\log(1+q^d)\sum_{k=1}^d\|\partial_kb\|_\infty\\
 &\leq C_d\log(1+q)\sum_{k=1}^d\|\partial_kb\|_\infty.
 \end{aligned}
\end{equation}
Restricting this estimate to the image of $\pi$ proves
\eqref{eq:rational-torus-bound}.
\subsection{Optimality}
For every integer $q\geq1$, put
\[
 \theta_q=\frac1q
 \begin{pmatrix}0&1\\-1&0\end{pmatrix}\oplus0_{d-2}.
\]
Now it suffices to show that there exists a self-adjoint $b_q\in C^\infty(\T_{\theta_q}^d)$ such that
\begin{equation}\label{eq:rational-torus-sharpness}
 \|[b_q,T_1]\|_{B(L_2(\T_{\theta_q}^d))}
 \geq c\log(1+q)\sum_{k=1}^d\|\partial_kb_q\|_\infty.
\end{equation}
Here $c>0$ is an absolute constant.

\begin{proof}[Proof of optimality]
We first take $d=2$ and
\[
 \theta_q=\frac1q\begin{pmatrix}0&1\\-1&0\end{pmatrix}.
\]
On $\ell_2(\Z_q)$ with standard basis $\{\varepsilon_r\}_{r\in\Z_q}$, put
\[
\mathcal V_1\varepsilon_r=e^{2\pi\mathrm ir/q}\varepsilon_r,\qquad
\mathcal V_2\varepsilon_r=\varepsilon_{r+1\bmod q}.
\]
Then $\mathcal V_1\mathcal V_2=e^{2\pi\mathrm i/q}\mathcal V_2\mathcal V_1$, and the same construction
as in \eqref{eq:rational-embedding} gives another transference map
\[
  \pi:\T_{\theta_q}^2\longrightarrow
 L_\infty(\T^2)\,\overline\otimes\,\mathbb M_q(\C),\qquad \pi(U^n)(x)=e^{\mathrm i n\cdot x}\mathcal V^n,\qquad \forall n\in\Z^2.
\]
This transference preserves the trace
$\int_{\T^2}\otimes\operatorname{tr}_q$, the derivatives, and the
Fourier multipliers in both settings.
Let
\[
 \mathcal H:=((2\pi/q)\Z/(2\pi\Z))^2,\qquad
 \beta_{(2\pi r/q,2\pi s/q)}:=\operatorname{Ad}(\mathcal V_1^s\mathcal V_2^{-r}).
\]
We can show that a smooth matrix-valued function $b$ belongs to the image of $\pi$ if and
only if
\begin{equation}\label{eq:rational-covariance}
 b(x+h)=\beta_h(b(x)),\qquad h\in\mathcal H.
\end{equation}
Indeed, the matrices $\mathcal V^{(r',s')}=\mathcal V_1^{r'}\mathcal V_2^{s'}$, $0\leq r',s'<q$, form a joint
eigenbasis for the action $\beta$, with
$\beta_h(\mathcal V^n)=e^{\mathrm i n\cdot h}\mathcal V^n$.
Thus \eqref{eq:rational-covariance} is equivalent to $\widehat b(n)=\lambda_n \mathcal V^n$ for some $\lambda_n\in\C$, and hence equivalent to $$b=\pi\left(\sum_{n\in\Z^2}\lambda_nU^n\right).$$ 

For $q=1,2,3$, take $b=(U_1+U_1^*)/2$.
Then
\[
 \|[b,T_1]\|_{B(L_2(\T_{\theta_q}^2))}\geq\|[b,T_1]1\|_2=\|b\|_2=1/\sqrt2,\qquad
 \sum_{k=1}^2\|\partial_kb\|_\infty=1,
\]
which gives
\eqref{eq:rational-torus-sharpness}.
Hence we may assume that $q\geq4$. Put $N=\lfloor q/2\rfloor$.
Let $D_N,E_N$ be the matrices of Proposition~\ref{prop:finite-obstruction}, and set
\[
 S=D_N\oplus0_{q-2N},\qquad A=E_N\oplus0_{q-2N}.
\]
Then it follows from the definitions of $D_N,E_N$ that $S$ has integer spectrum, $A=A^*$, and
\begin{equation}\label{eq:rational-sharp-matrices}
 0<\|[S,A]\|\leq\pi,\qquad
 \|[A,|S|]\|\geq\kappa_N.
\end{equation}
Choose a unit vector $v\in\ell_2(\Z_q)$ with
\begin{equation}\label{eq:rational-maximizing-vector}
 \|v\|=1,\qquad
 \|[A,|S|]v\|=\|[A,|S|]\|.
\end{equation}

Now we fix a unit vector $w\in\ell_2(\Z_q)$ and a ball $O\subset\T^2$ whose $\mathcal H$-translates have
disjoint closures. Take real smooth functions $\chi,\psi$ supported
in $O$, with $0\leq\chi\leq1$, $\psi\ne0$, and $\chi=1$ on
$\operatorname{supp}\psi$. For positive integers $M$ divisible by $q$,
put $E_M(x)=e^{\mathrm ix_1MS}$ and define
\begin{align*}
 B_M(x)&:=\sum_{h\in\mathcal H}\chi(x-h)
       \beta_h\bigl(E_M(x)AE_M(x)^*\bigr),\\
 F_{M,z}(x)&:=\sum_{h\in\mathcal H}\psi(x-h)
       \beta_h\bigl(E_M(x)z\otimes w^*\bigr),\qquad z\in\ell_2(\Z_q).
\end{align*}
Since $q\mid M$ and $S$ has integer spectrum,
$E_M(x+h)=E_M(x)$ for every $h\in\mathcal H$.
Hence both $B_M$ and $F_{M,z}$ satisfy
\eqref{eq:rational-covariance}, which implies the existence of smooth elements $b_{M,q},f_{M,z}$ with
$B_M=\pi(b_{M,q}),F_{M,z}=\pi(f_{M,z})$. 

Now we show that $b_{M,q}$ is the desired symbol. Note that we have $b_{M,q}=b_{M,q}^*$ and
\begin{equation}\label{eq:rational-sharp-derivatives}
 \begin{aligned}
     \sum_{k=1}^2\|\partial_kb_{M,q}\|_\infty
      =\sum_{k=1}^2\|\partial_{x_k}B_M\|_\infty
 &\le\sum_{k=1}^2\left\|\sum_{h\in\mathcal H}\partial_{x_k}\chi(x-h)
       \beta_h\bigl(E_M(x)AE_M(x)^*\bigr)\right\|_\infty\\
&\qquad+\left\|\sum_{h\in\mathcal H}\chi(x-h)
\beta_h\bigl(E_M(x)[\mathrm i MS,A]E_M(x)^*\bigr)\right\|_\infty\\
 &\leq \|A\|\sum_{k=1}^2\|\partial_{x_k}\chi\|_\infty+M\|[S,A]\|.
 \end{aligned}
\end{equation}
Moreover, $\chi\psi=\psi$ and $|\mathcal H|=q^2$ imply
\begin{equation}\label{eq:rational-test-identities}
 B_MF_{M,z}=F_{M,Az},\qquad
 \|F_{M,z}\|_2=\sqrt q\,\|\psi\|_2\|z\|.
\end{equation}
Using the fact that $m_1$ has Lipschitz constant at most $3$,
\begin{align*}
 \left\|T_1^\T(\psi g)-\psi T_1^\T g\right\|_2^2
 &=
 \sum_{k\in\Z^2}
 \left\|
 \sum_{n\in\Z^2}
 \bigl(m_1(k)-m_1(n)\bigr)
 \widehat\psi(k-n)\widehat g(n)
 \right\|_{L_2(\mathbb M_q(\C),\operatorname{tr}_q)}^2 \\
 &\leq
 9
 \left(
 \sum_{\ell\in\Z^2}
 |\ell|\,|\widehat\psi(\ell)|
 \right)^2
 \|g\|_2^2.
\end{align*}
Applying the preceding estimate with $\psi(\cdot-h)$ and $$g_h(x):=\beta_h\bigl(E_M(x)z\otimes w^*\bigr),$$ we have
\[
 \frac1{M}T_1^\T F_{M,z}
 =F_{M,|S|z}+\frac1M\mathcal E_{M,z},
 \qquad
 \|\mathcal E_{M,z}\|_2\leq C\|z\|,
\]
where $C$ depends only on the fixed $q,S,\psi$.
Applying the identity $B_MF_{M,z}=F_{M,Az}$ and this estimate with
$z=v$ and $z=Av$, we obtain
\begin{equation}\label{eq:rational-commutator-test}
 \frac1{M}[B_M,T_1^\T]F_{M,v}
 =F_{M,[A,|S|]v}+\frac1M\mathcal E_M,
\end{equation}
where
\begin{equation}\label{eq:rational-commutator-error}
 \mathcal E_M=B_M\mathcal E_{M,v}-\mathcal E_{M,Av},
 \qquad
 \|\mathcal E_M\|_2\leq2C\|A\|,
\end{equation}
since $\|B_M\|_\infty\leq\|A\|$.
Using the isometry $\pi$, the identity
$\|F_{M,z}\|_2=\sqrt q\,\|\psi\|_2\|z\|$, and the choice of $v$,
we have
\begin{eqnarray*}
 \|[b_{M,q},T_1]\|_{B(L_2(\T_{\theta_q}^2))}
 &\overset{\eqref{eq:rational-test-identities}}{\geq}&
 \frac{\|[b_{M,q},T_1]f_{M,v}\|_2}{\|f_{M,v}\|_2}
 \\[6pt]
 &\overset{\eqref{eq:rational-multiplier-intertwining}}{=}&
 \frac{\|[B_M,T_1^\T]F_{M,v}\|_2}{\|F_{M,v}\|_2}
 \\[6pt]
 &\overset{\eqref{eq:rational-commutator-test}}{=}&
 \frac{\|M F_{M,[A,|S|]v}+\mathcal E_M\|_2}
      {\|F_{M,v}\|_2}
 \\[6pt]
 &\overset{\eqref{eq:rational-commutator-error}}{\ge}&
 M\|[A,|S|]v\|_{\ell_2(\Z_q)}
 -\frac{2C\|A\|}{\sqrt q\,\|\psi\|_2}.
 \\[6pt]
 &\overset{\eqref{eq:rational-maximizing-vector}}{\ge}&
 M\|[A,|S|]\|
 -\frac{2C\|A\|}{\sqrt q\,\|\psi\|_2}.
\end{eqnarray*}
Since $q$, $A$, and $\psi\ne0$ are fixed, enlarging $C$ gives
\[
 \|[b_{M,q},T_1]\|_{B(L_2(\T_{\theta_q}^2))}
 \geq M\|[A,|S|]\|-C,
\]
where the constant is independent of $M$.
Combining this with \eqref{eq:rational-sharp-derivatives}
and letting $M\to\infty$ through multiples of $q$, we obtain
\[
 \liminf_{\substack{M\to\infty\\q\mid M}}
 \frac{\|[b_{M,q},T_1]\|_{B(L_2(\T_{\theta_q}^2))}}
      {\sum_{k=1}^2\|\partial_kb_{M,q}\|_\infty}
 \geq\frac{\|[A,|S|]\|}{\|[S,A]\|}
 \geq\frac{\kappa_N}{\pi}.
\]
Here $q$ and the cutoffs are fixed before $M$ tends to infinity.
Since $\kappa_N\gtrsim\log(N+1)$ and $N=\lfloor q/2\rfloor$,
we may choose a sufficiently large finite $M$ to obtain a lower bound
$c\log(1+q)$ for this ratio. Then $b_{M,q}$ is the desired symbol.

For the general case $d\geq 2$, it suffices to consider
\[
 b_q=b_{M,q}\otimes 1^{\otimes(d-2)},
\]
where $b_{M,q}$ is the two-dimensional symbol attaining the logarithmic lower bound.
\end{proof}

\begin{remark}
Theorem~\ref{thm:torus} also follows from the sharpness assertion
in Theorem~\ref{thm:torus-rational} by an approximation argument.
Set $J:=\begin{pmatrix}
    0&1\\ -1&0
\end{pmatrix}$ and
\[
 C(\alpha):=
 \sup_{0\ne b\in W^{1,\infty}(\T_{\alpha J}^2)}
 \frac{\|[b,T_1]\|_{B(L_2(\T_{\alpha J}^2))}}
      {\|b\|_{W^{1,\infty}}},
\]
The trace-preserving embedding
\[
 \Pi_k:\T_{k^2\alpha J}^2\longrightarrow\T_{\alpha J}^2,
 \qquad U\mapsto U^k,\quad V\mapsto V^k,
\]
scales both the derivatives and $T_1$ by $k$, and therefore
$C(k^2\alpha)\leq C(\alpha)$.
By \cite[Theorem~4.2]{Gao2018}, the function $\alpha\mapsto C(\alpha)$ is
lower semicontinuous.
Hence, for irrational $\alpha$ and every positive integer $q$, we may
choose a sequence $\{k_n\}$ such that $k_n^2\alpha\bmod 1\to 1/q$, which
implies that
\[
 C(\alpha)\geq\limsup_n C(k_n^2\alpha)\geq C(1/q)\gtrsim\log(1+q)
 \qquad(q\geq1),
\]
and therefore $C(\alpha)=\infty$. By the closed graph theorem, we show the existence of $b.$ The general case follows similarly and we omit the details.
\end{remark}

\section{Calder\'on commutators and the limits of \texorpdfstring{$T1$}{T1} testing}
\label{sec:t1}

The estimate in Lemma~\ref{lem:periodic-calderon} gives a
finite-dimensional version of the operator-valued Calder\'on theorem, with a logarithmic loss depending on matrix dimension. However, this estimate does not directly yield a bound for $B(\ell_2)$-valued symbols. On the other hand, the failure of an unrestricted operator-valued $T1$
theorem does not, by itself, settle the corresponding question for Calder\'on commutators, which possess additional structure. It is therefore natural to ask whether $b\in W^{1,\infty}(\R;B(\ell_2))$ still
implies that $\left[b,\left|\frac{\mathrm d}{\mathrm dx}\right|\right]$
is bounded on $L_2(\R;\ell_2)$.
Theorem~\ref{thm:operator-valued-Calderon} gives a negative answer. The proof also relies on the finite obstruction constructed in Section \ref{s2}, and we give it in the next subsection.

\subsection{An operator-valued first commutator with bounded \texorpdfstring{$T1$}{T1} tests}
We now prove Theorem~\ref{thm:operator-valued-Calderon} in the
one-dimensional case $d=1$. The higher-dimensional
case follows by a straightforward extension of the same argument, and
we omit the details. Let
\(
 b:\R\to B(\ell_2)
\).
We use the norm $$\|b\|_{W^{1,\infty}(\R;B(\ell_2))}:= \|b\|_{L^\infty(\R;B(\ell_2))}+\left\|\frac{\mathrm d}{\mathrm dx}b\right\|_{L^\infty(\R;B(\ell_2))}.$$ Consistently with the notation used in the quantum
setting, we define Calder\'on's first commutator by
\(
 \left[b,\left|\frac{\mathrm d}{\mathrm dx}\right|\right].
\)

\begin{proof}[Proof of Theorem \ref{thm:operator-valued-Calderon}]
Let $\mathsf D_n,\mathsf A_n\in\mathbb M_{2N_n}(\C)$ be the matrices
given by Corollary~\ref{cor:scaled-obstruction}. Recall that
\begin{equation}\label{eq:operator-block-estimates}
 \|\mathsf A_n\|\leq\frac{2^{-n}}6,
 \qquad
 \|[\mathsf D_n,\mathsf A_n]\|\leq2^{-n},
 \qquad
 \|[|\mathsf D_n|,\mathsf A_n]\|\geq n.
\end{equation}
Identify
\(
 \ell_2=\bigoplus_{n\geq1}\C^{2N_n},
\)
and define
\begin{equation}\label{eq:operator-b}
 \begin{aligned}
 b_n(x)
 &:=
 \bigoplus_{k=1}^n
 e^{\mathrm ix\mathsf D_k}\mathsf A_k
 e^{-\mathrm ix\mathsf D_k},\\
 b(x)
 &:=
 \bigoplus_{n\geq1}
 e^{\mathrm ix\mathsf D_n}\mathsf A_n
 e^{-\mathrm ix\mathsf D_n}.
 \end{aligned}
\end{equation}
By \eqref{eq:operator-block-estimates},
\[
 \sup_{x\in\R}\|b(x)-b_n(x)\|
 =
 \sup_{k>n}\|\mathsf A_k\|
 \longrightarrow0.
\]
Moreover,
\begin{equation}\label{eq:operator-b-derivative}
 \begin{aligned}
 \frac{\mathrm d}{\mathrm dx}b_n(x)
 &=
 \bigoplus_{k=1}^n
 \mathrm i e^{\mathrm ix\mathsf D_k}
 [\mathsf D_k,\mathsf A_k]
 e^{-\mathrm ix\mathsf D_k},\\
 \frac{\mathrm d}{\mathrm dx}b(x)
 &=
 \bigoplus_{n\geq1}
 \mathrm i e^{\mathrm ix\mathsf D_n}
 [\mathsf D_n,\mathsf A_n]
 e^{-\mathrm ix\mathsf D_n}.
 \end{aligned}
\end{equation}
Again by \eqref{eq:operator-block-estimates},
\[
 \sup_{x\in\R}
 \left\|
 \frac{\mathrm d}{\mathrm dx}b(x)
 -
 \frac{\mathrm d}{\mathrm dx}b_n(x)
 \right\|
 =
 \sup_{k>n}\|[\mathsf D_k,\mathsf A_k]\|
 \longrightarrow0.
\]
Consequently,
\(
 b\in W^{1,\infty}(\R;B(\ell_2)).
\)

We next compute $\left|\frac{\mathrm d}{\mathrm dx}\right|b$. Let
$G_{N_n}$, $D_{N_n}$, $E_{N_n}$, and $X_{N_n}$ be defined as in
Section~\ref{s2}. A direct calculation gives
\[
e^{\mathrm ixD_{N_n}}E_{N_n}e^{-\mathrm ixD_{N_n}}
=
\begin{pmatrix}
0 &
\bigl((G_{N_n})_{rs}e^{\mathrm ix(2^r+2^s)}\bigr)_{r,s=1}^{N_n}
\\[4pt]
\bigl((G_{N_n}^*)_{rs}e^{-\mathrm ix(2^r+2^s)}\bigr)_{r,s=1}^{N_n}
&
0
\end{pmatrix}.
\]
Since $2^r+2^s>0$, applying
$\left|\frac{\mathrm d}{\mathrm dx}\right|$ entrywise and using the
definition of $\mathsf D_n$ yields
\begin{equation}\label{eq:operator-Db}
\begin{aligned}
 \left|\frac{\mathrm d}{\mathrm dx}\right|b_n(x)
 &=
 \bigoplus_{k=1}^n
 e^{\mathrm ix\mathsf D_k}\mathsf B_k
 e^{-\mathrm ix\mathsf D_k},\\
 \left|\frac{\mathrm d}{\mathrm dx}\right|b(x)
 &=
 \bigoplus_{n\geq1}
 e^{\mathrm ix\mathsf D_n}\mathsf B_n
 e^{-\mathrm ix\mathsf D_n}.
\end{aligned}
\end{equation}
where we set $$\mathsf B_n
:=
 \frac{2^{-n}}{\pi}
 \begin{pmatrix}
 0&X_{N_n}\\
 X_{N_n}^*&0
 \end{pmatrix}.$$
Recall from the proof of Proposition~\ref{prop:finite-obstruction} that
$\|X_{N_n}\|\leq\pi$.
Therefore  
\[
 \sup_{x\in\R}
 \left\|
 \left|\frac{\mathrm d}{\mathrm dx}\right|b(x)
 -
 \left|\frac{\mathrm d}{\mathrm dx}\right|b_n(x)
 \right\|
 =\sup_{k>n}\|\mathsf B_k\|\leq2^{-(n+1)}
 \longrightarrow0.
\]
and consequently
\(
 \left|\frac{\mathrm d}{\mathrm dx}\right|b
 \in L_\infty(\R;B(\ell_2)).
\)

To show unboundedness, fix $\varphi\in\mathcal S(\R)$ with
$\|\varphi\|_2=1$ and $\operatorname{supp}\widehat\varphi\subset[-1,1]$.
Choose a unit vector $v_n\in\C^{2N_n}$ such that
\[
 \|[\mathsf A_n,|\mathsf D_n|]v_n\|_{\C^{2N_n}}
 =\|[\mathsf A_n,|\mathsf D_n|]\|.
\]
Define
$f_n\in\mathcal S(\R;\ell_2)$ by
\[
 f_n(x)
 :=
 0\oplus\cdots\oplus
 \underbrace{e^{\mathrm ix\mathsf D_n}\varphi(x)v_n}_{n\text{-th block}}
 \oplus\cdots .
\]
Since
$e^{\mathrm ix\mathsf D_n}$ is unitary,
\[
 \|f_n\|_{L_2(\R;\ell_2)}
 =
 \|\varphi\|_2\|v_n\|
 =1.
\] Taking the Fourier transform gives
\begin{equation}\label{eq:operator-fiber}
 \mathcal F_x\left(
 e^{-\mathrm ix\mathsf D_n}
 \left[b,\left|\frac{\mathrm d}{\mathrm dx}\right|\right]f_n
 \right)(\xi)
 =
 [\mathsf A_n,|\xi\mathbf{Id}_{2N_n}+\mathsf D_n|]\widehat\varphi(\xi)v_n.
\end{equation}
The elementary estimate
$\||\xi\mathbf{Id}_{2N_n}+\mathsf D_n|-|\mathsf D_n|\|\leq|\xi|$ yields
\[
 \left\|
 \left[b,\left|\frac{\mathrm d}{\mathrm dx}\right|\right]f_n
 \right\|_2
 \geq
 \|[\mathsf A_n,|\mathsf D_n|]v_n\|
 -2\|\mathsf A_n\|
 \geq
 n-\frac{2^{-n}}3.
\]
Thus $\left[b,\left|\frac{\mathrm d}{\mathrm dx}\right|\right]$ is
unbounded.

The kernel estimates and the weak boundedness calculation are the
same as those in the proof of Lemma~\ref{lem:periodic-calderon}.
These estimates depend only on the Lipschitz norm of the symbol and
are independent of the matrix dimension. Applying them to the finite
block sums in \eqref{eq:operator-b} and passing to the limit using
\eqref{eq:operator-b-derivative} verifies
\eqref{eq:size}, \eqref{eq:smooth} with $d=1$ and $\alpha=1$, and
\eqref{eq:operator-weak-boundedness}.
Since $b=b^*$, we have
\begin{equation}\label{nsymm}
    \left[b,\left|\frac{\mathrm d}{\mathrm dx}\right|\right]1=-\left|\frac{\mathrm d}{\mathrm dx}\right|b,
 \qquad
 \left[b,\left|\frac{\mathrm d}{\mathrm dx}\right|\right]^*1=\left|\frac{\mathrm d}{\mathrm dx}\right|b
\end{equation}
in the distributional sense. By \eqref{eq:operator-Db}, both testing functions
belong to $L^\infty(\R;B(\ell_2))$. Since
$L^\infty(\R;B(\ell_2))\subset\operatorname{BMO}^{rc}(\R;B(\ell_2))$,
this also verifies \eqref{eq:operator-T1-cancellation}.
\end{proof}

\begin{remark}\label{rem1}
Theorem~\ref{thm:operator-valued-Calderon} shows that an operator-valued
$T1$ operator need not be $L_2$-bounded, even when
$T1,T^*1\in L_\infty(\R;B(\ell_2))$.
\end{remark}
\begin{remark}
    Theorem~\ref{thm:operator-valued-Calderon} does not contradict the
main result of Hong, Liu, and Mei~\cite{HongLiuMei2020}, since \eqref{nsymm} implies that $ \left[b,\left|\frac{\mathrm d}{\mathrm dx}\right|\right]$ does not satisfy the symmetric condition $(T1)^*=T^*1.$
\end{remark}

% \begin{corollary}\label{cor:failure-naive-operator-T1}
% Standard operator-norm Calder\'on--Zygmund kernel estimates, weak
% boundedness, and the tests $T1,T^*1\in\mathrm{BMO}(B(H))$ do not
% imply boundedness on $L_2(\R;H)$ for arbitrary operator-valued
% singular integrals. The conclusion remains false if the tests are
% strengthened to $T1,T^*1\in L_\infty(\R;B(H))$.
% \end{corollary}

% \begin{proof}
% All of the stated hypotheses hold for the operator in
% Theorem~\ref{thm:operator-valued-Calderon}, whereas its $L_2$ norm
% is infinite.
% \end{proof}

\subsection{Quantum BMO and a prospective \texorpdfstring{$T1$}{T1} principle}
In this subsection, we introduce the BMO spaces on quantum Euclidean
spaces and discuss the corresponding $T1$ theory. The canonical
translation action $\alpha_x$ on $\R_\theta^d$ is determined by
\[
 \alpha_x(\lambda_\theta(\xi))
 =
 e^{\mathrm i x\cdot\xi}\lambda_\theta(\xi),
 \qquad x,\xi\in\R^d.
\]
For $a\in\R_\theta^d$, the quantum BMO seminorm is defined as
\begin{align}
 \|a\|_{\mathrm{BMO}^c(\R_\theta^d)}
 &=
 \sup_Q
 \left\|
 \frac1{|Q|}
 \int_Q
 \bigl(\alpha_x(a)-(\alpha(a))_Q\bigr)^*
 \bigl(\alpha_x(a)-(\alpha(a))_Q\bigr)
 \,\dd x
 \right\|^{1/2},
 \label{eq:quantum-BMO-c}\\
 \|a\|_{\mathrm{BMO}^r(\R_\theta^d)}
 &=
 \|a^*\|_{\mathrm{BMO}^c(\R_\theta^d)},
 \qquad
 \|a\|_{\mathrm{BMO}^{rc}(\R_\theta^d)}
 =
 \max\bigl\{
   \|a\|_{\mathrm{BMO}^c(\R_\theta^d)},
   \|a\|_{\mathrm{BMO}^r(\R_\theta^d)}
 \bigr\}.
 \label{eq:quantum-BMO-r}
\end{align}
Here the supremum is taken over all cubes $Q\subset\R^d$, and $$(\alpha(a))_Q
 =
 \frac1{|Q|}
 \int_Q\alpha_x(a)\,\dd x.$$ 
One may equivalently describe these spaces in terms of the quantum
heat semigroup, using the corresponding row and column
square-oscillation norms. These BMO spaces, together with a theory of
singular integral operators on quantum Euclidean spaces, were developed
by Gonz\'alez-P\'erez, Junge, and Parcet~\cite{GJP}. Their framework
allows for nonconvolution kernels and separates the a priori
$L_2$-boundedness assumption from the kernel conditions used to derive
endpoint and $L_p$ estimates. In particular, the quantum Riesz
transforms satisfy the endpoint estimate
\begin{equation}\label{eq:quantum-Riesz-BMO}
 \|R_j a\|_{\mathrm{BMO}^{rc}(\R_\theta^d)}
 \leq C_d\|a\|_\infty.
\end{equation}

Theorem~\ref{thm:euclidean} imposes a concrete restriction on any quantum $T1$
principle. For $b=b^*\in W^{1,\infty}(\R_\theta^d)$, we have, in the
distributional sense,
\begin{equation}\label{eq:quantum-commutator-tests}
[b,T_j]1=-T_jb=R_j\partial_jb,
 \qquad
 [b,T_j]^*1=T_jb.
\end{equation}
These identities are obtained in exactly the same way as in the
operator-valued setting. Therefore, by \eqref{eq:quantum-commutator-tests} and
\eqref{eq:quantum-Riesz-BMO},
\begin{equation}\label{eq:quantum-commutator-BMO-tests}
 \|[b,T_j]1\|_{\mathrm{BMO}^{rc}}
 +
 \|[b,T_j]^*1\|_{\mathrm{BMO}^{rc}}
 \leq
 2C_d\|\partial_j b\|_\infty.
\end{equation}
Hence the symbols constructed in Theorem~\ref{thm:euclidean} satisfy
the natural quantum BMO testing conditions, even though the
corresponding commutators are unbounded on $L_2(\R_\theta^d)$.

We recall the kernel framework of
\cite[Sections~1.2.1 and~2.3.3]{GJP} in our Fourier normalization.
For a quantum kernel
$k\in\left(
\mathcal S(\R_\theta^d)\widehat\otimes
\mathcal S(\R_\theta^d)^{\mathrm{op}}
\right)'$,
the associated singular integral operator is formally given by
\begin{equation}\label{eq:gjp-kernel-operator}
 T_k a=(\mathrm{id}\otimes\tau_\theta)
       \bigl(k\bullet(1\otimes a)\bigr),
\end{equation}
with singular kernel $k$ interpreted by duality and
$(a\otimes b)\bullet(c\otimes e)=ac\otimes eb$.
The normal $*$-homomorphism
\[
 \pi_\theta:L_\infty(\R^d)\longrightarrow
 \R_\theta^d\overline\otimes(\R_\theta^d)^{\mathrm{op}},\qquad
 \pi_\theta(e^{\mathrm i\xi\cdot(\cdot)})
 =\lambda_\theta(\xi)\otimes\lambda_\theta(\xi)^*
\]
defines the quantum distance $d_\theta=\pi_\theta(|\cdot|)$
by spectral calculus. At $\theta=0$, we have
$\pi_0(f)(x,y)=f(x-y)$ and $d_0(x,y)=|x-y|$.

Let $ \mathcal L(\mathbb F_d)$ denote the group von Neumann algebra of the free group $\mathbb F_d.$ For a free semicircular family $(s_j)_{j=1}^d\subset \mathcal L(\mathbb F_d)$, define the quantum gradient by
\[
 \nabla_\theta a=\sum_{j=1}^d s_j\otimes\partial_j a.
\]
\cite[Theorem A]{GJP} states that $T_k:L_p(\R_\theta^d)\to L_p(\R_\theta^d),1<p<\infty$ is bounded if $T_k:L_2(\R_\theta^d)\to L_2(\R_\theta^d)$
is bounded, $k$ satisfies the quantum smoothness condition
\begin{equation}\label{eq:gjp-kernel-smoothness}
 \begin{aligned}
 \bigl\|d_\theta^\rho\bullet
     (\nabla_\theta\otimes\mathrm{id})k
     \bullet d_\theta^{d+1-\rho}\bigr\|
 +
 \bigl\|d_\theta^\rho\bullet
     (\mathrm{id}\otimes\nabla_\theta)k
     \bullet d_\theta^{d+1-\rho}\bigr\|
 \leq C
 \end{aligned}
\end{equation}
for $\rho\in\left\{0,\frac{d+1}{2},d+1\right\},$ and  $(\nabla_\theta\otimes\mathrm{id})k,(\mathrm{id}\otimes\nabla_\theta)k$ are affiliated with $\mathcal L(\mathbb F_d)\overline\otimes\R_\theta^d
\overline\otimes(\R_\theta^d)^{\mathrm{op}}$.
At $\theta=0$,
these bounds reduce to
\[
 |\nabla_x k(x,y)|+|\nabla_y k(x,y)|
 \lesssim |x-y|^{-d-1}.
\]
For the prospective quantum $T1$ principle below, we propose the following size condition for the kernel $k$:
\begin{equation}\label{eq:gjp-kernel-size}
    \begin{aligned}
 \bigl\|d_\theta^\rho\bullet
     k
     \bullet d_\theta^{d-\rho}\bigr\|
 \leq C
 \end{aligned}
\end{equation}
for $\rho\in\left\{0,\frac{d}{2},d\right\}.$ At $\theta=0$,
these bounds reduce to
\[
 | k(x,y)|
 \lesssim |x-y|^{-d}.
\]

A formal quantum analogue of the David--Journ\'e theorem would ask
whether
\begin{equation*}\label{eq:formal-quantum-T1}
 \left.
 \begin{gathered}
 T\text{ satisfies the quantum kernel estimates \eqref{eq:gjp-kernel-size} and \eqref{eq:gjp-kernel-smoothness}},\\
 T\text{ satisfies an appropriate quantum weak boundedness condition},\\
 T1,T^*1\in\mathrm{BMO}^{rc}(\R_\theta^d)
 \end{gathered}
 \right\}
 \quad\Longrightarrow\quad
 T\in B(L_2(\R_\theta^d)).
\end{equation*}
This is only a formal scheme rather than a theorem, since the correct
formulations of the first two conditions are themselves part of the
problem. In particular, the classical pointwise kernel conditions
cannot be transferred directly to quantum Euclidean spaces without
first identifying an appropriate noncommutative substitute. We do not know whether there are more natural or intrinsic quantum analogues of the size and smoothness conditions than those in \eqref{eq:gjp-kernel-size} and \eqref{eq:gjp-kernel-smoothness}.
A further difficulty in formulating a complete quantum $T1$
theorem is the absence of a natural quantum analogue of the weak
boundedness property. More precisely, there is no evident intrinsic
notion of spatial support for smooth test functions on quantum
Euclidean spaces, which makes it difficult to formulate the usual
local bump-function testing condition. This leads to the following
question.

\begin{question}
Which concrete quantum kernel estimates, BMO conditions, and weak boundedness conditions characterize
$L_2$-boundedness and reduce to the David--Journ\'e criterion when $\theta=0$? In particular, can the missing conditions be formulated in terms of the boundedness of the associated left and right paraproducts, or alternatively through a natural symmetry assumption?
\end{question}

If suitable quantum kernel estimates and local testing conditions can
be combined with cancellation data controlling the associated
paraproducts, they may lead to a quantum $T1$ theorem, possibly under a
symmetry condition as in Hong, Liu, and Mei~\cite{HongLiuMei2020}.
A possible approach is through transference or the singular integral
framework developed in \cite{GJP}.

\medskip

\noindent\textbf{Acknowledgements.} This work is partially supported by the National Natural Science Foundation of China (Grant Nos. 12325105, W2441002, W2611005).

\noindent\textbf{AI Statement.}
The authors acknowledge the use of ChatGPT for language
polishing, LaTeX editing, and exploratory mathematical discussions during the development and
preparation of this manuscript. Some ideas used in the proof-development process emerged during
interactions with GPT-5.6 Sol. Suggestions from these AI interactions were subsequently examined, reformulated, incorporated into the manuscript, and independently verified by
the authors. The authors assume full responsibility for all mathematical content in the final manuscript.

\end{document}